\colorlet{darkblue}{blue!50!black}
\newtheorem{theorem}{Theorem}[section]
\newtheorem{lemma}[theorem]{Lemma}
\newtheorem{remark}[theorem]{Remark}
\let\originalleft\left
\let\originalright\right
\renewcommand{\left}{\mathopen{}\mathclose\bgroup\originalleft}
\renewcommand{\right}{\aftergroup\egroup\originalright}
\renewcommand{\d}{\/\mathrm{d}\/}
\def\w{\textbf{W}^{\varepsilon}_{{\theta}^{\varepsilon}}}
\def\A{\mathrm{A}}
\def\C{\mathrm{C}}
\def\f{\boldsymbol{f}}
\def\D{\mathrm{D}}
\def\y{\boldsymbol{y}}
\def\k{\boldsymbol{k}}
\def\g{\boldsymbol{g}}
\def\h{\boldsymbol{h}}
\def\z{\boldsymbol{z}}
\def\v{\boldsymbol{v}}
\def\V{\mathbb{v}}
\def\w{\boldsymbol{w}}
\def\no{\nonumber}
\def\V{\mathbb{V}}
\def\wi{\widetilde}
\def\u{\mathrm{U}}
\def\vr{\boldsymbol{\varphi}}
\def\u{\boldsymbol{u}}
\def\H{\mathbb{H}}
\newcommand{\R}{\mathbb{R}}
\renewcommand{\d}{\/\mathrm{d}\/}
\newcommand{\Addresses}{{% additional braces for segregating \footnotesize
		\footnote{
			%	\footnotesize
			
			\noindent \textsuperscript{1,2}Department of Mathematics, Indian Institute of Technology Roorkee-IIT Roorkee,
			Haridwar Highway, Roorkee, Uttarakhand 247667, INDIA.\par\nopagebreak
			\noindent 	\textit{e-mail:} \texttt{Pardeep Kumar: pkumar3@ma.iitr.ac.in.}
			
			\noindent  \textit{e-mail:} \texttt{Manil T. Mohan: manilfma@iitr.ac.in, maniltmohan@gmail.com.}
			
			\noindent \textsuperscript{*}Corresponding author.

			\textit{Key words:} Convective Brinkman-Forchheimer equations, Locally distributed control, Carleman estimates, Null controllability,  Local exact controllability.
			
			Mathematics Subject Classification (2020): Primary 35Q35; Secondary 93B05, 93B07, 93C10, 93C20.

}}}
\begin{document}
	
	%	\linenumbers
	
	\title[Local exact controllability of the CBF equations]{Local exact controllability to the trajectories of the convective Brinkman-Forchheimer equations
		\Addresses}
	\author[P. Kumar and M. T. Mohan ]{Pardeep Kumar\textsuperscript{1} and Manil T. Mohan\textsuperscript{2*}}

	\maketitle
	
	\begin{abstract}
		In this article, we discuss the local exact controllability to trajectories of the following  convective Brinkman-Forchheimer (CBF) equations (or damped Navier-Stokes equations) defined	in a bounded domain $\Omega
		\subset\mathbb{R}^d$ ($d=2,3$) with smooth boundary:
		\begin{align*}
			\frac{\partial\u}{\partial t}-\mu \Delta\boldsymbol{u}+(\boldsymbol{u}\cdot\nabla)\boldsymbol{u}+\alpha\boldsymbol{u}+\beta|\boldsymbol{u}|^{2}\boldsymbol{u}+\nabla p=\boldsymbol{f}+\boldsymbol{\vartheta}, \ \ \  \nabla\cdot\boldsymbol{u}=0,
		\end{align*}
		where the control $\boldsymbol{\vartheta}$ is distributed in a subdomain $\omega \subset \Omega$, and the parameters $\alpha,\beta,\mu>0$ are constants. We first present global Carleman estimates and observability inequality for the  adjoint problem of a linearized version of CBF equations by using a global Carleman estimate for the Stokes system. This allows us to obtain its null controllability at any time $T>0$. We then use the inverse mapping theorem to deduce local results concerning the exact controllability to the trajectories of CBF equations.
	\end{abstract}
	\section{Introduction}\label{sec1}\setcounter{equation}{0}

	Let $\Omega \subset \R^d$ $(d=2,3)$ be a bounded domain with a smooth boundary $\partial \Omega$ (for example of class $\C^2$). Let $\omega \subset \Omega$ be a non-empty, small, open subset, and let $T>0$. We will use the notations $Q:=\Omega\times(0,T)$, $Q_{\omega}:=\omega\times(0,T)$, and $\Sigma:=\partial\Omega\times(0,T)$, and denote by $\boldsymbol{n}(x),$ the outward unit normal to $\Omega$ at the point $x\in \partial{\Omega}$. The convective Brinkman-Forchheimer (CBF) equations describe the motion of an incompressible viscous fluid through a rigid, homogeneous, isotropic, porous medium (cf. \cite{AB}). Let us consider the following controlled  CBF equations: 
	\begin{equation}\label{a1}
		\left\{
		\begin{aligned}
			\frac{\partial\u}{\partial t}-\mu \Delta\u+(\u\cdot\nabla)\u+\alpha\u+\beta|\u|^{2}\u+\nabla p&=\f+\boldsymbol{\vartheta} \mathds{1}_{\omega}, \ \text{ in }  Q, \\ \nabla\cdot\u&=0, \ \text{ in }  Q, \\ 
			\u&=\mathbf{0}, \ \text{ on }  \Sigma, \\
			\u(0)&=\u_0,  \text{ in }  \Omega.
		\end{aligned}
		\right.
	\end{equation}
	Here $\u(x,t) \in \R^d$ represents the velocity vector field of the fluid, $p(x,t)\in\R$ is the pressure in the fluid,  $\boldsymbol{f}(x,t)\in\R^d$ is a given  external forcing acting on the fluid, $\boldsymbol{\vartheta}(x,t) \in \R^d$ is a control distributed in some arbitrary fixed subdomain $\omega$ of the physical domain $\Omega$, $\u_0(x)$ is a given initial vector field, and $\mathds{1}_{\omega}$ is the characteristics function of the set $\omega$:
	\begin{align*}
		\mathds{1}_{\omega}(x)= \left\{\begin{array}{cc}1,&\text{for } x \in \omega,\\
			0,& \ \ \text{ for } x \in \Omega \backslash \omega.\end{array}\right.
	\end{align*}
	The positive constant $\mu$ denotes the \emph{Brinkman coefficient} (effective viscosity), while  $\alpha$ and $\beta$ stand for  the \emph{Darcy} (permeability of porous medium) and \emph{Forchheimer} (proportional to the porosity of the material) coefficients, respectively. Setting $\alpha=\beta=0$ yields the classical $d$-dimensional controlled Navier-Stokes equations  (NSE). This accounts to refer the problem \eqref{a1} as a controlled damped NSE also. The global solvability results of the uncontrolled problem \eqref{a1} (that is, $\boldsymbol{\vartheta}=\mathbf{0}$)  is discussed in \cite{CLF,MTM}, etc.

	\subsection{Statement of the problem} 
	The main objective of this work is to discuss the local exact controllability to trajectories of the CBF equations \eqref{a1}.  	To state the problem precisely and present the main result, we require some function spaces which are given below (\hspace {-0.2mm}\cite{RT}):
	\begin{align*}
		\H&=\{\u\in \mathrm{L}^2(\Omega)^d: \ \nabla\cdot\u=0  \ \ \text{in}  \ \Omega  \ \ \text{and} \ \ \u\cdot\boldsymbol{n}=0  \ \ \text{on} \ {\partial\Omega}\},\\
		\V&=\{\u\in\mathrm{H}_0^1(\Omega)^d: \ \nabla\cdot\u=0 \ \ \text{in}  \ \Omega\},\\
		\mathrm{L}^2_0(\Omega)&=\bigg\{q \in\mathrm{L}^2(\Omega): \ \int_{\Omega} q(x) \d x=0 \bigg\},
	%	\mathrm{L}^2_0(Q)&=\bigg\{q \in\mathrm{L}^2(0,T;\mathrm{L}^2(\Omega)): \ \int_{\Omega} q(x,t) \d x =0, \ \  \text{for}   \ a. e. \ t \in (0,T)\bigg\},
	\end{align*}
where $\boldsymbol{n}$ is the unit outward drawn normal to the boundary and $\u\cdot\boldsymbol{n}\big|_{\partial\Omega}=0 $ should be understood in the sense of trace (\hspace {-0.2mm}\cite[Section 1.3, Chapter 1]{RT}). The following notations are also used repeatedly in the article:  $\mathrm{L}^p(Q)^d:=\mathrm{L}^p(0,T;\mathrm{L}^p(\Omega)^d)$, for $p \in [1,\infty]$,  $\mathrm{L}^2(Q_{\omega})^d:=\mathrm{L}^2(0,T;\mathrm{L}^2(\omega)^d)$, $\mathrm{L}^2(Q)^{d^2}:=\mathrm{L}^2(0,T;\mathrm{L}^2(\Omega)^{d^2})$, $\mathrm{L}^2_0(Q):=\mathrm{L}^2(0,T;\mathrm{L}^2_0(\Omega))$ and $\mathrm{L}^2(Q):=\mathrm{L}^2(0,T;\mathrm{L}^2(\Omega))$. 	Let $(\cdot,\cdot)$ denote the inner product in the Hilbert space $\H$ and $\langle\cdot,\cdot\rangle$ represent the duality pairing between a Banach space $\mathbb{X}$ and its dual $\mathbb{X}'$.  Note that $\H$ can be identified with its own dual $\H'$. We will denote a generic constant (usually depending on $\Omega$ and $\omega$) by $C$ throughout this article.

	We cannot expect the exact controllability for  CBF equations with an arbitrary target function due to the dissipative and irreversible characteristics of the system (\hspace {-0.2mm}\cite{RTR}). Therefore, we consider an ``ideal" trajectory $(\wi{\u}, \wi{p})$ of the following uncontrolled CBF equations:
	\begin{equation}\label{a2}
		\left\{
		\begin{aligned}
			\frac{\partial \wi{\u}}{\partial t}-\mu \Delta	\wi{\u}+(	\wi{\u}\cdot\nabla)	\wi{\u}+\alpha	\wi{\u}+\beta|	\wi{\u}|^{2}	\wi{\u}+\nabla 	\wi{p}&=\f, \ \text{ in } Q, \\ \nabla\cdot	\wi{\u}&=0, \  \text{ in }  Q, \\
			\wi{\u}&=\mathbf{0}, \  \text{ on }  \Sigma, \\ 
			\wi{\u}(0)&=\wi{\u}_0,  \text{ in }  \Omega,
		\end{aligned}
		\right.
	\end{equation}
	with the same right-hand side  $\f$  as in \eqref{a1}. We  additionally suppose that $(\wi{\u},\wi{p})$ satisfies the following regularity property:
	\begin{align}\label{a2a}
		\wi{\u} \in \mathrm{L}^\infty(Q)^d.
	\end{align}
	
	Let us now define the notion of local exact controllability to trajectories of the CBF equations \eqref{a1}. The task is to find a control $\boldsymbol{\vartheta}$ such that at least one solution of \eqref{a1} verifies that
	\begin{align*}
		\u(\cdot,T)=\wi{\u}(\cdot,T), \ \text{ in }  \Omega.
	\end{align*}
	If we are able to obtain such a control, we can switch it off  after time $T$ and the system will follow the ``ideal" trajectory $(\wi{\u},\wi{p})$ of \eqref{a2}.
	
	More precisely, we say that the system \eqref{a1} is \emph{locally exactly controllable to  trajectories} if for  a suitable trajectory $(\wi{\u},\wi{p}),$ solution of \eqref{a2},  there  exists  a $\delta >0$ such that  if 
	\begin{align*}
		\|\u_0-\wi{\u}_0\|_{\mathrm{L}^{2d-2}(\Omega)^d} \leq \delta,
	\end{align*}
	we can find a control $\boldsymbol{\vartheta}$ such that the corresponding solution to \eqref{a1} satisfies:
	\begin{align}\label{a2b}
		\u(\cdot,T)=\wi{\u}(\cdot,T), \  \text { in }  \Omega.
	\end{align}
	\subsection{Main result and application}
	The main result of the present article concerns the local exact controllability to trajectories of the CBF equations \eqref{a1}. We now state our main result in the following theorem:
	\begin{theorem}\label{thmm}
		Let us assume that $\omega$ is a non-empty open subset of $\Omega$ and $T>0$. We suppose that the solution $(\wi{\u},\wi{p})$ of the  system \eqref{a2} satisfies \eqref{a2a}. Then, there exists $\delta >0$ such that for any $\u_0 \in \H \cap \mathrm{L}^4(\Omega)^d$ satisfying  \begin{align*}
			\| \u_0-\wi{\u}_0\|_{\mathrm{L}^{2d-2}(\Omega)^d} \leq \delta,
		\end{align*} 
		there exists a control $\boldsymbol{\vartheta} \in \mathrm{L}^2(Q_\omega)^d$ and a solution $(\u,p)$ of the system  \eqref{a1} that satisfies \eqref{a2b}.
	\end{theorem}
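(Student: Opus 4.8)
The plan is to recast the local exact controllability to trajectories as a \emph{local null controllability} problem for the difference of the two states, to solve the linearised version of this problem by a global Carleman estimate together with a duality argument, and finally to return to the full nonlinear problem via the inverse mapping theorem, exactly along the lines announced in the abstract.

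First I would set $\w := \u - \wi{\u}$ and $\pi := p - \wi{p}$. Subtracting \eqref{a2} from \eqref{a1} and expanding the two nonlinearities about $\wi{\u}$, namely $(\u\cdot\nabla)\u-(\wi{\u}\cdot\nabla)\wi{\u}=(\wi{\u}\cdot\nabla)\w+(\w\cdot\nabla)\wi{\u}+(\w\cdot\nabla)\w$ and $|\u|^2\u-|\wi{\u}|^2\wi{\u}=|\wi{\u}|^2\w+2(\wi{\u}\cdot\w)\wi{\u}+\mathcal{N}(\w)$, where $\mathcal{N}(\w)$ gathers the quadratic and cubic remainders in $\w$, the pair $(\w,\pi)$ solves a system forced by $\boldsymbol{\vartheta}\mathds{1}_\omega$ with initial value $\w(0)=\u_0-\wi{\u}_0$. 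Under this substitution the target \eqref{a2b} becomes exactly $\w(\cdot,T)=\mathbf{0}$, so Theorem~\ref{thmm} is equivalent to steering $\w$ to rest at time $T$ with a control supported in $\omega$, provided $\|\u_0-\wi{\u}_0\|_{\mathrm{L}^{2d-2}(\Omega)^d}$ is small.

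Next I would freeze the terms that are linear in $\w$ and study the linearised system
\[
\partial_t\w-\mu\Delta\w+(\wi{\u}\cdot\nabla)\w+(\w\cdot\nabla)\wi{\u}+\alpha\w+\beta\bigl(|\wi{\u}|^2\w+2(\wi{\u}\cdot\w)\wi{\u}\bigr)+\nabla\pi=\g+\boldsymbol{\vartheta}\mathds{1}_\omega,\qquad \nabla\cdot\w=0,
\]
with homogeneous Dirichlet data. The analytic core is a global Carleman estimate for its adjoint: denoting the adjoint variable by $\vr$ (which solves a backward Stokes-type system), and choosing the usual Fursikov--Imanuvilov weights $e^{-s\alpha(x,t)}$ and $\xi(x,t)$ built from a function that vanishes on $\partial\Omega$ and has no critical point in $\overline{\Omega}\setminus\omega$, I would move the transport term $(\wi{\u}\cdot\nabla)\vr$, the Darcy term, and the two linearised Forchheimer terms to the right-hand side and invoke the known Carleman estimate for the adjoint Stokes system cited in the abstract. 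Thanks to the boundedness \eqref{a2a} of $\wi{\u}$, these lower-order contributions can be absorbed into the dominant terms by taking the parameters $\lambda$ and $s$ large; this is precisely where the cubic nonlinearity makes the $\mathrm{L}^\infty$ hypothesis \eqref{a2a} indispensable. From the Carleman estimate I would deduce an observability inequality for $\vr$, and by the standard penalised least-squares duality argument this yields null controllability of the linear system together with weighted a priori bounds on $(\w,\pi,\boldsymbol{\vartheta})$; the weights are chosen so that $\w$ decays strongly enough as $t\uparrow T$ to force $\w(\cdot,T)=\mathbf{0}$, and so that $\w$ lies in a weighted space continuously embedded in $\mathrm{L}^\infty(0,T;\mathrm{L}^{2d-2}(\Omega)^d)\cap\mathrm{L}^2(0,T;\V)$, which is the regularity needed to handle $\mathcal{N}(\w)$.

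Finally I would carry out the inverse mapping step. On the weighted Banach spaces identified above I define a map $\mathcal{A}$ sending a state--control triple $(\w,\pi,\boldsymbol{\vartheta})$ to the pair consisting of the \emph{full} (nonlinear) residual of the difference system and the initial value $\w(0)$; one then checks that $\mathcal{A}$ is $C^1$ near the origin and that its Fr\'echet derivative at $\mathbf{0}$ is exactly the linear controllability operator whose surjectivity was obtained through the Carleman estimate and duality. Lyusternik's inverse mapping theorem then furnishes a $\delta>0$ such that, whenever $\|\u_0-\wi{\u}_0\|_{\mathrm{L}^{2d-2}(\Omega)^d}\le\delta$, the equation $\mathcal{A}(\w,\pi,\boldsymbol{\vartheta})=(\mathbf{0},\,\u_0-\wi{\u}_0)$ admits a solution, that is, a control $\boldsymbol{\vartheta}\in\mathrm{L}^2(Q_\omega)^d$ and an associated state with $\w(\cdot,T)=\mathbf{0}$, which is \eqref{a2b}. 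The hard part, I expect, is not any single estimate but the \emph{compatibility} of the two halves of the scheme: the weights dictated by the observability inequality must simultaneously keep the linear operator surjective and make the quadratic convective remainder and the trilinear Forchheimer remainder $\mathcal{N}(\w)$ map the source space boundedly into itself with small norm near the origin, so that $\mathcal{A}$ is genuinely $C^1$ with surjective derivative at $\mathbf{0}$; calibrating the weighted functional setting so that both demands hold at once---with the cubic term as the binding constraint---is where the real work lies.
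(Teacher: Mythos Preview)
Your proposal is correct and follows essentially the same route as the paper: reduction to local null controllability of the difference $\w=\u-\wi{\u}$, a Carleman estimate for the adjoint of the linearised operator (obtained from the Stokes Carleman estimate by absorbing the $\wi{\u}$-dependent lower-order terms via \eqref{a2a}), the resulting observability inequality and penalised duality argument to get null controllability with exponentially weighted bounds, and finally the Lyusternik inverse mapping theorem applied to the map $\mathcal{A}$ whose derivative at the origin is the surjective linear controllability operator. Your closing remark is also on target: the paper's main technical labour is precisely the calibration of the weighted spaces $\mathcal{E}_d$ (which for $d=3$ require the extra regularity $e^{-\frac{3s}{4}\hat{\psi}}\v\in\mathrm{L}^4(0,T;\mathrm{L}^{12})\cap\mathrm{L}^6(Q)$, obtained by transposition against Giga--Sohr estimates) so that the bilinear convective and trilinear Forchheimer remainders are continuous into the target space $\mathcal{G}_d$.
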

	%	An inverse mapping argument is used to prove this result. This tool has been successfully used in this context several times (e.g., \cite{CGIP,EC,FIm,FIm 1,AVIm,PGAO,SG,PG,Im3,Im1,Puel}, etc.).
Motivated 	by  the ideas developed in  \cite{CGIP,Im1,Puel}, we prove  Theorem \ref{thmm} in  Section \ref{sec5} and the proof is based on an application of the inverse mapping theorem.
	\begin{remark}
	Under the	regularity hypotheses described in \eqref{a2a}, J.-P. Puel \cite{Puel} established local exact controllability to the trajectories of the Navier-Stokes system. Therefore, it will be proven in this article that the presence of a cubic nonlinearity requires no additional regularity on the trajectories than the one described in \eqref{a2a}.
	\end{remark}
	Let us provide an application of the above controllability result (for more details, see \cite{AVIm,Puel}, etc).
	\vskip 0.2cm
	\noindent
	\textbf{Stabilizability of unstable stationary flows:} Let $({\u}^\dag,{p}^\dag)$ be a stationary solution of the following CBF equations in a bounded domain $\Omega \subset \R^d$:
	\begin{equation}\label{a21}
		\left\{
		\begin{aligned}
			&-\mu \Delta	{\u}^\dag+(	{\u}^\dag \cdot\nabla)	{\u}^\dag+\alpha	{\u}^\dag+\beta|{\u}^\dag|^{2}	{\u}^\dag+\nabla 	{p}^\dag=\f(x), \ \text{ in } \Omega, \\ &\nabla\cdot	{\u}^\dag=0, \  \text{ in }  \Omega,  \ \ \ \
			{\u}^\dag=\mathbf{0}, \  \text{ on }  \partial \Omega.
		\end{aligned}
		\right.
	\end{equation}
The global solvability results (${\u}^\dag\in\D(\A)$, where $\A$ is the Stokes operator) of the system \eqref{a21} can be obtained from \cite{MTM1}. 	Suppose that this stationary solution is unstable. 
%Of course in that case we have ${\u}^{\dag}_{0}={\u}^\dag$. 
Thus, for each $\delta >0,$ there is an initial condition $\u_0 \in \{\u :\|\u-{\u}^\dag\|_{\mathrm{L}^{2d-2}(\Omega)^d} < \delta\}$ such that the solution $(\u,p)$ of the CBF equations \eqref{a1} with $\f(x,t) \equiv \f(x)$ and $\boldsymbol{\vartheta}(x,t)\equiv \mathbf{0}$  would diverge from $({\u}^\dag, {p}^\dag)$ as $t \rightarrow \infty$ and 
	\begin{align*}
		\|\u(\cdot,t)-{\u}^\dag\|_{\mathrm{L}^4(\Omega)^d} \nrightarrow 0 \ \ \text{as} \ \ t \rightarrow \infty.
	\end{align*}
	One can stabilize the unstable stationary solution  $({\u}^\dag,{p}^\dag)$ by replacing the first equation of \eqref{a1} by the distributed control $\boldsymbol{\vartheta}$. Indeed, by Theorem \ref{thmm}, for any initial
	condition $\u_0 \in \H \cap\mathrm{L}^{2d-2}(\Omega)^d,$ there is a  control $\boldsymbol{\vartheta}$ supported in  $\omega \times (0,T)$, such that the solution  $(\u,p)$ of the CBF equations \eqref{a1} satisfies the  condition 
	$$ \u(x,T)\equiv {\u}^\dag(x).$$
	This says that we are able to perform a strong stabilization of the unstable stationary flows $({\u}^\dag, {p}^\dag)$ of solution of the system \eqref{a21} by an interior control.
	\subsection{Strategy and literature review}
	Several steps will be required to prove the main  Theorem \ref{thmm}. Let us now sketch the strategy which we  employ to solve our problem.
	\begin{itemize}
		\item
		First of all if we write
		$$\v=\u-\wi{\u}, \ \ \ q=p-\wi{p}, \ \ \v_0=\u_0-\wi{\u}_0,$$ we obtain a new controlled system:
		\begin{equation}\label{a3}
			\left\{
			\begin{aligned}
				\mathcal{L}\v+\nabla q&=\mathcal{N} (\v)+\boldsymbol{\vartheta} \mathds{1}_{\omega}, \ \text{ in } Q, \\ \nabla\cdot\v&=0, \ \text{ in }  Q,\\
				\v&=\mathbf{0}, \ \text{ on } \Sigma, \\ 	\v(0)&=\v_0,  \text{ in }  \Omega,
			\end{aligned}
			\right.
		\end{equation}
		where
		\begin{align}
			\mathcal{L}\v &= \frac{\partial\v}{\partial t}-\mu \Delta\v+(\wi{\u} \cdot \nabla )\v + (\v \cdot \nabla)\wi{\u} +\alpha\v+\beta|\wi{\u}|^{2}\v+2 \beta (\v \cdot \wi{\u}) \wi{\u}, \label{a31} \\
			\mathcal{N} (\v)&=-(\v \cdot \nabla)\v-\beta |\v|^2(\v+\wi{\u})-2\beta (\v \cdot \wi{\u}) \v.\label{a32}
		\end{align}
		Consequently, it is easy to see that the local exact controllability of the trajectories of the system \eqref{a1} is equivalent to the local null controllability of the system \eqref{a3}, that is, we want to find the control $\boldsymbol{\vartheta}$ such that at time $T$, we have 
	$
			\v(\cdot,T)=\mathbf{0}.
	$
		More precisely, we say that the system \eqref{a3} is \emph{null controllable} if, for any initial data $\v_0 \in \H \cap \mathrm{L}^{2d-2}(\Omega)^d$, there exists a control $\boldsymbol{\vartheta}$ such that the associated solution to the system \eqref{a3} satisfies:
		\begin{align}\label{a4}
			\v(\cdot,T)=\mathbf{0}, \ \ \ \text{in } \Omega.
		\end{align}
		Accordingly, we will focus on proving the local null controllability of the nonlinear system \eqref{a3}.
		\item In order to prove this local result, we employ an inverse mapping argument, developed in \cite{CGIP,Puel}, etc.,  in two steps.
		
		Given an appropriate choice of tensor $\g$, a function $\h$ (in a functional class that will be specified later), and given an initial data $\v_0 \in \H$, we prove the null controllability of the following linearized system:
		\begin{equation}\label{a5}
			\left\{
			\begin{aligned}
				\mathcal{L}\v +\nabla q &=\nabla \cdot \g+\h+\boldsymbol{\vartheta} \mathds{1}_{\omega}, \ \text{ in } Q,  \\ \nabla\cdot\v&=0, \ \text{ in }  Q, \\
				\v&=\mathbf{0},  \ \text{ on } \Sigma, \\
				\v(0)&=\v_0,  \text{ in }  \Omega,
			\end{aligned}
			\right.
		\end{equation}
		where the operator $\mathcal{L}$ is defined in \eqref{a31}.
		
		The null controllability of \eqref{a5} is based on the attainment of the so-called \emph{observability inequality} for a backward system associated with \eqref{a5}. Let us introduce the adjoint state $(\boldsymbol{\varphi}, \pi)$ associated with \eqref{a5} that satisfies the system:
		\begin{equation}\label{a12}
			\left\{
			\begin{aligned}
				\mathcal{L}^{*} \vr+\nabla \pi&=\mathbf{0} , \ \text{ in }  Q,\\   \nabla\cdot\vr&=0,  \ \text{ in } Q, 
				\\ \vr&=\mathbf{0},  \ \text{ on }  \Sigma, \\
				\vr(T)&=\frac{1}{\varepsilon}\v_{\varepsilon}(T),  \ \text{ in }  \Omega.
			\end{aligned}
			\right.
		\end{equation}
		Here $\mathcal{L}^*$ is the adjoint operator of $\mathcal{L}$ define by
		\begin{align}\label{a121}
			\mathcal{L}^*\vr = -\frac{\partial\vr}{\partial t}-\mu \Delta\vr-(D \vr) \wi{\u}+\alpha\vr+\beta|\wi{\u}|^{2}\vr+2 \beta (\vr \cdot \wi{\u}) \wi{\u},
		\end{align}
 $\v_\varepsilon(\cdot,\boldsymbol{\vartheta}_{\varepsilon})$ is the solution of the optimal control problem (cf. subsection \ref{subsec4.1})
		\begin{align*}
			\min_{\boldsymbol{\vartheta} \in \mathrm{L}^2(Q_{\omega})^d} \bigg(\frac{1}{2\varepsilon}\|\v(T)\|_{\H}^2+\frac{1}{2}\int_{Q_{\omega}}|\boldsymbol{\vartheta}|^2\d x\d t\bigg),
		\end{align*}
	such that $\v(\cdot,\boldsymbol{\vartheta})$ is the solution of  the system \eqref{a5} associated to $\boldsymbol{\vartheta}$, and $D \vr$ stands for the symmetrized gradient $D \vr=\nabla \vr+(\nabla \vr)^{\top}$. 
		
		At present, the most powerful tool to prove the null controllability of the linearized system \eqref{a5} is establishing global Carleman estimates for the solutions of its adjoint system \eqref{a12}. We first deduce a Carleman estimate for the adjoint system \eqref{a12} and with the help of this Carleman estimate, we derive the observability inequality (see Theorem \ref{thmo}) of \eqref{a12}, which is useful for proving the null controllability of the system \eqref{a5}.
		
		\item Next, we  return to the nonlinear problem and then apply the inverse mapping theorem to extend the control result from the  linearized system \eqref{a5} to the nonlinear system \eqref{a3}, thus obtaining the desired result (proof of main Theorem \ref{thmm}).
	\end{itemize}
	
	This two-step strategy has been successfully applied to other nonlinear systems such as the incompressible Navier-Stokes equations (see \cite{CGIP,FIm,Im3,Im1,Puel}, etc.),  the Boussinesq system \cite{AVIm,SG}, etc., the one-dimensional Kuramoto-Sivashinsky equation \cite{EC}, the Swift-Hohenberg equation \cite{PGAO}, and the Cahn-Hilliard equation \cite{PG}, etc. and  references therein.
	
	%	The local exact controllability to the trajectories of the Navier-Stokes system, the Boussinesq system, the one-dimensional Kuramoto-Sivashinsky equation,  the Swift-Hohenberg equation, and the Cahn-Hilliard equation, etc. when distributed controls supported in small sets are established in \cite{CGIP}-\cite{EC}, \cite{FIm}-\cite{PGAO}, \cite{SG}-\cite{PG}, \cite{Im2}-\cite{Im1}, and \cite{Puel} and the references therein.
	The controllability problems for incompressible fluids have been the subject of much research in recent years.  There are several results concerning the controllability properties of the  incompressible NSE by Coron \cite{JMC,JMC1}, Coron and Fursikov  \cite{JMC2}, Fursikov \cite{Avf}, Fursikov and Imanuilov  \cite{FYIm,FYIm1,FIm, FIm 1,AVIm}, Imanuilov  \cite{Im2,Im21,Im3,Im1}, Fernández-Cara \cite{FC}, Fernández-Cara et. al.  \cite{CGIP,EFC1}, Guerrero \cite{SG1},  etc. and references therein. Several papers have been investigated on the controllability properties of the Boussinesq system by Fursikov and Imanuilov \cite{FIm 2,AVIm}, Imanuilov  \cite{Im21}, Fernández-Cara et. al. \cite{EFC1}, Guerrero \cite{SG}, Burgos, Guerrero and Puel \cite{MGB},  etc. and references therein.
%	\iffalse
%	There are several results concerning the controllability properties of the  incompressible Navier-Stokes equations (NSE) by  Coron, Fursikov, Imanuilov, Fern\'andez-Cara, Guerrero, Puel, Fabre in \cite{BES,FC,CGIP,EFC,EFC1,NC,JMC,JMC1,JMC2,CF,Avf,FYIm1,FIm,FIm 1,AVIm,SG1,SG2,Im2,Im3,Im1,Puel,AS}, or for the Boussinesq system in \cite{EFC1,FIm 2,AVIm,SG}. 
%	\fi
	The above works explore the null controllability, approximate controllability, exact controllability to the trajectories using distributed or boundary control, and also controlling the system with a reduced number of controls.
	
	In 1998, O. Imanuvilov established a result of local exact controllability to trajectories for NSE with classical Dirichlet boundary conditions, under rather strong regularity assumptions on the trajectory in \cite{Im3}. This study was improved in \cite{Im1} and furthermore, this result was extended by Fernández-Cara et. al. \cite{CGIP} and Puel  \cite{Puel} with less regularity on the trajectory.
%	\iffalse
%	Fursikov and Imanuvilov in \cite{FIm} proved the first local exact controllability result for the two-dimensional NSE in a bounded domain around regular trajectories for boundary conditions on the normal velocity and on the curl. 
%	The exact null-controllability problem for the NSE in two dimensions and three dimension was proved in \cite{FYIm1} and \cite{Avf}, respectively. The local exact controllability of the two-dimensional Navier-Stokes system in a bounded domain for the case in which the control function is concentrated on the whole of the boundary or on some part of it, and also for the case of a locally distributed control is established in \cite{FIm 1}. The local exact controllability of the two-dimensional Navier-Stokes system and Boussinesq equations with Navier slip boundary conditions has been established in \cite{Im2} and \cite{Im21}, respectively. 
%	
%	The local exact controllability problem for the Boussinesq equations in which the control function is the Dirichlet boundary condition of the velocity and temperature vector field of fluid flow was investigated by Fursikov and Imanuvilov in \cite{FIm 2}. The local exact controllability of the Navier-Stokes and Boussinesq equations with control acting on the boundary of the domains is proved in \cite{AVIm}. Imanuvilov studied the local exact controllability of the NSE with a locally distributed control and with slip boundary conditions in \cite{Im3,Im1}, and this result was extended by Fernández-Cara, Guerrero, Imanuvilov and Puel in \cite{CGIP}.
%	\fi
	
	Guerrero  \cite{SG1} considered the local exact controllability to the trajectories of NSE with nonlinear Navier-slip boundary conditions, with the control being supported in a small set. The local exact controllability to the trajectories of the  Boussinesq system is studied in \cite{SG}. The local exact controllability to the trajectories for the $N$-dimensional Navier-Stokes and Boussinesq systems with $N-1$ scalar controls is demonstrated by Fernández-Cara et. al.  \cite{EFC1}. They also obtained  global null controllability results for some (truncated) approximations of NSE. The authors  in \cite{MGB} proved  local exact controllability to the trajectories of  Boussinesq systems when the control is supported in a small set and acting on the divergence equation. For an extensive study on the controllability problems for NSE and related models, the interested readers are referred to see \cite{BES,MGB,CGIP,SG,SG1,Im1,Puel}, etc. and references therein. The aim of this article is to prove local exact controllability to the trajectories of the CBF equations \eqref{a1}. 
	%In contrast to the results obtained in \cite{CGIP,Puel}, etc. for NSE, for the CBF equations \eqref{a1} with quadratic and cubic non-linearities, the results are true. 
	To the best of our knowledge, there are no existing results in the literature on the exact controllability of CBF equations, and this work appears to be the first one to discuss local exact controllability to the trajectories of the CBF equations \eqref{a1}.

	\subsection{Organization of the article}
	The rest of the article is structured as follows: In the next section, we  provide the well-posedness result (Theorem \ref{thmwell}) for the linearized CBF equations \eqref{a5}. In section \ref{sec3}, we first discuss the necessary weight functions needed to obtain the main results of this article. In the same section, we  derive the required Carleman estimates (see \eqref{o23}), and observability inequality (Theorem \ref{thmo}) for the adjoint system \eqref{a12} associated to the linearized system \eqref{a5} by using the global Carleman estimates of the Stokes system. In Section \ref{sec4}, we prove the null controllability of the linearized system \eqref{a5} (Theorem \ref{thmn1}). Furthermore, we also show that the solutions and controls decay exponentially (Theorem \ref{thmE}) which is helpful to examine the local exact controllability to the trajectories of the CBF equations \eqref{a1}. In the final section, firstly, we define some functional classes and deduce some useful regularity results that are required to handle the nonlinear terms (see subsection \ref{sub5.2}). Finally, we establish the local null controllability of the nonlinear problem  \eqref{a3}  by an application of the inverse mapping theorem which leads to the required results.

	\section{Well-posedness of the Linearized CBF Equations}\label{sec2}\setcounter{equation}{0}
	%This section begins by introducing the function spaces and standard notations that will be used throughout the paper.
	In this section, we provide the well-posedness result for the linearized CBF equations \eqref{a5}, which is crucial in obtaining the controllability result for the CBF equations.
	\subsection{Well-posedness} Let us prove the existence, uniqueness, and stability results for  the linearized system \eqref{a5}. To do this, we first derive formal  energy estimates for the solution of the problem \eqref{a5}. The following theorem establishes the well-posedness of  the system \eqref{a5}.
	\begin{theorem}\label{thmwell}
		Let  $\wi{\u} \in \mathrm{L}^\infty(Q)^d$, $\v_0 \in \H$, $\g \in \mathrm{L}^2(Q)^{d^2}$,  $\h \in \mathrm{L}^2(0,T;\mathrm{H}^{-1}(\Omega)^d)$, and $\boldsymbol{\vartheta} \in \mathrm{L}^2(Q_{\omega})^d$ be given. Then, there exists a unique weak solution $(\v,q)$ to the system \eqref{a5} satisfying  
		\begin{align*}
			\v \in \C([0,T];\H) \cap \mathrm{L}^2(0,T;\V), \ \ \ \text{and} \ \ \ q \in \mathrm{L}^2_0(Q),
		\end{align*}
		and there exists a constant $C>0$ such that
		\begin{align}\label{a7}
			&	\|\v\|_{\C([0,T];\H)} + \|\nabla\v\|_{\mathrm{L}^2(0,T;\H)} +\| q\|_{\mathrm{L}^2(Q)} \no\\&\leq C\big(\|\v_0\|_{\H}+\|\g \|_{ \mathrm{L}^2(Q)^{d^2}}+\|\h \|_{ \mathrm{L}^2(0,T;\mathrm{H}^{-1}(\Omega)^d)}+\|\boldsymbol{\vartheta}\|_{\mathrm{L}^2(Q_{\omega})^d}\big),
		\end{align}
		and
		\begin{align}\label{a71}
			&\|\v_1-\v_2\|_{\C([0,T];\H)}+\|\nabla(\v_1-\v_2)\|_{\mathrm{L}^2(0,T;\H)}+\| q_1-q_2\|_{\mathrm{L}^2(Q)} \no\\& \leq
			C	\bigg\{\|\v_1(0)-\v_2(0)\|_{\H} + \|\g_1-\g_2\|_{\mathrm{L}^2(Q)^{d^2}}  + \|\h_1-\h_2\|_{\mathrm{L}^2(0,T;\mathrm{H}^{-1}(\Omega)^d)} +
			\|\boldsymbol{\vartheta}_1-\boldsymbol{\vartheta}_2\|_{\mathrm{L}^2(Q_{\omega})^{d}}  \bigg\},
		\end{align}
		where $C$ depends on the input data,  $\mu,\alpha,\beta,T$, and $\Omega$.
	\end{theorem}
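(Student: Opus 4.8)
The plan is to construct a solution by a Faedo--Galerkin approximation, obtain uniform a priori bounds yielding \eqref{a7}, pass to the limit, recover the pressure, and finally deduce uniqueness and \eqref{a71} from the linearity of \eqref{a5}. For the Galerkin scheme I would take the orthonormal basis $\{\boldsymbol{e}_k\}_{k\ge 1}$ of $\H$ formed by the eigenfunctions of the Stokes operator $\A$ (so that $\boldsymbol{e}_k\in\D(\A)\subset\V$), seek $\v^m(t)=\sum_{k=1}^{m}c_k^m(t)\boldsymbol{e}_k$ solving the Galerkin projection of \eqref{a5}, and note that since $\mathcal{L}$ is linear in $\v$ with coefficients bounded in terms of $\|\wi\u\|_{\mathrm{L}^\infty(Q)^d}$, the projected system is a linear system of ODEs with bounded measurable coefficients, hence admits a unique solution on all of $[0,T]$ once the a priori bound below is in hand.

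The central step is the energy estimate. Testing the projected equation with $\v^m$, the pressure gradient integrates against the divergence-free field to zero and the time derivative produces $\tfrac12\tfrac{\mathrm{d}}{\mathrm{d}t}\|\v^m\|_{\H}^2$, while the Stokes term gives $\mu\|\nabla\v^m\|_{\H}^2$. The convective term $((\wi\u\cdot\nabla)\v^m,\v^m)$ vanishes because $\wi\u$ is divergence free, whereas the delicate term $((\v^m\cdot\nabla)\wi\u,\v^m)$ must be integrated by parts (again using $\nabla\cdot\v^m=0$) to transfer the derivative onto $\wi\u$, giving $-((\v^m\cdot\nabla)\v^m,\wi\u)$, which is bounded by $\|\wi\u\|_{\mathrm{L}^\infty(Q)^d}\|\v^m\|_{\H}\|\nabla\v^m\|_{\H}$ and absorbed into the viscous term by Young's inequality; this is the one place where the $\mathrm{L}^\infty$ regularity \eqref{a2a} of the trajectory is essential. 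The damping contributions $\beta(|\wi\u|^{2}\v^m,\v^m)=\beta\int_{\Omega}|\wi\u|^{2}|\v^m|^{2}\d x$ and $2\beta((\v^m\cdot\wi\u)\wi\u,\v^m)=2\beta\int_{\Omega}(\v^m\cdot\wi\u)^{2}\d x$ are nonnegative and in any case bounded by $C\beta\|\wi\u\|_{\mathrm{L}^\infty(Q)^d}^{2}\|\v^m\|_{\H}^{2}$, and the Darcy term yields $\alpha\|\v^m\|_{\H}^{2}$. On the right-hand side, $\langle\nabla\cdot\g,\v^m\rangle=-(\g,\nabla\v^m)$, $\langle\h,\v^m\rangle$ (estimated via $\|\v^m\|_{\mathrm{H}^1_0}\le C\|\nabla\v^m\|_{\H}$), and $(\boldsymbol{\vartheta}\mathds{1}_\omega,\v^m)$ are each treated by Cauchy--Schwarz and Young's inequality, distributing the $\|\nabla\v^m\|_{\H}$ factors into the viscous term. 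Gr\"onwall's inequality then gives a bound on $\sup_{t\in[0,T]}\|\v^m(t)\|_{\H}^{2}+\int_0^T\|\nabla\v^m\|_{\H}^{2}\d t$ in terms of the data, uniform in $m$.

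Next I would pass to the limit. The uniform bound gives $\v^m\rightharpoonup\v$ weakly-$*$ in $\mathrm{L}^\infty(0,T;\H)$ and weakly in $\mathrm{L}^2(0,T;\V)$; since every term of $\mathcal{L}$ is linear, passing to the limit in the weak formulation is immediate. From the equation one reads off $\partial_t\v\in\mathrm{L}^2(0,T;\V')$, so the Lions--Magenes embedding $\mathrm{L}^2(0,T;\V)\cap \mathrm{H}^1(0,T;\V')\hookrightarrow\C([0,T];\H)$ gives the continuity in time and makes sense of the initial condition $\v(0)=\v_0$. The pressure is recovered by de Rham's theorem: since $\mathcal{L}\v-\nabla\cdot\g-\h-\boldsymbol{\vartheta}\mathds{1}_\omega$ annihilates divergence-free test fields, it equals $\nabla q$ for some $q\in\mathrm{L}^2_0(Q)$, and the inf-sup (LBB) inequality together with the bounds already established on $\v$ yields $\|q\|_{\mathrm{L}^2(Q)}\le C\|\nabla q\|_{\mathrm{L}^2(0,T;\mathrm{H}^{-1}(\Omega)^d)}$ controlled by the data, completing \eqref{a7}.

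Finally, uniqueness and the Lipschitz bound \eqref{a71} follow from linearity: the difference $(\v_1-\v_2,\,q_1-q_2)$ solves \eqref{a5} with data $(\v_1(0)-\v_2(0),\,\g_1-\g_2,\,\h_1-\h_2,\,\boldsymbol{\vartheta}_1-\boldsymbol{\vartheta}_2)$, so applying \eqref{a7} to the difference gives \eqref{a71}, and taking all data equal forces $\v_1=\v_2$ and $q_1=q_2$. I expect the main technical obstacle to be the simultaneous control, within a single Gr\"onwall argument, of the coupled damping terms and of the convective term $((\v\cdot\nabla)\wi\u,\v)$ that relies on integration by parts and the $\mathrm{L}^\infty$ bound on $\wi\u$, together with extracting the pressure estimate in $\mathrm{L}^2_0(Q)$ with a constant depending only on the stated data.
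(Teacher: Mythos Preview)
Your proposal is correct and follows essentially the same route as the paper: Faedo--Galerkin approximation, energy estimate via testing with $\v$ (using $((\wi\u\cdot\nabla)\v,\v)=0$ and the integration-by-parts identity $((\v\cdot\nabla)\wi\u,\v)=-((\v\cdot\nabla)\v,\wi\u)$), Gr\"onwall, passage to the limit, $\partial_t\v\in\mathrm{L}^2(0,T;\V')$ giving $\v\in\C([0,T];\H)$, pressure recovery, and uniqueness/stability by linearity. The only cosmetic differences are that the paper bounds the convective term via $\|\wi\u\|_{\mathrm{L}^4}$ and Gagliardo--Nirenberg rather than $\|\wi\u\|_{\mathrm{L}^\infty}$ directly, and recovers the pressure by taking the divergence and inverting the Laplacian (Cattabriga regularity) rather than invoking de~Rham and the inf--sup inequality.
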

	\begin{proof}
		\textbf{Existence:} The following calculations can be made rigorous by using a Faedo-Galerkin approximation technique. As the system \eqref{a5}  is linear, weak convergence of a Faedo-Galerkin approximated subsequence  is enough to pass to the limit. 	Taking the inner product with $\v(\cdot)$ to the first equation in \eqref{a5} and using the fact that $((\wi{\u} \cdot \nabla )\v,\v)=0$ and $(\nabla q, \v)=0$, we obtain
		\begin{align}\label{a6}
			&	\frac{1}{2}\frac{\d}{\d t}\|\v(t)\|_{\H}^2+\mu \|\nabla \v(t)\|_{\H}^2+\alpha \|\v(t)\|_{\H}^2 + \beta\||\wi{\u}||\v|\|_{\H}^2+2\beta\|\wi{\u} \cdot\v\|_{\H}^2\nonumber\\&\leq (\nabla \cdot \g,\v)+\langle\h,\v\rangle+(\boldsymbol{\vartheta}\mathds{1}_{\omega},\v)-((\v \cdot \nabla)\wi{\u},\v),
		\end{align}
		for a.e. $t\in[0,T]$, where we have performed the integration by parts over $\Omega$. We estimate each term on the right-hand side of \eqref{a6} separately by employing H\"older's and Young's inequalities as follows:
		\begin{align*}
			|(\nabla \cdot \g,\v)| &\leq \|\g\|_{\mathrm{L}^2(\Omega)^{d^2}} \|\nabla \v\|_{\H} \leq \frac{3}{2\mu}\|\g\|_{\mathrm{L}^2(\Omega)^{d^2}}^2 +\frac{\mu}{6}\|\nabla\v\|_{\H}^2, \\
			|\langle\h,\v\rangle| &\leq \|\h\|_{\mathrm{H}^{-1}(\Omega)^d} \|\v\|_{\V} \leq \frac{3}{2 \mu}\|\h\|_{\mathrm{H}^{-1}(\Omega)^d}^2 +\frac{\mu}{6}\|\v\|_{\V}^2, \\
			|(\boldsymbol{\vartheta}\mathds{1}_{\omega},\v)|&\leq \|\boldsymbol{\vartheta}\|_{\mathrm{L}^2(\omega)^{d}} \|\v\|_{\H} \leq \frac{1}{2\alpha}\|\boldsymbol{\vartheta}\|_{\mathrm{L}^2(\omega)^{d}}^2 +\frac{\alpha}{2}\|\v\|_{\H}^2, \\
			|((\v \cdot \nabla)\wi{\u},\v)| &=|((\v \cdot \nabla)\v,\wi{\u})| \leq \|\nabla\v\|_{\H}\|\v\|_{\mathrm{L}^{4}(\Omega)^{d}}\|\wi{\u}\|_{\mathrm{L}^{4}(\Omega)^{d}} \\& \leq C \|\nabla\v\|_{\H}^{1+\frac{d}{4}}\|\v\|_{\H}^{1-\frac{d}{4}}\|\wi{\u}\|_{\mathrm{L}^{4}(\Omega)^{d}} \leq C\|\wi{\u}\|_{\mathrm{L}^{4}(\Omega)^{d}}^{\frac{8}{4-d}}\|\v\|_{\H}^2+\frac{\mu}{6}\|\nabla\v\|_{\H}^2.
		\end{align*} 
		Substituting the above estimates  in \eqref{a6},  and integrating the resulting estimate from $0$ to $t$,   we obtain
		\begin{align}\label{a61}
			&\|\v(t)\|_{\H}^2+\mu \int_0^t\|\nabla\v(s)\|_{\H}^2\d  s +2 \beta\int_0^t\||\wi{\u}(s)||\v(s)|\|_{\H}^2\d s +4\beta\int_0^t\|\wi{\u}(s) \cdot\v(s)\|_{\H}^2\d s\no\\& \leq\|\v_0\|_{\H}^2 + \frac{3}{\mu}\int_0^t\|\g(s)\|_{\mathrm{L}^2(\Omega)^{d^2}}^2 \d s+ \frac{3}{\mu}\int_0^t\|\h(s)\|_{\mathrm{H}^{-1}(\Omega)^{d}}^2 \d s \no\\& \quad+
			\frac{1}{\alpha}\int_0^t\|\boldsymbol{\vartheta}(s)\|_{\mathrm{L}^2(\omega)^{d}}^2 \d s +C \int_0^t\|\wi{\u}(s)\|_{\mathrm{L}^{4}(\Omega)^{d}}^{\frac{8}{4-d}}\|\v(s)\|_{\H}^2 \d s,
		\end{align} 
		for all $t \in [0,T]$. An application of Gronwall's inequality in \eqref{a61} yields
		\begin{align*}
			\|\v(t)\|_{\H}^2   & \leq \bigg(
			\|\v_0\|_{\H}^2 + \frac{3}{\mu}\int_0^T\|\g(t)\|_{\mathrm{L}^2(\Omega)^{d^2}}^2 \d t+ \frac{3}{\mu}\int_0^T\|\h(t)\|_{\mathrm{H}^{-1}(\Omega)^{d}}^2 \d t 	\no\\& \quad+
			\frac{1}{\alpha}\int_0^T\|\boldsymbol{\vartheta}(t)\|_{\mathrm{L}^2(\omega)^{d}}^2 \d t \bigg)
			\times \exp\bigg\{C \int_0^T\|\wi{\u}(t)\|_{\mathrm{L}^{4}(\Omega)^{d}}^{\frac{8}{4-d}} \d t \bigg\},
		\end{align*}
		for all $t \in [0,T]$. Thus, from \eqref{a61}, it is immediate that
		\begin{align}\label{a62}
			&\sup_{t\in[0,T]}	\|\v(t)\|_{\H}^2 +\int_0^T\|\nabla\v(t)\|_{\H}^2\d t+\int_0^T\|\wi{\u}(t) \cdot\v(t)\|_{\H}^2\d t+\int_0^T\||\wi{\u}(t)||\v(t)|\|_{\H}^2\d t
			\nonumber\\& \leq C\bigg(
			\|\v_0\|_{\H}^2 + \|\g\|_{\mathrm{L}^2(Q)^{d^2}}^2 + \|\h\|_{\mathrm{L}^2(0,T;\mathrm{H}^{-1}(\Omega)^{d})}^2 +
			\|\boldsymbol{\vartheta}\|_{\mathrm{L}^2(Q_{\omega})^{d}}^2  \bigg).
		\end{align}
		For $\Phi \in \mathrm{L}^2(0,T;\V)$, from the first equation in \eqref{a5}, we obtain
		\begin{align*}
			|\langle\v_t,\Phi\rangle|&= |\langle \nabla \cdot \g+\h+\boldsymbol{\vartheta}\mathds{1}_{\omega}+\mu \Delta \v-(\v \cdot \nabla)\wi{\u}-(\wi{\u} \cdot \nabla) \v\\& \quad-\alpha \v -\beta|\wi{\u}|^{2}\v-2 \beta (\v \cdot \wi{\u}) \wi{\u}, \Phi \rangle| 
			\\& \leq \big(\|\nabla \cdot \g\|_{\mathrm{H}^{-1}(\Omega)^{d}}+\|\h\|_{\mathrm{H}^{-1}(\Omega)^{d}}+\|\boldsymbol{\vartheta}\mathds{1}_{\omega}\|_{\mathrm{H}^{-1}(\Omega)^{d}}+\mu \|\Delta \v \|_{\V'}+\alpha \|\v\|_{\V'} \\&\quad+3\beta \||\wi{\u}|^{2}|\v | \|_{\V'} \big) \|\Phi\|_{\V}+| \langle (\v \cdot \nabla)\Phi,\wi{\u} \rangle| +| \langle (\wi{\u} \cdot \nabla)\Phi,{\v} \rangle| 
			\\& \leq C\big(\|\nabla \cdot \g\|_{\mathrm{H}^{-1}(\Omega)^{d}}+\|\h\|_{\mathrm{H}^{-1}(\Omega)^{d}}+\|\boldsymbol{\vartheta}\mathds{1}_{\omega}\|_{\mathrm{H}^{-1}(\Omega)^{d}}+\| \v \|_{\V}\\&\quad+ \|\v\|_{\H} + \||\wi{\u}|^{2}|\v | \|_{\mathrm{L}^{\frac{4}{3}}(\Omega)^{d}} \big) \|\Phi\|_{\V}+C\|\wi{\u}\|_{\mathrm{L}^{4}(\Omega)^{d}} \|\v\|_{\mathrm{L}^{4}(\Omega)^{d}}\|\Phi\|_{\V}
			\\& 
			\leq  C\big\{\| \g\|_{\mathrm{L}^2(\Omega)^{d^2}}+\|\h\|_{\mathrm{H}^{-1}(\Omega)^{d}}+\|\boldsymbol{\vartheta}\|_{\mathrm{L}^2(\omega)^{d}}+\|\nabla \v\|_{\H}\\&\quad+\||\wi{\u}||\v |\|_{\H}\|\wi{\u}\|_{\mathrm{L}^{4}(\Omega)^d}+\|\wi{\u}\|_{\mathrm{L}^{4}(\Omega)^d}\|\nabla \v\|_{\H} \big\} \|\Phi\|_{\V},
		\end{align*}
		where we have used H\"older's  inequality and embedding $ \V \hookrightarrow \mathrm{L}^{4}_{\sigma}(\Omega)^d  \hookrightarrow \H \equiv \H' \hookrightarrow \mathrm{L}_{\sigma}^{\frac{4}{3}}(\Omega)^d  \hookrightarrow \V'$. Integrating the above estimate from $0$ to $T$, and using the energy estimate \eqref{a62}, along with the hypothesis $\wi{\u} \in \mathrm{L}^\infty(Q)^d$,  $\v_0 \in \H$,  $\g \in \mathrm{L}^2(Q)^{d^2}$, $\boldsymbol{\vartheta} \in \mathrm{L}^2(Q_{\omega})^d$, and $\h \in \mathrm{L}^2(0,T;\mathrm{H}^{-1}(\Omega)^{d})$,  we deduce that $\v_t \in \mathrm{L}^2(0,T;\V')$, and  we also  have
		\begin{align}\label{a621}
			\|\v_t\|_{\mathrm{L}^2(0,T;\V')}^2 \leq C\bigg(
			\|\v_0\|_{\H}^2 + \|\g\|_{\mathrm{L}^2(Q)^{d^2}}^2 + \|\h\|_{\mathrm{L}^2(0,T;\mathrm{H}^{-1}(\Omega)^{d})}^2 +
			\|\boldsymbol{\vartheta}\|_{\mathrm{L}^2(Q_{\omega})^{d}}^2  \bigg).
		\end{align}
		The fact that
		\begin{align*}
			\v \in \mathrm{L}^2(0,T;\V) \ \ \ \text{and} \ \ \ \v_t \in \mathrm{L}^2(0,T;\V'),
		\end{align*}
		implies that $\v \in \C([0,T];\H)$. 
		
		The first equation in the system \eqref{a5} can be used directly to deduce the pressure field $q$ by $\v_0,\v, \g, \h, \boldsymbol{\vartheta}$. Taking divergence on both sides of the first equation in \eqref{a5}, we get
		\begin{align*}
			\Delta q = \nabla \cdot \big\{\nabla \cdot \g+\h+\boldsymbol{\vartheta}\mathds{1}_{\omega}-(\wi{\u} \cdot \nabla)\v-(\v \cdot \nabla)\wi{\u} -\beta|\wi{\u}|^{2}\v-2 \beta (\v \cdot \wi{\u}) \wi{\u}\big\},
		\end{align*}
		in the weak sense. From the above equation, we have
		\begin{align}\label{a622}
			q=(\Delta)^{-1}\big[\nabla \cdot \big\{\nabla \cdot \g+\h+\boldsymbol{\vartheta}\mathds{1}_{\omega}-(\wi{\u} \cdot \nabla)\v-(\v \cdot \nabla)\wi{\u} -\beta|\wi{\u}|^{2}\v-2 \beta (\v \cdot \wi{\u}) \wi{\u}\big\} \big].
		\end{align}
		Taking $\mathrm{L}^2$-norm on both sides of the equation \eqref{a622}, and then using elliptic regularity (Cattabriga's regularity theorem) and H\"older's inequality, we obtain
		\begin{align*}
			\|q\|_{\mathrm{L}^2(\Omega)}&=\big\| (\Delta)^{-1}\big[\nabla \cdot \big\{\nabla \cdot \g+\h+\boldsymbol{\vartheta}\mathds{1}_{\omega}-(\wi{\u} \cdot \nabla)\v-(\v \cdot \nabla)\wi{\u}\\&\quad -\beta|\wi{\u}|^{2}\v-2 \beta (\v \cdot \wi{\u}) \wi{\u}\big\} \big] \big\|_{\mathrm{L}^2(\Omega)} \\&
			\leq C\big\| \big[\nabla \cdot \big\{\nabla \cdot \g+\h+\boldsymbol{\vartheta}\mathds{1}_{\omega}-\nabla \cdot (\wi{\u} \otimes{\v})- \nabla \cdot (\v \otimes\wi{\u}) \\&\quad-\beta|\wi{\u}|^{2}\v-2 \beta (\v \cdot \wi{\u}) \wi{\u}\big\} \big] \big\|_{\mathbb{H}^{-2}(\Omega)} \\&
			\leq  C \big(\| \g \|_{\mathrm{L}^2(\Omega)^{d^2}}+\|\h  \|_{\mathrm{H}^{-1}(\Omega)^{d}}+\| \boldsymbol{\vartheta}\mathds{1}_{\omega} \|_{\mathrm{H}^{-1}(\Omega)^{d}}+\| \wi{\u}\otimes \v \|_{\H}+\| |\wi{\u}|^2 |\v| \|_{\V'} \big)\\&
			\leq C \big\{ \|\g \|_{\mathrm{L}^2(\Omega)^{d^2}}+\|\h \|_{\mathrm{H}^{-1}(\Omega)^{d}}+\|\boldsymbol{\vartheta} \|_{\mathrm{L}^2(\omega)^{d}}+\|\wi{\u}\|_{\mathrm{L}^{4}(\Omega)^{d}} \|\v\|_{\mathrm{L}^{4}(\Omega)^{d}} +\||\wi{\u}|^{2}|\v | \|_{\mathrm{L}^{\frac{4}{3}}(\Omega)^{d}}\big\}
			\\&
			\leq C \big\{ \|\g \|_{\mathrm{L}^2(\Omega)^{d^2}}+\|\h \|_{\mathrm{H}^{-1}(\Omega)^{d}}+\|\boldsymbol{\vartheta} \|_{\mathrm{L}^2(\omega)^{d}}+\|\wi{\u}\|_{\mathrm{L}^{4}(\Omega)^{d}} \|\v\|_{\V}+\||\wi{\u}||\v |\|_{\H}\|\wi{\u}\|_{\mathrm{L}^{4}(\Omega)^d} \big\}.
		\end{align*}
		Taking the square  on the both sides in the above equation and then integrating the resulting estimate from $0$  to $T$, we have 
		\begin{align}\label{a623}
			\int_{0}^{T} \|q(t)\|_{\mathrm{L}^2(\Omega)}^2 \d t &\leq C \bigg\{\int_{0}^{T} \|\g(t) \|_{\mathrm{L}^2(\Omega)^{d^2}}^2 \d t+  \int_{0}^{T}\|\h (t) \|_{\mathrm{H}^{-1}(\Omega)^{d}}^2 \d t+  \int_{0}^{T} \|\boldsymbol{\vartheta} (t)\|_{\mathrm{L}^2(\omega)^{d}}^2 \d t\no\\&\quad+  \sup_{t\in[0,T]} \| \wi{\u}(t)\|_{\mathrm{L}^4(\Omega)^d}^2\bigg(\int_{0}^{T} \|\v(t)\|_{\V}^2 \d t+  \int_{0}^{T} \||\wi{\u}(t)||\v (t)|\|_{\H}^2 \d t\bigg)\bigg\}.
		\end{align}
		Substituting  the estimate \eqref{a62} into \eqref{a623} leads to
		\begin{align}\label{a624}
			\|q\|_{\mathrm{L}^2(Q)}^2 \leq C\bigg(\|\v_0\|_{\H}^2 + \|\g\|_{\mathrm{L}^2(Q)^{d^2}}^2+ \|\h\|_{\mathrm{L}^2(0,T;\mathrm{H}^{-1}(\Omega)^{d})}^2 +
			\|\boldsymbol{\vartheta}\|_{\mathrm{L}^2(Q_{\omega})^{d}}^2  \bigg).
		\end{align}
		Hence, from \eqref{a62}, \eqref{a621} and \eqref{a624}, we derive the existence of a solution
		\begin{align*}
			\v \in \C([0,T];\H) \cap \mathrm{L}^2(0,T;\V), \ \ \text{and} \ \ q \in \mathrm{L}_{0}^2(Q),
		\end{align*}
		and \eqref{a7} follows from \eqref{a62}, \eqref{a621} and \eqref{a624}.
		\vskip 0.2cm
		\noindent
		\textbf{Uniqueness and stability:} 
		Since system \eqref{a5} is linear, we can see from an estimate similar to \eqref{a7} (see \eqref{a71}) that the solution  $(\v,q)$ depends continuously on the initial data, which leads to the Lipschitz stability of the solution $(\v,q)$  in \eqref{a71}, and the uniqueness is immediate. 
	\end{proof}
	We remark that the significance of this result will be highlighted in sections \ref{sec4} and \ref{sec5}, especially for proving the null controllability of the linearized system \eqref{a5}. 
	\section{Carleman Estimates and observability inequality}\label{sec3}\setcounter{equation}{0}
	The exact controllability problem for a linear evolution system can often be proven by examining the observability inequality of the associated adjoint system.
	
	In this section, we derive a Carleman estimate and an observability inequality for the adjoint system \eqref{a12} corresponding to the linearized system \eqref{a5}, which is necessary to study the exact controllability of the trajectories of the CBF equations. The only way we know to obtain such an estimate and inequality is to use a global Carleman estimate for the Stokes system.  We begin this section by addressing the weight functions that will be used throughout the article. Other weight classes will be presented in the sequel as needed.

	\subsection{Weight functions}
	We proceed to construct appropriate weight functions. Let us recall that  $\omega \subset \Omega$ is a non-empty open set.
	\begin{lemma}\label{lem1}
		Let $\omega$ be an arbitrary fixed subdomain of $\Omega$. Then, there exists a function $\eta \in \C^2(\overline{\Omega})$ such that
		\begin{align*}
			\eta(x) > 0, \ \ \forall \ x \in \Omega, \ \ \ \ \eta|_{\partial \Omega}=0, \ \ \ \ |\nabla \eta (x)|>0, \ \ \forall \ x \in \overline{ \Omega \backslash{\omega}}.
		\end{align*}
	\end{lemma}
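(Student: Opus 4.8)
The only genuine obstruction to the existence of such an $\eta$ is the presence of critical points: any $\C^2$ function that is positive in $\Omega$ and vanishes on $\partial\Omega$ must attain an interior maximum, hence possesses at least one critical point in $\Omega$. The strategy is therefore to first produce a function with the correct boundary behaviour and only finitely many critical points, all lying in the interior, and then to herd all of these critical points into $\omega$ by means of a diffeomorphism of $\overline\Omega$ that fixes a neighbourhood of $\partial\Omega$. I would fix once and for all a nonempty open set $\omega_0$ with $\overline{\omega_0}\subset\omega$; it then suffices to build $\eta$ whose critical set is contained in $\omega_0$, because $\overline{\Omega\setminus\omega}\cap\overline{\omega_0}=\varnothing$ (any point of $\overline{\omega_0}$ lies in the open set $\omega$ and so cannot be a limit of points of $\Omega\setminus\omega$), whence $|\nabla\eta|>0$ on $\overline{\Omega\setminus\omega}$.

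First I would construct an auxiliary function $g\in\C^2(\overline\Omega)$ with $g>0$ in $\Omega$, $g=0$ on $\partial\Omega$, and $\partial g/\partial\n<0$ on $\partial\Omega$; for instance one may take $g$ to be the solution of $-\Delta g=1$ in $\Omega$, $g=0$ on $\partial\Omega$, for which the maximum principle gives positivity and the Hopf lemma gives the strict sign of the normal derivative. By the latter, $\nabla g$ does not vanish in some collar $U\supset\partial\Omega$, so $g$ has no critical points near the boundary. On the compact set $\overline\Omega\setminus U$, a small generic perturbation supported in the interior (available since Morse functions are dense, by Sard's theorem and standard transversality) turns $g$ into a Morse function $f_0$; taking the perturbation small enough preserves positivity in $\Omega$, the boundary values, and the nonvanishing of $\nabla f_0$ on $U$. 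Thus $f_0$ has only finitely many critical points $x_1,\dots,x_k$, all contained in $\Omega$.

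Next I would move these points into $\omega_0$. Since $\Omega$ is connected and $d\geq 2$, the identity component of the diffeomorphism group acts transitively on configurations of finitely many distinct interior points; concretely, one joins each $x_j$ to a distinct target point in $\omega_0$ by a path in $\Omega$ and integrates a time-dependent vector field compactly supported in $\Omega$, producing a diffeomorphism $\Phi\colon\overline\Omega\to\overline\Omega$ that equals the identity near $\partial\Omega$ and satisfies $\Phi(x_j)\in\omega_0$ for every $j$. Setting $\eta=f_0\circ\Phi^{-1}$, the chain rule $\nabla\eta(y)=\bigl(D\Phi^{-1}(y)\bigr)^{\!\top}\nabla f_0\bigl(\Phi^{-1}(y)\bigr)$, together with the invertibility of $D\Phi^{-1}$, shows that the critical set of $\eta$ is exactly $\{\Phi(x_j)\}\subset\omega_0$. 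Because $\Phi$ is the identity near $\partial\Omega$, $\eta$ inherits $\eta>0$ in $\Omega$ and $\eta|_{\partial\Omega}=0$ from $f_0$ and belongs to $\C^2(\overline\Omega)$; combined with the disjointness noted above, this yields $|\nabla\eta|>0$ on $\overline{\Omega\setminus\omega}$, as required.

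The main obstacle is the diffeomorphism step: one must move \emph{all} of the critical points simultaneously into the prescribed set $\omega_0$ while keeping the boundary fixed and, crucially, creating no new critical points. This is precisely why it is essential that $\Phi$ be a diffeomorphism (so that $D\Phi$ is everywhere invertible) and be the identity near $\partial\Omega$ (so the boundary conditions survive the composition). A secondary, purely technical point is the regularity bookkeeping, namely carrying out the perturbation and the flow in a class smooth enough that $\eta=f_0\circ\Phi^{-1}$ remains $\C^2$ up to the boundary. This construction is classical and goes back to the work of Fursikov and Imanuilov.
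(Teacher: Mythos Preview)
Your argument is correct and is precisely the classical construction of Fursikov and Imanuvilov that the paper invokes; the paper does not reprove the lemma but simply refers to \cite[Lemma~1.1]{FIm}, and what you have written is an accurate sketch of that proof (Poisson solution, Morse perturbation away from the boundary, then a compactly supported diffeomorphism to push the finitely many critical points into $\omega_0$). The only point worth tightening is the regularity bookkeeping you already flag: with $\partial\Omega$ merely $\C^2$ the solution of $-\Delta g=1$ is a priori only $\C^{1,\alpha}(\overline\Omega)$, so one either assumes a hair more boundary smoothness or mollifies $g$ in the interior before the Morse step; either fix is routine and does not affect the structure of the argument.
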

	
	A proof of this lemma can be found in  \cite[Lemma 1.1]{FIm}. Let us define $\ell \in \C^\infty([0,T])$ be such that
	\begin{align*}
		\ell(t)=t \ \ \text{on} \ \bigg[0,\frac{T}{4}\bigg], \ \ \ell(t)=T-t \ \ \text{on} \ \bigg[\frac{3T}{4},T\bigg], \ \ \ell(t) \geq \frac{T}{4} \ \ \text{on} \ \bigg[\frac{T}{4}, \frac{3T}{4}\bigg].
	\end{align*}
	Using the function $\eta$ constructed in Lemma \ref{lem1}, we introduce the weight functions as
	\begin{align}
		\psi(x,t)&= \xi(x,t)-\frac{e^{\lambda (\|\eta\|_{\mathrm{L}^\infty}+m_{2})}}{\ell^{4}(t)}, \ \ \ \  \ \ \ \ \xi(x,t)=\frac{e^{\lambda (\eta(x)+m_1)}}{\ell^{4}(t)},\label{o5}\\
		\check{\psi}(t)&=\min_{x \in \overline{\Omega}}\psi(x,t)=\psi(t)|_{\partial \Omega}=\frac{e^{\lambda m_1}-e^{\lambda (\|\eta\|_{\mathrm{L}^\infty}+m_{2})}}{\ell^{4}(t)}, \label{10}\\
		\check{\xi}(t)&=\min_{x \in \overline{\Omega}}\xi(x,t)=\xi(t)|_{\partial \Omega}=\frac{e^{\lambda m_1}}{\ell^{4}(t)},\label{11}
	\end{align}
	where $\lambda\geq 1$ and the constants $m_1$ and $m_2$ are chosen such that $m_1 \leq m_2$ (see  \cite[Lemma 4.1]{Puel}), and there exists a constant $C>0$ such that 
	\begin{align*}
		\bigg|\frac{\partial \psi}{\partial t}\bigg| \leq C \xi^\frac{5}{4}, \ \ \ \text{and} \ \ \ \bigg|\frac{\partial^{2} \psi}{\partial t^2}\bigg| \leq C \xi^\frac{3}{2}.
	\end{align*} 
	We notice that 
	\begin{equation}\label{12}
		\left\{
		\begin{aligned}
			&\frac{1} {\check{\xi}(t)^{\frac{1}{4}}} \frac{\partial \check{\psi}(t)}{\partial t}\leq C \xi(t), \ \ \  \frac{1} {\check{\xi}(t)^{\frac{5}{4}}} \frac{\partial \check{\xi}(t)}{\partial t} \leq C, \ \ \text{for all} \ t \in [0,T], \\
			& \check{\psi}(t)\leq \psi(x,t), \ \ \ \check{\xi}(t) \leq \xi(x,t), \ \ \ \check{\xi}(t) \geq C_0>0, \  \text{for all} \ (x,t) \in \Omega \times (0,T).
		\end{aligned}
		\right.
	\end{equation} 
	We define the space $\mathrm{H}^{\frac{1}{4},\frac{1}{2}}(\Sigma)$ using a classical notation as
	\begin{align*} \mathrm{H}^{\frac{1}{4},\frac{1}{2}}(\Sigma)=\mathrm{H}^{\frac{1}{4}}(0,T; \mathrm{L}^2(\partial \Omega)) \cap \mathrm{L}^2(0,T; \mathrm{H}^{\frac{1}{2}}(\partial \Omega)).
	\end{align*}
	We remark that Fursikov and Imanuvilov  first introduced functions of this type in \cite{FIm} to obtain Carleman inequalities for the heat equations. We also state that these  weight functions were already defined by J.-P. Puel in the work \cite{Puel} in order to obtain a Carleman estimate for the Stokes system.
	\subsection{Carleman estimates} In the following subsection, we deduce a Carleman estimate for the adjoint system \eqref{a12} associated with the linearized system \eqref{a5}. This will provide a null controllability result for the linearized CBF equations \eqref{a5}. We closely follow similar ideas of Carleman estimates for the Stokes system with Dirichlet conditions, which were proved by J.-P. Puel in \cite{Puel}. To this end, let us consider the following Stokes system:
	\begin{equation}\label{o1}
		\left\{
		\begin{aligned}
			\frac{\partial\y}{\partial t}- \Delta\y+\nabla q&=\k, \ \text{ in }  Q \\ \nabla\cdot\y&=0,  \ \text{ in }  Q,\\
			\y&=\mathbf{0},  \ \text{ on }  \Sigma,  \\
			\y(0)&=\y_0,  \text{ in }  \Omega.
		\end{aligned}
		\right.
	\end{equation}
	It is well known (see \cite{RT}) that if $\y_0 \in \H$ and $ \k \in \mathrm{L}^2\big(0,T;\mathrm{H}^{-1}(\Omega)^d\big)$, where $\mathrm{H}^{-1}(\Omega)$ is the dual space of $\mathrm{H}_0^{1}(\Omega)$, there exist a unique solution $(\y,q)$ of the system \eqref{o1} such that 
	\begin{align*}
		\y \in \C([0,T];\H) \cap \mathrm{L}^2(0,T;\V), \ \ \ \text{and} \ \ \ q \in \mathrm{L}_0^2(Q).
	\end{align*}
	Moreover, from the classical regularity result, if $\y_0 \in \V$ and $ \k \in \mathrm{L}^2(Q)^d$, we have
	\begin{align*}
		\y \in \C([0,T];\V) \cap \mathrm{L}^2(0,T;\H^2(\Omega)\cap \V), \ \ 	\frac{\partial\y}{\partial t} \in \mathrm{L}^2(0,T;\H),  \ \ \ \text{and} \ \ \ \nabla q \in \mathrm{L}^2(Q)^d,
	\end{align*}
	and in particular
	\begin{align}\label{o2}
		\bigg	\|\frac{\partial\y}{\partial t}\bigg\|_{\mathrm{L}^2(0,T;\H)}+\|\y\|_{\mathrm{L}^{\infty}(0,T;\V)}+\|\y\|_{\mathrm{L}^2(0,T;\H^2(\Omega))}+\|\nabla q\|_{\mathrm{L}^2(Q)^d} \leq C\big(\|\y_0\|_{\V}+\|\k\|_{\mathrm{L}^2(Q)^d} \big).
	\end{align}
	Let us define $\z:=\nabla \times \y=\text{curl} \ \y$. Taking the curl of the equation \eqref{o1}, we get
	\begin{equation}\label{o3}
		\left\{
		\begin{aligned}
			\frac{\partial\z}{\partial t}- \Delta\z&=\nabla \times \k , \ \text{ in }  Q, \\
			\z(x,0)&=\z_0(x)=\nabla \times \y_0(x), \ \ \text{ in }  \Omega.
		\end{aligned}
		\right.
	\end{equation}
	As $\nabla \cdot \y=0$, we also have for a.e. $t \in (0,T)$
	\begin{equation}\label{o4}
		\left\{
		\begin{aligned}
			-\Delta\y(t)&=\nabla \times \z(t) , \ \text{ in } \Omega, \\
			\y(t)&=\mathbf{0}, \ \text{ on }  \partial\Omega.
		\end{aligned}
		\right.
	\end{equation}
	We notice that $ \y=\mathbf{0} \ \text{ on } \partial\Omega$ implies 
	$${\nabla \y=(\nabla \y \cdot \boldsymbol{n})\boldsymbol{n},  \ \text{ on } \ \partial\Omega.}$$
	Thus, $\z_{|_ {\partial\Omega}}$ can be expressed in terms of $\nabla \y \cdot \boldsymbol{n}$.
	
	The vector function $\z$ verifies a system of non-homogeneous heat equations \eqref{o3}. We can use the global Carleman estimate from \cite{ImpY} or an improvement established in \cite{ImpY1}.	For non-homogeneous parabolic equations, the following result of the global Carleman estimate has been established in \cite{ImpY1}.
	\begin{theorem}[Theorem 2.2, \cite{ImpY1}]
		Let $\omega$ be a non-empty open subset of $\Omega$ and $T>0$. Let us consider the weights $\eta,\psi$ and $ \xi$ defined in Lemma \ref{lem1} and \eqref{o5}. Then, there exist $s_0 \geq 1, \ \lambda_0 \geq 1$ and $C>0$ such that for  every $s \geq s_0$ and $\lambda \geq \lambda_0$, we have
		\begin{align}\label{o6}
			\int_{Q}e^{2s\psi}\bigg(\frac{|\nabla \z|^2}{s \xi }+s \lambda^2 \xi |\z|^2 \bigg)\d x \d t  
			& \leq C \bigg(s^{-\frac{1}{2}}\|\xi^{-\frac{1}{4}}e^{s\psi}\z \|_{\mathrm{H}^{\frac{1}{4},\frac{1}{2}}(\Sigma)}^2+\int_{Q}e^{2s\psi}|\k|^2 \d x \d t\bigg)\no \\&\quad+Cs \lambda^2 \int_{Q_{\omega}}e^{2s\psi} \xi |\z|^2 \d x \d t.
		\end{align}
	\end{theorem}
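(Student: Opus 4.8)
The statement is a classical global Carleman inequality for the (component-wise) heat operator $\partial_t-\Delta$ carrying a non-homogeneous boundary contribution, and the route I would take is the Fursikov–Imanuvilov conjugation-and-commutator method. Since the operator in \eqref{o3} acts diagonally on the components of $\z$, it suffices to establish the estimate for a single scalar equation $\partial_t z-\Delta z=k$ on $Q$ and then sum over components; crucially, no homogeneous boundary condition is imposed on $z$, so its traces on $\Sigma$ must be carried through the computation.

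The first step is the conjugation $w:=e^{s\psi}z$, so that $z=e^{-s\psi}w$ and $e^{s\psi}(\partial_t z-\Delta z)=Pw$. Using \eqref{o5} together with $\nabla\psi=\nabla\xi=\lambda\xi\nabla\eta$, I would split $Pw=P_1w+P_2w=e^{s\psi}k$ into its self-adjoint and skew-adjoint parts,
\[
P_1w=-\Delta w-s^2\lambda^2\xi^2|\nabla\eta|^2w-s\,\psi_t\,w,\qquad
P_2w=\partial_t w+2s\lambda\xi\,\nabla\eta\cdot\nabla w+s\lambda^2\xi|\nabla\eta|^2w,
\]
with a further lower-order multiplication term retained in $P_2$ to make it exactly skew-adjoint. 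Squaring and integrating over $Q$ gives $\|P_1w\|_{\mathrm{L}^2(Q)}^2+\|P_2w\|_{\mathrm{L}^2(Q)}^2+2(P_1w,P_2w)_{\mathrm{L}^2(Q)}=\|e^{s\psi}k\|_{\mathrm{L}^2(Q)}^2$, and the heart of the proof is expanding the cross term $2(P_1w,P_2w)$ through integration by parts in both $x$ and $t$.

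The cross term yields, after these integrations by parts and the choice $\lambda\ge\lambda_0$ large, a positive lower bound reproducing exactly the two weights on the left of \eqref{o6}: a zeroth-order term controlled from below by $c\,s\lambda^2\int_Q\xi|\nabla\eta|^2|w|^2\,\d x\,\d t$ and, in combination with $\|P_1w\|^2$, a gradient term bounded below by $c\,s^{-1}\int_Q\xi^{-1}|\nabla w|^2\,\d x\,\d t$; all the remaining volume contributions carry strictly smaller powers of $s$ and are absorbed by taking $s\ge s_0$ large. Here Lemma~\ref{lem1} is indispensable: since $|\nabla\eta|>0$ on $\overline{\Omega\setminus\omega}$, the coefficient $\xi|\nabla\eta|^2$ is bounded below away from $\omega$, so positivity holds on $Q\setminus Q_\omega$, while over $\omega$ one inserts a cut-off and transfers the offending integral into the local observation term $s\lambda^2\int_{Q_\omega}e^{2s\psi}\xi|\z|^2\,\d x\,\d t$ on the right, again up to absorbable lower-order terms. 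The singular factor $\ell^{-4}(t)$ built into $\xi$ forces the weight $e^{2s\psi}$ to vanish as $t\to0,T$, eliminating the temporal boundary terms at the initial and final times, while the bounds $|\psi_t|\le C\xi^{5/4}$ and $|\psi_{tt}|\le C\xi^{3/2}$ recorded after \eqref{o5} are precisely what control the time-derivative contributions by the dominant terms.

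The genuinely delicate point, and the step I expect to be the main obstacle, is the treatment of the spatial boundary terms on $\Sigma$. Because $\z$ does not vanish on $\partial\Omega$, the integrations by parts in $x$ leave boundary integrals in $w$ and $\partial_n w$; rewriting them in terms of $z$ and the trace of the weight, and estimating them by parabolic trace interpolation, produces precisely the anisotropic norm $\|\xi^{-1/4}e^{s\psi}\z\|_{\mathrm{H}^{1/4,1/2}(\Sigma)}^2$ appearing in \eqref{o6}, with the small gain $s^{-1/2}$. Tracking the exact powers of $s\xi$ through the trace and interpolation estimates is the subtle bookkeeping of the argument. Once the boundary terms are isolated on the right and the volume terms absorbed, I would undo the conjugation via $\nabla w=e^{s\psi}(\nabla z+s\lambda\xi\nabla\eta\,z)$, which turns $s^{-1}\int_Q\xi^{-1}|\nabla w|^2$ and $s\lambda^2\int_Q\xi|w|^2$ back into the left-hand side of \eqref{o6} written for $z$, and finally sum over the components of $\z$ to conclude.
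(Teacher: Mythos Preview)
The paper does not prove this theorem at all: it is quoted verbatim as Theorem~2.2 of \cite{ImpY1} and used as a black box to derive the subsequent Carleman estimate for the Stokes system. So there is no ``paper's own proof'' to compare against.

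That said, your outline is a faithful sketch of the Fursikov--Imanuvilov conjugation method that underlies \cite{ImpY,ImpY1}, and the ingredients you list---the conjugation $w=e^{s\psi}z$, the decomposition $P=P_1+P_2$, the cross-term expansion, the use of $|\nabla\eta|>0$ on $\overline{\Omega\setminus\omega}$ from Lemma~\ref{lem1}, the vanishing of time-boundary terms through the degeneracy of $\ell^{-4}$, and the emergence of the anisotropic trace norm $\mathrm{H}^{1/4,1/2}(\Sigma)$ from the spatial boundary integrals---are exactly the ones needed. The one place where your sketch is optimistic is the boundary analysis: obtaining the precise factor $s^{-1/2}$ in front of $\|\xi^{-1/4}e^{s\psi}\z\|_{\mathrm{H}^{1/4,1/2}(\Sigma)}^2$ requires a careful lifting argument (introducing an auxiliary function that matches the boundary data and subtracting it off) rather than a direct trace estimate, and this is where most of the technical work in \cite{ImpY,ImpY1} actually lies. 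Your paragraph acknowledges this as ``the main obstacle'' but does not indicate how it is overcome; in a full proof this step would need to be spelled out, as the naive integration-by-parts boundary terms involve $\partial_n w$ on $\Sigma$, which is not directly controlled by the $\mathrm{H}^{1/4,1/2}$ norm of $\z$ alone.
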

	The vector function $\y$ is a solution of the elliptic equation \eqref{o4} with right hand side $\nabla \times \z$. We can apply a result of \cite{ImP} which gives a Carleman estimate for the elliptic equation \eqref{o4} with the weight function  for fixed $ t \in (0,T)$,
	\begin{align}\label{o61}
		\gamma (x)=e^{\lambda(\eta(x)+m_{1})}.
	\end{align}
	\begin{theorem}[Theorem A.1, \cite{ImP}]
		Let $\omega$ be a non-empty open subset of $\Omega$ and  $\gamma$ be the weight function defined in \eqref{o61}. Then,
		there exists $\tau_0 \geq 1, \ \lambda_0 \geq 1$ and $C>0$ such that for  every $\tau \geq \tau_0$ and $\lambda \geq \lambda_0$, the function $\y(\cdot)$ satisfies 
		\begin{align}\label{o7}
			&\int_{\Omega}e^{2\tau\gamma }\big(|\nabla \y(t)|^2+\tau^2 \lambda^2 \gamma^2 |\y(t)|^2 \big)\d x  \no \\
			& \leq C \bigg(\tau \int_{\Omega}e^{2 \tau \gamma}\gamma|\z (t)|^2 \d x+\tau^2 \lambda^2\int_{\omega}e^{2 \tau \gamma }\gamma^2|\y(t)|^2 \d x\bigg),
		\end{align}
		for a.e. $t \in (0,T)$.
	\end{theorem}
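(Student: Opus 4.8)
The statement is an elliptic Carleman inequality for the Poisson problem \eqref{o4} with the single exponential weight $\gamma = e^{\lambda(\eta+m_1)}$, and the plan is the classical conjugation-and-splitting argument. First I would pass to the conjugated variable $\w = e^{\tau\gamma}\y$ and compute the conjugated operator $L_\tau\w := e^{\tau\gamma}(-\Delta)(e^{-\tau\gamma}\w) = -\Delta\w - \tau^2|\nabla\gamma|^2\w + 2\tau\nabla\gamma\cdot\nabla\w + \tau(\Delta\gamma)\w$. I would then decompose $L_\tau = P_1 + P_2$ into its formally self-adjoint part $P_1\w = -\Delta\w - \tau^2|\nabla\gamma|^2\w$ and its skew-adjoint part $P_2\w = 2\tau\nabla\gamma\cdot\nabla\w + \tau(\Delta\gamma)\w$. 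Since $\y$ solves \eqref{o4} we have $L_\tau\w = e^{\tau\gamma}(\nabla\times\z)$, and $\w = \mathbf 0$ on $\partial\Omega$ because $\gamma$ is constant there.

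Next I would take $\mathrm{L}^2(\Omega)$ norms, giving $\|L_\tau\w\|^2 = \|P_1\w\|^2 + \|P_2\w\|^2 + 2(P_1\w, P_2\w)$, and evaluate the decisive cross term by integration by parts. Using $\nabla\gamma = \lambda\gamma\nabla\eta$ and $\Delta\gamma = \lambda\gamma\Delta\eta + \lambda^2\gamma|\nabla\eta|^2$, the dominant contributions are $\tau\lambda^2\int_\Omega\gamma|\nabla\eta|^2|\nabla\w|^2 + \tau^3\lambda^4\int_\Omega\gamma^3|\nabla\eta|^4|\w|^2$, whose positivity on $\overline{\Omega\setminus\omega}$ is guaranteed by the non-vanishing of $\nabla\eta$ from Lemma \ref{lem1}. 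Fixing $\lambda$ large and then $\tau$ large lets these two terms absorb every lower-order error term on $\Omega\setminus\omega$, while the contributions supported in $\omega$ are moved to the right-hand side. The boundary integrals produced along the way reduce, thanks to $\w|_{\partial\Omega} = \mathbf 0$, to expressions of the form $|\partial_{\boldsymbol{n}}\w|^2\,\partial_{\boldsymbol{n}}\eta$; since $\eta$ attains its minimum on $\partial\Omega$ one has $\partial_{\boldsymbol{n}}\eta\le 0$, so these carry the favourable sign and may be discarded.

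Finally I would undo the conjugation: writing $\nabla\y = e^{-\tau\gamma}(\nabla\w - \tau\lambda\gamma\nabla\eta\,\w)$ converts the $\w$-weighted estimate into the claimed bound with $e^{2\tau\gamma}(|\nabla\y|^2 + \tau^2\lambda^2\gamma^2|\y|^2)$ on the left. Two points then require extra care. The localized gradient term $\int_\omega\gamma|\nabla\w|^2$ must be replaced by the pure zeroth-order term $\int_\omega\gamma^2|\y|^2$ of \eqref{o7}, which is achieved by a Caccioppoli/cut-off argument (enlarging $\omega$ slightly at the outset and integrating by parts against a smooth cutoff). Moreover, the source $e^{\tau\gamma}(\nabla\times\z)$ should be paired and integrated by parts so that the curl falls on the weighted test function rather than on $\z$; each derivative hitting $e^{\tau\gamma}$ yields a factor $\tau\lambda\gamma$, and after Young's inequality this produces exactly $\tau\int_\Omega e^{2\tau\gamma}\gamma|\z|^2$, absorbed by the dominant left-hand terms. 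I expect the cross-term bookkeeping in the second step, together with this curl integration by parts generating the $\tau$ factor, to be the main obstacle; the weight properties from Lemma \ref{lem1} and the boundary sign condition are precisely what make the argument close.
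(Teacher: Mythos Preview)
The paper does not prove this theorem at all: it is quoted verbatim as Theorem~A.1 of \cite{ImP} and used as a black box, with no argument supplied. There is therefore nothing in the paper to compare your proposal against.

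That said, your outline is the standard and correct route to such an elliptic Carleman estimate: conjugate by $e^{\tau\gamma}$, split the resulting operator into symmetric and antisymmetric parts, expand the cross term by integration by parts, exploit $|\nabla\eta|>0$ on $\overline{\Omega\setminus\omega}$ from Lemma~\ref{lem1} together with $\partial_{\boldsymbol n}\eta\le 0$ on $\partial\Omega$ to control signs, and finish with a cutoff argument to remove the local gradient term and an integration by parts to shift the curl off $\z$. All of these steps are exactly what the proof in \cite{ImP} carries out, so your plan is faithful to the original source even though the present paper omits it.
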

	Let us take $\tau=\frac{s}{\ell^{4}(t)}$ and multiply the expression \eqref{o7} by $\lambda^2 \exp\bigg(-2s\frac{ e^{ \lambda (\|\eta\|_{\mathrm{L}^\infty}+m_{2})}}{\ell^{4}(t)}\bigg)$, and then integrate the resulting estimate from $0$ to $ T$, we obtain 
	\begin{align}\label{o8}
		&\int_{Q}e^{2s\psi}\big(\lambda^2|\nabla \y|^2+s^2 \lambda^4 \xi^2|\y|^2\big) \d x\d t\no \\
		& \leq C \bigg(s \lambda^2\int_{Q}e^{2s\psi}\xi|\z|^2 \d x \d t+s^2 \lambda^4 \int_{Q_{\omega}}e^{2s\psi} \xi^2 |\y|^2 \d x \d t\bigg).
	\end{align}
	Combining the estimates \eqref{o6} and \eqref{o8}, we arrive at
	\begin{align}\label{o9}
		&\int_{Q}e^{2s\psi}\bigg(\frac{|\nabla \z|^2}{s \xi }+s \lambda^2 \xi |\z|^2+\lambda^2|\nabla \y|^2+s^2 \lambda^4 \xi^2|\y|^2\bigg) \d x \d t \no \\
		& \leq C \bigg(s^{-\frac{1}{2}}\bigg\|\xi^{-\frac{1}{4}}e^{s\psi}\frac{\partial \y}{\partial \boldsymbol{n}} \bigg\|_{\mathrm{H}^{\frac{1}{4},\frac{1}{2}}(\Sigma)}^2+\int_{Q}e^{2s\psi}|\k|^2 \d x \d t\bigg)\no\\&\quad+C \int_{Q_{\omega}}e^{2s\psi}\big(s \lambda^2 \xi |\z|^2+ s^2 \lambda^4 \xi^2|\y|^2\big) \d x \d t.
	\end{align}
	We will now use the classical regularity estimates for the solutions of the Stokes system to estimate the boundary term $\left\|\xi^{-\frac{1}{4}}e^{s\psi}\frac{\partial \y}{\partial \boldsymbol{n}} \right\|_{\mathrm{H}^{\frac{1}{4},\frac{1}{2}}(\Sigma)}^2$. To do this, let us introduce the following functions:
	\begin{align*}
		\y^*=\check{\xi}(t)^{-\frac{1}{4}}e^{s\check{\psi}(t)}\y,  \ \ \ \text{and} \ \ \   q^*=\check{\xi}(t)^{-\frac{1}{4}}e^{s\check{\psi}(t)} q,
	\end{align*}
	where $\check{\psi}$ and $\check{\xi}$ are defined in \eqref{10} and \eqref{11}, respectively.
	Then, they satisfy
	\begin{equation}\label{o12}
		\left\{
		\begin{aligned}
			\frac{\partial\y^*}{\partial t}- \Delta\y^*+\nabla q^*&=\frac{e^{s\check{\psi}}}{\check{\xi}^{\frac{1}{4}}}\k+\frac{s} {\check{\xi}^{\frac{1}{4}}} \frac{\partial \check{\psi}}{\partial t}e^{s\check{\psi}}\y-\frac{1} {4\check{\xi}^{\frac{5}{4}}} \frac{\partial \check{\xi}}{\partial t}e^{s\check{\psi}}\y, \  \text{ in }  Q, \\ \nabla\cdot\y^*&=0, \  \text{ in }  Q, \\ 
			\y^*&=\mathbf{0}, \ \text{ on }  \Sigma, \\  \y^*(0)&=\mathbf{0},  \ \text{ in }  \Omega.
		\end{aligned}
		\right.
	\end{equation}
	Using the estimate \eqref{12} and	regularity result for the above system (cf. \cite{RT}) yield
	\begin{align}\label{o121}
		\bigg\|\frac{\partial\y^*}{\partial t} \bigg\|_{\mathrm{H}^1(0,T;\H)}+\|\y^*\|_{\mathrm{L}^2(0,T;\H^2(\Omega))}^2 \leq C\big(\|e^{s \psi} \k\|_{\mathrm{L}^2(Q)^d}^2+ s^{2}\|\xi e^{s \psi}\y\|_{\mathrm{L}^2(0,T;\H)}^2\big).
	\end{align}
	On the other hand, using \cite[Proposition 3.2]{ImpY}, we have
	\begin{align}\label{o122}
		\bigg	\|\frac{\partial\y^*}{\partial \boldsymbol{n}}\bigg\|_{\mathrm{H}^{\frac{1}{4},\frac{1}{2}}(\Sigma)}^2=\bigg\|\frac{e^{s\check{\psi}}}{\check{\xi}^{\frac{1}{4}}}\frac{\partial\y}{\partial \boldsymbol{n}}\bigg\|_{\mathrm{H}^{\frac{1}{4},\frac{1}{2}}(\Sigma)}^2 \leq C\|\y^*\|_{\mathrm{H}^{1,2}(Q)}^2,
	\end{align}
	where $$\mathrm{H}^{1,2}(Q)=\mathrm{H}^1(0,T;\H) \cap \mathrm{L}^2(0,T;\H^2(\Omega)).$$ 
	From \eqref{o121} and \eqref{o122}, we deduce 
	\begin{align}\label{o13}
		s^{-\frac{1}{2}}	\bigg\|\frac{e^{s\check{\psi}}}{\check{\xi}^{\frac{1}{4}}}\frac{\partial\y}{\partial \boldsymbol{n}}\bigg\|_{\mathrm{H}^{\frac{1}{4},\frac{1}{2}}(\Sigma)}^2 \leq C\left(s^{-\frac{1}{2}}\|e^{s \psi} \k\|_{\mathrm{L}^2(Q)^d}^2+ s^{\frac{3}{2}}\|\xi e^{s \psi}\y\|_{\mathrm{L}^2(0,T;\H)}^2\right).
	\end{align}
	Substituting the estimate \eqref{o13} into \eqref{o9} and then taking $s_0$ large enough such that for $s \geq s_0$, we can absorb the term $s^{\frac{3}{2}}\|\xi e^{s \psi}\y\|_{\mathrm{L}^2(0,T;\H)}^2 $ from the left hand-side, and we obtain the first Carleman estimate for the Stokes system ($\z= \nabla \times \y$)
	\begin{align}\label{o14}
		&\int_{Q}e^{2s\psi}\bigg(\frac{|\nabla \z|^2}{s \xi }+s \lambda^2 \xi |\z|^2+\lambda^2|\nabla \y|^2+s^2 \lambda^4 \xi^2 |\y|^2\bigg) \d x \d t \no \\
		& \leq C \bigg(\int_{Q}e^{2s\psi}|\k|^2 \d x \d t+ \int_{Q_{\omega}}e^{2s\psi}\big(s \lambda^2 \xi |\z|^2+s^2\lambda^4 \xi^2 |\y|^2\big) \d x \d t\bigg).
	\end{align}
	Now,  we want to get ride of the local term $Cs \lambda^2\int_{Q_{\omega}}e^{2s\psi} \xi |\z|^2 \d x\d t$ from the right hand side in \eqref{o14}. First of all, we can find a non-empty open subset $\omega_0$ of $\Omega$ such that $\overline{\omega_0} \subset \omega$ and $|\nabla\eta(x)| >0$ in $ \overline{\Omega \backslash\omega_0}$. Thus, we can replace $\omega$ by $\omega_0$ in the second term of the right-hand side of \eqref{o14} to deduce
	\begin{align}\label{o15}
		&\int_{Q}e^{2s\psi}\bigg(\frac{|\nabla \z|^2}{s \xi }+s \lambda^2 \xi |\z|^2+\lambda^2|\nabla \y|^2+s^2 \lambda^4 \xi^2 |\y|^2\bigg) \d x \d t \no \\
		& \leq C \bigg(\int_{Q}e^{2s\psi}|\k|^2 \d x \d t+ s \lambda^2 \int_{Q_{\omega_{0}}}e^{2s\psi} \xi |\z|^2\d x \d t+s^2\lambda^4  \int_{Q_{\omega}}e^{2s\psi}\xi^2 |\y|^2 \d x \d t\bigg).
	\end{align}
	Now, our aim is to eliminate the term $Cs \lambda^2\int_{Q_{\omega_{0}}}e^{2s\psi} \xi |\z|^2 \d x\d t$ in \eqref{o15}.
	Let us take a cut-off function $\theta \in \C_{0}^{\infty}(\omega)$ such that
	\begin{align*}
		\theta \equiv 1, \  \text{ in }  \omega_{0}, \ \text{ and } \  \ 0 \leq \theta \leq 1, \  \text{ in }  \omega.
	\end{align*}
	Then, we have
	\begin{align*}
		&C	s \lambda^2\int_{Q_{\omega_{0}}}e^{2s\psi} \xi |\z|^2 \d x\d t \leq 	Cs \lambda^2\int_{Q_{\omega}}e^{2s\psi}\theta \xi |\z|^2 \d x\d t
		\\&=C s \lambda^2\int_{Q_{\omega}} e^{2s\psi}\theta \xi \z  \cdot (\nabla \times\y)\d x\d t
		=C s \lambda^2\int_{Q_{\omega}}\nabla \times \big(e^{2s\psi}\theta \xi \z \big) \cdot\y\d x\d t \\&\leq C s \lambda^2\int_{Q_{\omega}}e^{2s\psi}\theta \xi |\nabla  \z|| \y| \d x\d t + C s \lambda^2\int_{Q_{\omega}}e^{2s\psi} \xi |\y| \big(2s \theta \lambda\xi |\z|+\xi |\z|+\lambda \xi\theta |\z|\big) \d x\d t
		\\& \leq
		C\bigg(\int_{Q_{\omega}}\frac{e^{2s\psi}}{s \xi}|\nabla \z|^2\d x \d t\bigg)^\frac{1}{2}\bigg(\int_{Q_{\omega}}s^3 \lambda^4 \xi^3e^{2s\psi}| \y|^2\d x \d t\bigg)^\frac{1}{2}\\&\quad+C\bigg(\int_{Q_{\omega}}s \lambda^2 \xi e^{2s\psi}| \z|^2\d x \d t\bigg)^\frac{1}{2}\bigg(\int_{Q_{\omega}}s^3 \lambda^4 \xi^3e^{2s\psi}| \y|^2\d x \d t\bigg)^\frac{1}{2}
		\\&\quad+C\bigg(\int_{Q_{\omega}}s \lambda^2 \xi e^{2s\psi}| \z|^2\d x \d t\bigg)^\frac{1}{2}\bigg(\int_{Q_{\omega}}s \lambda^2 \xi e^{2s\psi}| \y|^2\d x \d t\bigg)^\frac{1}{2}
		\\&\quad+C\bigg(\int_{Q_{\omega}}s \lambda^2 \xi e^{2s\psi}| \z|^2\d x \d t\bigg)^\frac{1}{2}\bigg(\int_{Q_{\omega}}s \lambda^4 \xi e^{2s\psi}| \y|^2 \d x \d t\bigg)^\frac{1}{2} \\& \leq
		\frac{1}{2}\int_{Q_{\omega}}\frac{e^{2s\psi}}{s \xi}|\nabla \z|^2\d x \d t+\frac{1}{2} \int_{Q_{\omega}}s \lambda^2 \xi e^{2s\psi}| \z|^2\d x \d t+C\int_{Q_{\omega}}s^3 \lambda^4 \xi^3e^{2s\psi}| \y|^2\d x \d t,
	\end{align*}
	where we have used the Cauchy-Schwarz and Young's inequalities.
	Substituting the above estimate in \eqref{o15}, we arrive at
	\begin{align}\label{o18}
		\no	&\int_{Q}e^{2s\psi}\bigg(\frac{|\nabla \z|^2}{s \xi }+s \lambda^2 \xi |\z|^2+\lambda^2|\nabla \y|^2+s^2 \lambda^4 \xi^2 |\y|^2\bigg) \d x \d t \no \\
		& \leq C \bigg(\int_{Q}e^{2s\psi}|\k|^2 \d x \d t+ \int_{Q_{\omega}}e^{2s\psi}s^2\lambda^4 \xi^2 |\y|^2 \d x \d t\bigg)+\frac{1}{2}\int_{Q_{\omega}}\frac{e^{2s\psi}}{s \xi}|\nabla \z|^2\d x \d t\no\\&\quad+\frac{1}{2} \int_{Q_{\omega}}s \lambda^2 \xi e^{2s\psi}| \z|^2\d x \d t+C\int_{Q_{\omega}}s^3 \lambda^4 \xi^3e^{2s\psi}| \y|^2\d x \d t.
	\end{align} 
	From \eqref{o18}, we obtain the Carleman estimate for the Stokes system as follows:
	\begin{theorem}
		Let $\omega$ be a non-empty open subset of $\Omega$ and $T>0$. Let us consider the weight functions $\eta, \psi$ and $\xi$ defined in Lemma \ref{lem1} and \eqref{o5}. Then there exists $s_0 \geq 1, \ \lambda_0 \geq 1$ and $C>0$ such that for $s \geq s_0$ and $\lambda \geq \lambda_0$ and every solution $\y$ of the Stokes system \eqref{o1}, we have
		\begin{align}\label{o19}
			&\int_{Q}e^{2s\psi}\bigg(\frac{|\nabla \z|^2}{s \xi }+s \lambda^2 \xi |\z|^2+\lambda^2|\nabla \y|^2+s^2 \lambda^4 \xi^2 |\y|^2\bigg) \d x \d t \no \\
			& \leq C \bigg(\int_{Q}e^{2s\psi}|\k|^2 \d x \d t+s^3 \lambda^4 \int_{Q_{\omega}}e^{2s\psi} \xi^3 |\y|^2 \d x \d t\bigg).
		\end{align}
	\end{theorem}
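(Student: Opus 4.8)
The plan is to deduce \eqref{o19} directly from the estimate \eqref{o18}, which has already been assembled from the global Carleman estimate \eqref{o6} for the vorticity system \eqref{o3}, the elliptic Carleman estimate \eqref{o8} for \eqref{o4}, the boundary-term bound \eqref{o13}, and the curl integration-by-parts device that removed the local term $s\lambda^2\int_{Q_{\omega_0}}e^{2s\psi}\xi|\z|^2\d x\d t$. Hence essentially all the analytic work is in place, and only two elementary bookkeeping steps separate \eqref{o18} from the claimed inequality.

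First, I would absorb the two local vorticity integrals still sitting on the right-hand side of \eqref{o18}, namely $\frac{1}{2}\int_{Q_\omega}\frac{e^{2s\psi}}{s\xi}|\nabla\z|^2\d x\d t$ and $\frac{1}{2}\int_{Q_\omega}s\lambda^2\xi e^{2s\psi}|\z|^2\d x\d t$, into the left-hand side. Since both integrands are nonnegative and $Q_\omega\subset Q$, each of these is bounded above by the corresponding full-cylinder integral $\int_Q\frac{e^{2s\psi}}{s\xi}|\nabla\z|^2\d x\d t$, respectively $\int_Q s\lambda^2\xi e^{2s\psi}|\z|^2\d x\d t$, which appear on the left of \eqref{o18} with coefficient $1$. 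Subtracting them therefore leaves those two terms on the left with coefficient $\frac{1}{2}$, and multiplying through by $2$ restores the form of the inequality at the cost of enlarging the generic constant $C$.

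Second, I would merge the two remaining local $|\y|^2$ contributions. By the lower bound $\check{\xi}(t)\geq C_0>0$ from \eqref{12}, and hence $\xi(x,t)\geq C_0$ on $\Omega\times(0,T)$, one has $\xi^2\leq C_0^{-1}\xi^3$; together with $s\geq s_0\geq 1$, so that $s^2\leq s^3$, the term $s^2\lambda^4\int_{Q_\omega}e^{2s\psi}\xi^2|\y|^2\d x\d t$ is dominated by $Cs^3\lambda^4\int_{Q_\omega}e^{2s\psi}\xi^3|\y|^2\d x\d t$. Combining it with the existing $s^3\lambda^4\xi^3$ local term collapses the two into a single local term $Cs^3\lambda^4\int_{Q_\omega}e^{2s\psi}\xi^3|\y|^2\d x\d t$, which is exactly the local term appearing in \eqref{o19}. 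After relabelling constants, the resulting inequality is precisely \eqref{o19}.

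I do not expect this final passage to present any real obstacle; the genuine difficulty of the theorem lies in the steps already carried out before \eqref{o18}---in particular, controlling the normal-derivative boundary term $\|\xi^{-\frac{1}{4}}e^{s\psi}\frac{\partial\y}{\partial\boldsymbol{n}}\|_{\mathrm{H}^{\frac{1}{4},\frac{1}{2}}(\Sigma)}^2$ via the regularity estimate for the shifted Stokes problem \eqref{o12}, and removing the local vorticity term through the curl identity and Young's inequality. The derivation of \eqref{o19} from \eqref{o18} uses only the inclusion $Q_\omega\subset Q$ and the strict positivity of $\xi$.
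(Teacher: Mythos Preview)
Your proposal is correct and matches the paper's own argument exactly: the paper develops everything up to \eqref{o18} in the text preceding the theorem and then simply says ``From \eqref{o18}, we obtain the Carleman estimate for the Stokes system as follows,'' leaving the final absorption and merging of local terms implicit. Your two bookkeeping steps---absorbing the half-weighted local vorticity integrals via $Q_\omega\subset Q$ and collapsing the two local $|\y|^2$ terms using $\xi\geq C_0$ and $s\geq 1$---are precisely what is needed to pass from \eqref{o18} to \eqref{o19}.
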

	Note  that the adjoint system \eqref{a12} can be viewed as the previous	Stokes system \eqref{o1} with $t$ and $\k$ replaced by $T-t$ and $$
	\k^*=(D \vr) \wi{\u}-\alpha\vr-\beta|\wi{\u}|^{2}\vr-2 \beta (\vr \cdot \wi{\u}) \wi{\u},$$
	respectively, with the hypothesis that $\wi{\u} \in \mathrm{L}^\infty(Q)^d$. Thus, it is immediate that
	\begin{align*}
		\|e^{s\psi}\k^*\|_{\mathrm{L}^2(Q)^d}^2 & \leq 2\|\wi{\u}\|_{\mathrm{L}^\infty(Q)^d}^2\|e^{s\psi} \nabla \vr\|_{\mathrm{L}^2(0,T;\H)}^2 +4\alpha^2\|e^{s\psi} \vr\|_{\mathrm{L}^2(0,T;\H)}^2 \\&\quad+36\beta^2 \|\wi{\u}\|_{\mathrm{L}^\infty(Q)^d}^4\|e^{s\psi}  \vr\|_{\mathrm{L}^2(0,T;\H)}^2\nonumber\\&\leq C\|e^{s\psi} \nabla \vr\|_{\mathrm{L}^2(0,T;\H)}^2 +C\|\lambda\xi e^{s\psi}  \vr\|_{\mathrm{L}^2(0,T;\H)}^2.
	\end{align*}
	Now, applying the Carleman estimate \eqref{o19}  and by choosing $\lambda_0$ (where $\lambda \geq \lambda_0$) large enough, we can absorb the terms in the left-hand side and we have the following Carleman estimate for the adjoint state $\vr$ (see \eqref{a12}):
	\begin{align}\label{o20}
		\no	&\int_{Q}e^{2s\psi}\bigg(\frac{|\nabla(\nabla \times \vr)|^2}{s \xi }+s \lambda^2 \xi |\nabla \times \vr|^2+\lambda^2|\nabla \vr|^2+s^2 \lambda^4 \xi^2 |\vr|^2\bigg) \d x \d t \no \\
		& \leq C s^3 \lambda^4\int_{Q_{\omega}} \xi^3e^{2s\psi}| \vr|^2\d x \d t.
	\end{align}
	
	From now on, let us fix $s \geq s_0$ and $\lambda \geq \lambda_0$ such that \eqref{o20} holds. We consider the following weight functions in order to deduce a Carleman estimate that does not vanish at $t=0$:
	\begin{equation}\label{21}
		\left\{
		\begin{aligned}
			\wi{\psi}(t)=\psi(t), \ \ \text{if} \ t \in [T/2,T], \ \ \ \wi{\psi}(t)=\psi(T/2), \ \ \text{if} \ t \in [0,T/2], \\
			\wi{\xi}(t)=\xi(t), \ \ \text{if} \ t \in [T/2,T], \ \ \ \ \ \wi{\xi}(t)=\xi(T/2), \ \ \text{if} \ t \in [0,T/2].
		\end{aligned}
		\right.
	\end{equation}
	We note that the new weight functions $\wi{\psi}$ and $\wi{\xi}$ are no longer degenerate in the neighborhood of $t=0$.  Moreover, by the construction of weight functions $\psi(x,t)$ and $\xi(x,t)$ defined in \eqref{o5}, one can see that $\wi{\psi} \leq 0$ for $t \in [0,T]$. Replacing $\psi$ and $\xi$ by $\wi{\psi}$ and $\wi{\xi}$, respectively, and using the standard energy estimates for the backward Stokes system, one can obtain the same Carleman estimate:
	\begin{align}\label{o23}
		\no	&\int_{Q}e^{2s\wi{\psi}}\bigg(\frac{|\nabla(\nabla \times \vr)|^2}{s \wi{\xi} }+s \lambda^2 \wi{\xi} |\nabla \times \vr|^2+\lambda^2|\nabla \vr|^2+s^2 \lambda^4 \wi{\xi}^2 |\vr|^2\bigg) \d x \d t \no \\
		& \leq C s^3 \lambda^4\int_{Q_{\omega}} \wi{\xi}^3e^{2s\wi{\psi}}| \vr|^2\d x \d t.
	\end{align}

	\subsection{Observability inequality}\label{subseco}
	In this subsection, we derive an observability inequality for the adjoint system \eqref{a12} associated with the linearized system \eqref{a5} around a trajectory $(\wi{\u},\wi{p})$ of \eqref{a2}, which leads to  the null controllability of the system \eqref{a5} at any time $T>0$. First, we present a remark which is helpful in deriving the observability inequality.
	\begin{remark}
		Let $\vr(T) \in \H$ and let $(\vr,\pi)$ be the corresponding solution of the adjoint system \eqref{a12}. We know that $\vr \in \C([0,T];\H) \cap \mathrm{L}^2(0,T;\V)$. Let $\chi \in \C^{1}([0,T])$ be such that $\chi(T)=0$. Then $(\wi{\vr},\wi{\pi}):=(\chi \vr,\chi \pi)$ solves the system:
		\begin{equation}\label{o2311}
			\left\{
			\begin{aligned}
				\mathcal{L}^{*} \wi{\vr}+\nabla \wi{\pi}&=-\chi'\vr,  \ \text{ in }  Q,\\   \nabla\cdot \wi{\vr}&=0, \ \text{ in } Q, 
				\\ \wi{\vr}&=\mathbf{0},  \ \text{ on }  \Sigma, \\
				\wi{\vr}(T)&=\mathbf{0},  \ \text{ in }  \Omega,
			\end{aligned}
			\right.
		\end{equation}
		where $\mathcal{L}^{*}$ is the adjoint operator of $\mathcal{L}$ defined in \eqref{a121}. Therefore, $(\wi{\vr},\wi{\pi})$ is a strong solution of \eqref{o2311} and we have
		\begin{align*}
			\vr(t) \in \H^2(\Omega)\cap \V,  \ \ \vr_{t}(t) \in \H,  \ \ \text{and} \ \	\pi(t) \in \mathrm{H}^1(\Omega)\cap\mathrm{L}^2_0(\Omega),
		\end{align*}
		for a.e. $t \in [0,T]$.
	\end{remark}
	\begin{theorem}\label{thmo}
		Let $\wi{\psi}$ and $\wi{\xi}$ be the weight functions defined in \eqref{21} and  $(\wi{\u},\wi{p})$ be the solution of \eqref{a2} satisfying \eqref{a2a}. Then, there exists a positive constant $C$ depending on $s, \lambda, \wi{\u}$ and $T$, such that every solution to the adjoint system \eqref{a12} satisfies:
		\begin{align}\label{o231}
			&\|\vr(0)\|_{\H}^2+\int_{Q} \wi{\xi}^2 e^{2s \wi{\psi}}| \vr|^2\d x \d t+\int_{Q}e^{2s \wi{\psi}}| \nabla \vr|^2\d x \d t
			\no\\&\leq C \int_{Q_{\omega}} \wi{\xi}^3e^{2s\wi{\psi}}| \vr|^2\d x \d t.
		\end{align}
	\end{theorem}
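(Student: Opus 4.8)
The plan is to derive \eqref{o231} from the Carleman estimate \eqref{o23} together with a backward-in-time energy estimate for the adjoint system \eqref{a12}. Observe first that, since $s$ and $\lambda$ are now fixed so that \eqref{o23} holds, the last two integrals on the left-hand side of \eqref{o231} are controlled directly by \eqref{o23}: the terms $\int_Q s^2\lambda^4\wi{\xi}^2 e^{2s\wi{\psi}}|\vr|^2\d x\d t$ and $\int_Q\lambda^2 e^{2s\wi{\psi}}|\nabla\vr|^2\d x\d t$ both appear on its left-hand side, so dividing by the fixed constants $s^2\lambda^4$ and $\lambda^2$ yields
\begin{align*}
\int_Q\wi{\xi}^2 e^{2s\wi{\psi}}|\vr|^2\d x\d t+\int_Q e^{2s\wi{\psi}}|\nabla\vr|^2\d x\d t\leq C\int_{Q_\omega}\wi{\xi}^3 e^{2s\wi{\psi}}|\vr|^2\d x\d t.
\end{align*}
Hence the only genuinely new quantity to bound is $\|\vr(0)\|_{\H}^2$.

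To estimate $\|\vr(0)\|_{\H}^2$, I would take the $\H$-inner product of the first equation of \eqref{a12} with $\vr$. Using $\nabla\cdot\vr=0$ and $\vr|_{\partial\Omega}=\mathbf{0}$ the pressure term drops, integration by parts produces the dissipation $\mu\|\nabla\vr\|_{\H}^2$, and the zeroth-order adjoint terms $\alpha\|\vr\|_{\H}^2$, $\beta\||\wi{\u}||\vr|\|_{\H}^2$ and $2\beta\|\wi{\u}\cdot\vr\|_{\H}^2$ are all nonnegative. The only term without a sign is $((D\vr)\wi{\u},\vr)$, which, using $\wi{\u}\in\mathrm{L}^\infty(Q)^d$ from \eqref{a2a}, is bounded by $C\|\nabla\vr\|_{\H}\|\wi{\u}\|_{\mathrm{L}^\infty(Q)^d}\|\vr\|_{\H}$ and hence, by Young's inequality, by $\frac{\mu}{2}\|\nabla\vr\|_{\H}^2+C\|\wi{\u}\|_{\mathrm{L}^\infty(Q)^d}^2\|\vr\|_{\H}^2$. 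After absorbing the gradient term one is left with the differential inequality
\begin{align*}
-\frac{\d}{\d t}\|\vr(t)\|_{\H}^2\leq K\|\vr(t)\|_{\H}^2,\qquad K:=2C\|\wi{\u}\|_{\mathrm{L}^\infty(Q)^d}^2,
\end{align*}
which shows that $t\mapsto e^{Kt}\|\vr(t)\|_{\H}^2$ is nondecreasing on $[0,T]$.

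Next I would exploit this monotonicity. For every $t\in[0,T/2]$ it gives $\|\vr(0)\|_{\H}^2\le e^{Kt}\|\vr(t)\|_{\H}^2\le e^{KT/2}\|\vr(t)\|_{\H}^2$; integrating over $[0,T/2]$ yields
\begin{align*}
\|\vr(0)\|_{\H}^2\leq \frac{2e^{KT/2}}{T}\int_0^{T/2}\|\vr(t)\|_{\H}^2\d t.
\end{align*}
Here the modification \eqref{21} is crucial: on $[0,T/2]$ the weights $\wi{\psi}$ and $\wi{\xi}$ are frozen at their values at $t=T/2$, hence bounded above and bounded below away from zero, so that $\wi{\xi}^2 e^{2s\wi{\psi}}\geq C_0>0$ on $[0,T/2]\times\Omega$. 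Consequently $\int_0^{T/2}\|\vr(t)\|_{\H}^2\d t\le C_0^{-1}\int_Q\wi{\xi}^2 e^{2s\wi{\psi}}|\vr|^2\d x\d t$, and combining this with the bound for the latter integral already obtained from \eqref{o23} gives $\|\vr(0)\|_{\H}^2\le C\int_{Q_\omega}\wi{\xi}^3 e^{2s\wi{\psi}}|\vr|^2\d x\d t$. Adding the three estimates then produces \eqref{o231}.

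The main obstacle is securing the correct direction of monotonicity for a backward-in-time system and guaranteeing that the weights do not degenerate near $t=0$; this is precisely why the original weights $\psi,\xi$ were replaced by $\wi{\psi},\wi{\xi}$ in \eqref{21}, since $\xi$ blows up and $e^{2s\psi}$ vanishes as $t\to 0^+$, so the corresponding integrals would carry no information about $\vr(0)$. From this point on every constant is allowed to depend on the now-fixed parameters $s,\lambda$, on $\wi{\u}$ through $\|\wi{\u}\|_{\mathrm{L}^\infty(Q)^d}$, and on $T$, in agreement with the statement of the theorem.
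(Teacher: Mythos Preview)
Your argument is correct. Both your proof and the paper's rest on the same two ingredients: the Carleman estimate \eqref{o23} with the time-frozen weights, and a backward-in-time dissipation estimate for \eqref{a12} to capture $\|\vr(0)\|_{\H}^2$. The difference is purely technical. The paper introduces a cutoff $\chi\in\C^1([0,T])$ equal to $1$ on $[0,T/2]$ and $0$ on $[3T/4,T]$, applies an energy estimate to $(\chi\vr,\chi\pi)$ to obtain $\sup_{[0,T/2]}\|\vr\|_{\H}^2+\int_0^{T/2}\|\vr\|_{\V}^2\le C\int_{T/2}^{3T/4}\|\vr\|_{\H}^2$, and then feeds the right-hand side back into the Carleman inequality on an interval where the original and modified weights coincide. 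You replace this cutoff machinery by the one-line Gronwall monotonicity $t\mapsto e^{Kt}\|\vr(t)\|_{\H}^2$ nondecreasing, integrate over $[0,T/2]$, and use directly that $\wi\xi^2 e^{2s\wi\psi}\ge C_0>0$ there. Your route is shorter and avoids the separate invocation of \eqref{o20}; the paper's route yields the slightly stronger intermediate estimate \eqref{o232} (which also controls $\int_0^{T/2}\|\vr\|_{\V}^2$), though that extra information is not needed for \eqref{o231}.
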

	\begin{proof}
		The proof of this lemma is classical and relies on the Carleman estimate \eqref{o23} and the dissipation properties of the solutions of the adjoint system  \eqref{a12}. This tells us that $\|\vr(t)\|_{\H}$ is an increasing function of $t$, so the presence of $\|\vr(0)\|_{\H}$ in the left-hand side of \eqref{o231} is not a surprise (see \cite{SG}). For completeness, we include the proof here (cf. \cite{CGIP,SG}).
		
		We start with an a-priori estimate for the adjoint system \eqref{a12}:
		\begin{align}\label{o232}
			\sup_{t\in[0,T/2]}\|\vr(t)\|_{\H}^2+\int_0^{T/2}\|\vr(t)\|_{\V}^2 \d t\leq C \int_{T/2}^{3T/4}\|\vr(t)\|_{\H}^2\d t,
		\end{align}
		where $C$ depends on $\Omega, T$ and $ \wi{\u}$. To prove this, let us take $\chi \in \C^{1}([0,T])$ such that
		\begin{align*}
			\chi=1 \ \ \text{in} \ [0,T/2], \ \ \ \ \chi\equiv 0  \ \text{ in } [3T/4,T], \ \ \ \text{and}  \ \ \ |\chi'|\leq C.
		\end{align*}
		Then, $(\chi \vr,\chi \pi)$  solves the system \eqref{o2311}, and we obtain the following energy estimate:
		\begin{align}\label{o233}
			\sup_{t\in[0,T]}\|\chi(t)\vr(t)\|_{\H}^2+	\int_0^T\|\chi(t)\vr(t)\|_{\V}^2\d t \leq C \int_0^T\|\chi'(t)\vr(t)\|_{\H}^2\d t, 
		\end{align}
		and \eqref{o232} follows from \eqref{o233}.  In $\Omega\times(0,T/2)$, \eqref{o232} implies
		\begin{align}\label{o234}
			&\|\vr(0)\|_{\H}^2+\int_{0}^{T/2}\int_{\Omega} \wi{\xi}^2 e^{2s \wi{\psi}}| \vr|^2\d x \d t+\int_{0}^{T/2}\int_{\Omega}e^{2s \wi{\psi}}| \nabla \vr|^2\d x \d t \no\\& \leq  C\left(\sup_{t\in[0,T/2]}\|\vr(t)\|_{\H}^2+	\int_0^{T/2}\|\vr(t)\|_{\V}^2\d t\right)\no\\&
			\leq C \int_{T/2}^{3T/4}\int_{\Omega} | \vr|^2\d x \d t
			\leq  \int_{Q}s^2\lambda^4\wi{\xi}^2e^{2s\wi{\psi}} | \vr|^2\d x \d t,
		\end{align}
	where we have used the fact $e^{2s \wi{\psi} }\leq e^{2s_{0} \wi{\psi}} \leq C$ for $\wi{\psi}\leq0$ and chosen  $0<C\leq s^2\lambda^4\wi{\xi}^2e^{2s\wi{\psi}}$ for $s \geq s_0 \geq1, \ \lambda \geq \lambda_0 \geq1$ in the last inequality. Applying the Carleman estimate \eqref{o23} into \eqref{o234}, we obtain a first estimate in $\Omega\times(0,T/2)$:
		\begin{align}\label{o235}
			&\|\vr(0)\|_{\H}^2+\int_{0}^{T/2}\int_{\Omega} \wi{\xi}^2 e^{2s \wi{\psi}}| \vr|^2\d x \d t+\int_{0}^{T/2}\int_{\Omega}e^{2s \wi{\psi}}| \nabla \vr|^2\d x \d t \no\\& \leq C s^3 \lambda^4 \int_{Q_{\omega}}\wi{\xi}^3e^{2s\wi{\psi}} | \vr|^2\d x \d t.
		\end{align}
		On the other hand, since $\psi=\wi{\psi}$ and $\xi=\wi{\xi}$ in $\Omega\times(T/2,T)$, we have
		\begin{align}\label{o236}
			&\int_{T/2}^{T}\int_{\Omega} \wi{\xi}^2 e^{2s \wi{\psi}}| \vr|^2\d x \d t+\int_{T/2}^{T}\int_{\Omega}e^{2s \wi{\psi}}| \nabla \vr|^2\d x \d t\no\\&=\int_{T/2}^{T}\int_{\Omega} {\xi}^2 e^{2s {\psi}}| \vr|^2\d x \d t+\int_{T/2}^{T}\int_{\Omega}e^{2s {\psi}}| \nabla \vr|^2\d x \d t \no\\&\leq \bigg(s^2 \lambda^4 \int_{Q} \xi^2 e^{2s {\psi}} |\vr|^2\d x \d t+\lambda^2\int_{Q}e^{2s\psi}|\nabla \vr|^2 \d x \d t\bigg) 
			\no\\&\leq C s^3 \lambda^4 \int_{Q_{\omega}}{\xi}^3e^{2s{\psi}} | \vr|^2\d x \d t
				\no\\& =C s^3 \lambda^4 \int_{0}^{T/2}\int_{\omega}{\xi}^3e^{2s{\psi}} | \vr|^2\d x \d t+C s^3 \lambda^4 \int_{T/2}^{T}\int_{\omega}{\xi}^3e^{2s{\psi}} | \vr|^2\d x \d t,
		\end{align}
		where we have used the Carleman estimate \eqref{o20}. By the definition of weight functions \eqref{o5} and \eqref{21}, from \eqref{o236}, we immediately get
		\begin{align}\label{o238}
			&\int_{T/2}^{T}\int_{\Omega} \wi{\xi}^2 e^{2s \wi{\psi}}| \vr|^2\d x \d t+\int_{T/2}^{T}\int_{\Omega}e^{2s \wi{\psi}}| \nabla \vr|^2\d x \d t
				\no\\&\leq C s^3 \lambda^4 \int_{0}^{T/2}\int_{\omega}| \vr|^2\d x \d t+C s^3 \lambda^4 \int_{T/2}^{T}\int_{\omega}\wi{\xi}^3e^{2s\wi{\psi}} | \vr|^2\d x \d t
				\no\\&\leq C s^3 \lambda^4 \int_{0}^{T/2}\int_{\omega} \wi{\xi}^3e^{2s\wi{\psi}} | \vr|^2\d x \d t+C s^3 \lambda^4 \int_{T/2}^{T}\int_{\omega}\wi{\xi}^3e^{2s\wi{\psi}} | \vr|^2\d x \d t
			\no\\&= C s^3 \lambda^4 \int_{Q_{\omega}}\wi{\xi}^3e^{2s\wi{\psi}} | \vr|^2\d x \d t,
		\end{align}
		which combined with \eqref{o235} leads to 
		\begin{align*}
			&\|\vr(0)\|_{\H}^2+\int_{Q} \wi{\xi}^2 e^{2s \wi{\psi}}| \vr|^2\d x \d t+\int_{Q}e^{2s \wi{\psi}}| \nabla \vr|^2\d x \d t
			\leq C s^3 \lambda^4 \int_{Q_{\omega}}\wi{\xi}^3e^{2s\wi{\psi}} | \vr|^2\d x \d t,
		\end{align*}
		and we get the desired observability inequality \eqref{o231}.
	\end{proof}

	\section{Null Controllability of the Linear System}\label{sec4}\setcounter{equation}{0}
	In this section, we prove the null controllability of the linearized CBF equations \eqref{a5} as a	consequence of the observability inequality (see Theorem \ref{thmo}). This result will be helpful to determine the local exact controllability of the system \eqref{a1} in the next section.
	
	Later discussion of the null controllability of the linearized CBF equations is motivated by the works \cite{CGIP,Puel}, in which the authors established the null controllability of the linearized Navier-Stokes system in both two and three dimensions.
	\subsection{Penalty method}\label{subsec4.1}
	With the help of observability inequality \eqref{o231}, one can prove the null controllability of the linearized system \eqref{a5}. We establish  the following null controllability result for the system \eqref{a5}. 
	\begin{theorem}\label{thmn1}
		If $\v_0 \in \H$ and the functions $\g$ and $\h$ satisfy
		\begin{align*}
			\int_{Q}e^{-2s\wi{\psi}}|\g|^2 \d x\d t <+\infty \ \ \  \text{and} \ \ \ \int_{0}^{T} \|e^{-s \wi{\psi}}\h\|_{\mathrm{H}^{-1}(\Omega)^d}^2  \d t <+\infty,
		\end{align*}
		respectively,	then there exists a control $\boldsymbol{\vartheta} \in \mathrm{L}^2(Q_{\omega})^d$ and a solution $\v$ of the system \eqref{a5} such that
		$$\v(\cdot,T)=\mathbf{0}.$$ 
	\end{theorem}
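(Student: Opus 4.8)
The plan is to run the classical penalization argument of Fursikov and Imanuvilov, which reduces the null controllability of \eqref{a5} to the observability inequality \eqref{o231}. For each $\varepsilon>0$ I would introduce the functional
$$J_\varepsilon(\boldsymbol{\vartheta})=\frac{1}{2\varepsilon}\|\v(T)\|_{\H}^2+\frac12\int_{Q_\omega}|\boldsymbol{\vartheta}|^2\d x\d t,$$
where $\v=\v(\cdot\,;\boldsymbol{\vartheta})$ is the unique weak solution of \eqref{a5} furnished by Theorem \ref{thmwell}. Because \eqref{a5} is linear, the map $\boldsymbol{\vartheta}\mapsto\v(T)$ is affine and continuous from $\mathrm{L}^2(Q_\omega)^d$ into $\H$, so $J_\varepsilon$ is continuous, strictly convex and coercive and therefore admits a unique minimizer $\boldsymbol{\vartheta}_\varepsilon$. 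Writing the Euler--Lagrange equation and introducing the adjoint state $\vr_\varepsilon$ solving \eqref{a12} with terminal datum $\vr_\varepsilon(T)=\frac1\varepsilon\v_\varepsilon(T)$, a standard duality computation (multiply the first equation of \eqref{a5} by $\vr_\varepsilon$, integrate by parts in space–time, and use that $\mathcal{L}^*$ is the formal adjoint of $\mathcal{L}$ together with $\nabla\cdot\v=\nabla\cdot\vr_\varepsilon=0$ and the boundary conditions) yields the optimality relation $\boldsymbol{\vartheta}_\varepsilon=-\vr_\varepsilon\mathds{1}_\omega$.

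The same duality identity, combined with $\vr_\varepsilon(T)=\frac1\varepsilon\v_\varepsilon(T)$ and the optimality relation, gives the key balance
$$\frac1\varepsilon\|\v_\varepsilon(T)\|_{\H}^2+\int_{Q_\omega}|\boldsymbol{\vartheta}_\varepsilon|^2\d x\d t=(\v_0,\vr_\varepsilon(0))-\int_Q\g:\nabla\vr_\varepsilon\d x\d t+\int_0^T\langle\h,\vr_\varepsilon\rangle\d t.$$
The heart of the proof is to bound the right-hand side uniformly in $\varepsilon$. I would insert the weights $e^{\pm s\wi\psi}$ and apply the Cauchy--Schwarz inequality term by term: the first term by $\|\v_0\|_{\H}\|\vr_\varepsilon(0)\|_{\H}$, the second by $\|e^{-s\wi\psi}\g\|_{\mathrm{L}^2(Q)^{d^2}}\,\|e^{s\wi\psi}\nabla\vr_\varepsilon\|_{\mathrm{L}^2(Q)}$, and the third by $\big(\int_0^T\|e^{-s\wi\psi}\h\|_{\mathrm{H}^{-1}(\Omega)^d}^2\d t\big)^{1/2}\big(\int_0^T\|e^{s\wi\psi}\vr_\varepsilon\|_{\mathrm{H}^1_0(\Omega)^d}^2\d t\big)^{1/2}$, the latter step using $\langle\h,\vr_\varepsilon\rangle=\langle e^{-s\wi\psi}\h,e^{s\wi\psi}\vr_\varepsilon\rangle$. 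The crucial point is that the observability inequality \eqref{o231} controls $\|\vr_\varepsilon(0)\|_{\H}^2$, $\int_Q e^{2s\wi\psi}|\nabla\vr_\varepsilon|^2$ and $\int_Q\wi\xi^2 e^{2s\wi\psi}|\vr_\varepsilon|^2$ (the last one dominating the zeroth-order part of $\|e^{s\wi\psi}\vr_\varepsilon\|_{\mathrm{H}^1_0}$ since $|\nabla\wi\psi|\le C\lambda\wi\xi$) all by $C\int_{Q_\omega}\wi\xi^3 e^{2s\wi\psi}|\vr_\varepsilon|^2$; and since $\wi\xi^3 e^{2s\wi\psi}$ is bounded on $\overline Q$, this observation term is in turn $\le C\int_{Q_\omega}|\vr_\varepsilon|^2=C\|\boldsymbol{\vartheta}_\varepsilon\|_{\mathrm{L}^2(Q_\omega)^d}^2$. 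Feeding these back and absorbing with Young's inequality, I arrive at
$$\frac1\varepsilon\|\v_\varepsilon(T)\|_{\H}^2+\frac12\int_{Q_\omega}|\boldsymbol{\vartheta}_\varepsilon|^2\d x\d t\le C\bigg(\|\v_0\|_{\H}^2+\int_Qe^{-2s\wi\psi}|\g|^2\d x\d t+\int_0^T\|e^{-s\wi\psi}\h\|_{\mathrm{H}^{-1}(\Omega)^d}^2\d t\bigg),$$
whose right-hand side is finite by hypothesis.

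Finally I would pass to the limit $\varepsilon\to0$. The estimate above shows $\{\boldsymbol{\vartheta}_\varepsilon\}$ is bounded in $\mathrm{L}^2(Q_\omega)^d$ and that $\|\v_\varepsilon(T)\|_{\H}^2\le C\varepsilon\to0$; by Theorem \ref{thmwell} the corresponding states $\v_\varepsilon$ and pressures $q_\varepsilon$ are bounded in $\C([0,T];\H)\cap\mathrm{L}^2(0,T;\V)$ and $\mathrm{L}^2_0(Q)$, respectively. Extracting a subsequence with $\boldsymbol{\vartheta}_\varepsilon\rightharpoonup\boldsymbol{\vartheta}$ in $\mathrm{L}^2(Q_\omega)^d$, $\v_\varepsilon\rightharpoonup\v$ and $q_\varepsilon\rightharpoonup q$, the linearity of \eqref{a5} lets me pass to the limit and conclude that $(\v,q)$ solves \eqref{a5} with control $\boldsymbol{\vartheta}$; since $\v_\varepsilon(T)\to\mathbf0$ strongly in $\H$ while $\v_\varepsilon(T)\rightharpoonup\v(T)$ by weak continuity of the terminal trace, we obtain $\v(\cdot,T)=\mathbf0$. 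The step I expect to be most delicate is the uniform estimate: one must arrange the weights so that \eqref{o231} matches precisely the weighted norms of $\g$ and $\h$ appearing in the hypotheses, handle the $x$-dependence of $\wi\psi$ inside the $\mathrm{H}^{-1}$ pairing for the $\h$-term, and verify that $\wi\xi^3 e^{2s\wi\psi}$ stays bounded so that the unweighted control cost $\int_{Q_\omega}|\boldsymbol{\vartheta}_\varepsilon|^2$ can absorb the observed term.
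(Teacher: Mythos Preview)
Your proposal is correct and follows essentially the same penalization argument as the paper: minimize $J_\varepsilon$, derive the optimality condition $\boldsymbol{\vartheta}_\varepsilon=-\vr_\varepsilon\mathds{1}_\omega$ via the adjoint system, obtain the duality identity, combine weighted Cauchy--Schwarz with the observability inequality \eqref{o231} and the boundedness of $\wi\xi^3 e^{2s\wi\psi}$ to get the uniform estimate, and then pass to the weak limit as $\varepsilon\to0$. If anything, your treatment of the $\h$-term is slightly more careful than the paper's, since you explicitly track the extra zeroth-order contribution coming from $\nabla(e^{s\wi\psi}\vr_\varepsilon)$ and note that it is absorbed by the $\wi\xi^2$-weighted term on the left of \eqref{o231}.
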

	\begin{proof}
		Let us consider, for each $\varepsilon >0$, the following optimal control problem:
		\begin{align}\label{a8}
			\min_{\boldsymbol{\vartheta} \in \mathrm{L}^2(Q_{\omega})^d} \mathcal{J}_{\varepsilon}(\boldsymbol{\vartheta}),
		\end{align}
		where
		\begin{align}\label{a9}
			\mathcal{J}_{\varepsilon}(\boldsymbol{\vartheta})=\frac{1}{2\varepsilon}\|\v(T)\|_{\H}^2+\frac{1}{2}\int_{Q_{\omega}}|\boldsymbol{\vartheta}|^2\d x\d t,
		\end{align}
		and $\v$ is the solution of  the system \eqref{a5} associated to $\boldsymbol{\vartheta}$.
		\vskip 0.2 cm

		\vskip 0.2 cm
		\noindent \textbf{Step 1.} \emph{Existence of an optimal control:} From the definition of the cost functional $\mathcal{J}_{\varepsilon}(\cdot)$ defined in \eqref{a9}, we can see that $\mathcal{J}_{\varepsilon}(\cdot) \geq 0$, there exists an infimum $I$ of $\mathcal{J}_{\varepsilon}(\cdot)$ over $\mathrm{L}^2(Q_{\omega})^d$, that is, 
		\begin{align*}
			0 \leq I:=	\inf_{\boldsymbol{\vartheta} \in \mathrm{L}^2(Q_{\omega})^d} \mathcal{J}_{\varepsilon}(\boldsymbol{\vartheta}) <+\infty.
		\end{align*}
		Since, $0 \leq I <+ \infty$, there exists a minimizing sequence $\{\boldsymbol{\vartheta}^{n}\} \in \mathrm{L}^2(Q_{\omega})^d$ such that
		\begin{align*}
			\lim_{n \rightarrow\infty} \mathcal{J}_{\varepsilon}(\boldsymbol{\vartheta}^{n})=\lim_{n \rightarrow\infty}\bigg(	\frac{1}{2\varepsilon}\|\v^n(T)\|_{\H}^2+\frac{1}{2}\int_{Q_{\omega}}|\boldsymbol{\vartheta}^n|^2\d x\d t\bigg) =I,
		\end{align*}
		where $\v^{n}(\cdot)$ is the unique solution of the linearized problem \eqref{a5} with the control  $\boldsymbol{\vartheta}^{n} \in \mathrm{L}^2(Q_{\omega})^d$ and initial data $\v^{n}(0) \in \H$.  Since,  $\mathbf{0} \in \mathrm{L}^2(Q_{\omega})^d$, without loss of generality, we may assume that $\mathcal{J}_{\varepsilon}(\boldsymbol{\vartheta}^{n}) \leq \mathcal{J}_{\varepsilon}(\mathbf{0})$, where $\v$ is the unique solution of \eqref{a5} corresponding to the control $\mathbf{0}$. Using  the definition of $\mathcal{J}_{\varepsilon}(\cdot)$, we deduce that 
		\begin{align*}
			\frac{1}{2\varepsilon}\|\v^n(T)\|_{\H}^2+\frac{1}{2}\int_{Q_{\omega}}|\boldsymbol{\vartheta}^n|^2\d x\d t \leq  \frac{1}{2\varepsilon}\|\v(T)\|_{\H}^2 < + \infty.
		\end{align*}
		From the above expression, it is clear that, there exists a constant $R>0$, large enough such that
		\begin{align}\label{a81}
			\frac{1}{2}\int_{Q_{\omega}}|\boldsymbol{\vartheta}^n|^2\d x\d t \leq R < + \infty.
		\end{align}
		Using the energy estimate obtained in Lemma \ref{thmwell}, we get
		\begin{align*}
			\|\v^n\|_{\mathrm{L}^\infty(0,T;\H)} + \|\v^n\|_{\mathrm{L}^2(0,T;\V)}  &\leq C\big(\|\v_0\|_{\H}+\|\g \|_{ \mathrm{L}^2(Q)^{d^2}}+\|\h \|_{ \mathrm{L}^2(0,T;\mathrm{H}^{-1}(\Omega)^d)}+\|\boldsymbol{\vartheta}^n\|_{\mathrm{L}^2(Q_{\omega})^d}\big)
			\no\\&\leq C\big(\|\v_0\|_{\H}+\|\g \|_{ \mathrm{L}^2(Q)^{d^2}}+\|\h \|_{ \mathrm{L}^2(0,T;\mathrm{H}^{-1}(\Omega)^d)}+R^{\frac{1}{2}}\big).
		\end{align*}
		We know that the dual of $\mathrm{L}^1(0,T,\H) $ is $\mathrm{L}^\infty(0,T,\H) $ and $\mathrm{L}^1(0,T,\H) $ is separable. Also the spaces $\mathrm{L}^2(0,T,\V) $ and $\mathrm{L}^2(Q_{\omega})^d$ are reflexive. By using the  Banach-Alaoglu theorem, we can extract subsequences still denoted by 	$\{\v^{n}\}$ and $\{\boldsymbol{\vartheta}^{n}\}$ such that
		\begin{equation*}
		\left\{
		\begin{aligned}
			& \v^{n} \xrightharpoonup{w^{\ast}} \v_{\varepsilon} \ \text{ in } \ \mathrm{L}^\infty(0,T,\H) ,\\
			&\v^{n} \xrightharpoonup{w} \v_{\varepsilon} \ \text{ in } \  \mathrm{L}^2(0,T;\V),\\
			& \boldsymbol{\vartheta}^{n} \xrightharpoonup{w} \boldsymbol{\vartheta}_{\varepsilon} \  \text{ in } \ \mathrm{L}^2(Q_{\omega})^d.
		\end{aligned}
		\right.
		\end{equation*}
	%	\iffalse
		%From the relation \eqref{a81}, we infer that the sequence $\{\boldsymbol{\vartheta}^{n}\}$ is uniformly bounded in the space $ \mathrm{L}^2(Q_{\omega})^d$. By
		%an application of the Banach-Alaoglu theorem yields the existence of  subsequence $\{\boldsymbol{\vartheta}^{n_{k}}\}$ of  $\{\boldsymbol{\vartheta}^{n}\}$   such that
		%\begin{align*}
		%	\boldsymbol{\vartheta}^{n_{k}} \rightharpoonup \boldsymbol{\vartheta}_{\varepsilon} \  \text{ in } \ \mathrm{L}^2(Q_{\omega})^d \ \text{ weakly}.
		%\end{align*}
	%	\fi
		Moreover, $\v^n_t\xrightharpoonup{w}{\v_{\varepsilon}}_t\ \text{ in }\ \mathrm{L}^2(0,T;\V')$ (cf. \eqref{a621}). 
		Now, we show that  $(\v_{\varepsilon},\boldsymbol{\vartheta}_{\varepsilon})$  satisfies the system \eqref{a5}. Since, $\mathcal{L}$ is a linear operator defined in \eqref{a31}, from the above convergences, one can easily deduce 
		\begin{align*}
			& \mathcal{L}\v^{n} \xrightharpoonup{w} \mathcal{L}\v_{\varepsilon} \ \text{ in } \  \mathrm{L}^{2}(0,T;\V'). 
		\end{align*}
		On passing to limit $n \rightarrow \infty $ in the equation satisfied by $(\v^n,\boldsymbol{\vartheta}^{n})$,  we find that the limit $(\v_{\varepsilon},\boldsymbol{\vartheta}_{\varepsilon})$ satisfies \eqref{a5} in the weak sense.
		
		Finally, we show that $(\v_{\varepsilon},\boldsymbol{\vartheta}_{\varepsilon})$ is a minimizer, that is, $I =\mathcal{J}_{\varepsilon}(\boldsymbol{\vartheta}_{\varepsilon})$. Since the
		cost functional $\mathcal{J}_{\varepsilon}(\cdot)$ is continuous and convex on $\mathrm{L}^2(Q_{\omega})^d$ and $\mathrm{L}^\infty(0,T;\H) \cap \mathrm{L}^2(0,T;\V)$, it follows that $\mathcal{J}_{\varepsilon}(\cdot)$ is weakly lowersemicontinuous. That is, for a sequence $ \boldsymbol{\vartheta}^{n} \xrightharpoonup{w} \boldsymbol{\vartheta}_{\varepsilon} \  \text{ in } \ \mathrm{L}^2(Q_{\omega})^d,$
		we have
		\begin{align*}
			\mathcal{J}_{\varepsilon}(\boldsymbol{\vartheta}_{\varepsilon}) \leq \liminf_{n \rightarrow \infty} \mathcal{J}_{\varepsilon}(\boldsymbol{\vartheta}^n).
		\end{align*}
		Therefore, we obtain
		\begin{align*}
			I\leq	\mathcal{J}_{\varepsilon}(\boldsymbol{\vartheta}_{\varepsilon})  \leq \liminf_{n \rightarrow \infty} \mathcal{J}_{\varepsilon}(\boldsymbol{\vartheta}^n) = \lim_{n \rightarrow \infty} \mathcal{J}_{\varepsilon}(\boldsymbol{\vartheta}^n)=I,
		\end{align*}
		and hence $(\v_{\varepsilon},\boldsymbol{\vartheta}_{\varepsilon})$ is a minimizer of the problem \eqref{a8}.
		
		\vskip 0.2 cm
		\noindent \textbf{Step 2.} \emph{Uniqueness of the optimal control:}
		The optimal control problem \eqref{a8} has a solution $(\v_\varepsilon,\boldsymbol{\vartheta}_{\varepsilon})$ for any $\varepsilon >0$, and by the convexity of the cost functional and since the system \eqref{a5} is linear, this solution is even unique. 
		\vskip 0.2 cm
		\noindent \textbf{Step 3.} \emph{First order necessary conditions of optimality:} The necessary condition of minimum yields
		\begin{align}\label{a10}
			\bigg(\frac{1}{\varepsilon}\v_{\varepsilon}(T),\v_{\w}(T)\bigg)+\int_{Q_{\omega}}\boldsymbol{\vartheta}_{\varepsilon} \cdot\w \d x\d t=0,  \ \ \text{for all} \  \w \in \mathrm{L}^2(Q_{\omega})^d, 
		\end{align}
		where $\v_{\w}$  is, together with certain pressure $q_{\w}$, a solution of 
		\begin{equation}\label{a11}
			\left\{
			\begin{aligned}
				\mathcal{L} \v_{\w} +\nabla q_{\w}&=\w \mathds{1}_{\omega},  \ \text{ in }  Q, \\ \nabla\cdot\v_{\w}&=0, \ \text{ in }  Q,\\
				\v_{\w}&=\mathbf{0}, \ \text{ on }  \Sigma, \\ \v_{\w}(0)&=\mathbf{0},  \ \text{ in } \Omega.
			\end{aligned}
			\right.
		\end{equation}
		Here, the (affine) map's derivative as $\boldsymbol{\vartheta} \rightarrow\v(\boldsymbol{\vartheta})$ at the point $\boldsymbol{\vartheta}_{\varepsilon}$  in the direction
		$\w$ is represented by $\v_{\w}$. Then, the duality properties between $\v$ (see \eqref{a5}) and $\vr$ (see \eqref{a12})  yield
		\begin{align*}
			\bigg(\frac{1}{\varepsilon}\v_{\varepsilon}(T),\v_{\w}(T)\bigg)=\int_{Q_{\omega}}\vr \cdot\w \d x \d t, \ \ \text{for all} \  \w \in \mathrm{L}^2(Q_{\omega})^d,
		\end{align*}
		which combined with \eqref{a10}, provides
		\begin{align*}
			\int_{Q_{\omega}}(\boldsymbol{\vartheta}_{\varepsilon}+\vr)\cdot\w \d x\d t=0,  \ \ \text{for all} \  \w \in \mathrm{L}^2(Q_{\omega})^d.
		\end{align*}
		Consequently, we can identify $\boldsymbol{\vartheta}_{\varepsilon}$:
		\begin{align}\label{a13}
			\boldsymbol{\vartheta}_{\varepsilon}+\vr=\mathbf{0}, \  \text{ in } \ Q_{\omega}.
		\end{align}
		\vskip 0.2 cm
		\noindent \textbf{Step 4.} \emph{Null controllability of the linearized system:} 
		We are now ready to pass the limit as $\varepsilon \rightarrow 0$. To this end, by multiplying the state equation for $\v_{\varepsilon}$ (see \eqref{a5}) by $\vr$, we deduce 
		\begin{align*}
			\big(\mathcal{L} \v_{\varepsilon}+\nabla q_{\varepsilon},\vr\big)=\big(\nabla \cdot \g+\boldsymbol{\vartheta}_{\varepsilon} \mathds{1}_{\omega},\vr\big)+\langle\h,\vr \rangle. 
		\end{align*}
		On simplification, it is immediate that 
		\begin{align*}
			\frac{\d}{\d t}(\v_{\varepsilon},\vr)+\big(	\mathcal{L}^*\vr,\v_{\varepsilon}\big)=\big(\nabla \cdot \g+\boldsymbol{\vartheta}_{\varepsilon} \mathds{1}_{\omega},\vr\big)+\langle\h,\vr \rangle.
		\end{align*}
		Integrating the above equation with respect to time from $0$ to $T$, we find 
		\begin{align*}
			\big(\v_{\varepsilon}(T),\vr(T)\big)-\big(\v_0,\vr(0)\big)=-\sum_{i,j=1}^d \int_{Q}\g_{ij}\frac{\partial \vr_{i}}{\partial x_{j}} \d x\d t+\int_{Q_{\omega}}\boldsymbol{\vartheta}_{\varepsilon}\cdot \vr \d x\d t+\int_{0}^T\langle\h,\vr \rangle \d t.
		\end{align*}
		The optimality condition \eqref{a13} and \eqref{a12} imply 
		\begin{align*}
			\frac{1}{\varepsilon}\|\v_{\varepsilon}(T)\|_{\H}^2+\int_{Q_{\omega}}|\boldsymbol{\vartheta}_{\varepsilon}|^2\d x \d t=\big(\v_0,\vr(0)\big)-\sum_{i,j=1}^d \int_{Q}\g_{ij}\frac{\partial \vr_{i}}{\partial x_{j}} \d x\d t+\int_{0}^T \langle\h,\vr \rangle \d t.
		\end{align*}
		Using the Cauchy-Schwartz and Young's inequalities, we obtain
		\begin{align}\label{a131}
			&\frac{1}{\varepsilon}\|\v_{\varepsilon}(T)\|_{\H}^2+\int_{Q_{\omega}}|\boldsymbol{\vartheta}_{\varepsilon}|^2\d x \d t \no \\&
			\leq  \frac{3C}{2}\|\v_0\|_{\H}^2+\frac{3C}{2}\int_{Q} e^{-2s \wi{\psi}}|\g|^2 \d x \d t+\frac{3C}{2}\int_{0}^{T} \|e^{-s \wi{\psi}}\h\|_{\mathrm{H}^{-1}(\Omega)^d}^2  \d t \no \\&\quad+ \frac{1}{6C}\|\vr(0)\|_{\H}^2+\frac{1}{3C}\int_{Q}e^{2s \wi{\psi}}| \nabla \vr|^2\d x \d t,
		\end{align}
		where $\wi{\psi}$ and $\wi{\xi}$ are the weight functions defined in \eqref{21} and  $C$ is the constant appearing in the observability inequality (see \eqref{o231}).	From the observability inequality \eqref{o231} and optimality condition \eqref{a13}, we deduce 
		\begin{align}\label{a14}
			&\|\vr(0)\|_{\H}^2+\int_{Q} \wi{\xi}^2 e^{2s \wi{\psi}}| \vr|^2\d x \d t+\int_{Q}e^{2s \wi{\psi}}| \nabla \vr|^2\d x \d t
			\no\\&\leq C   \int_{Q_{\omega}}\wi{\xi}^3e^{2s\wi{\psi}} | \vr|^2\d x \d t \leq C \int_{Q_{\omega}} | \vr|^2\d x \d t=C \int_{Q_{\omega}}| \boldsymbol{\vartheta}_{\varepsilon}|^2 \d x\d t.
		\end{align}
		If we assume that $\g$ and $\h$ satisfy
		\begin{align*}
			\int_{Q} e^{-2s \wi{\psi}}|\g|^2 \d x \d t <+\infty \ \  \text{ and } \ \  \int_{0}^{T} \|e^{-s \wi{\psi}}\h\|_{\mathrm{H}^{-1}(\Omega)^d}^2  \d t<+\infty,
		\end{align*}
		then by using \eqref{a14} in \eqref{a131}, we have the following estimate:
		\begin{align*}
			\frac{1}{\varepsilon}\|\v_{\varepsilon}(T)\|_{\H}^2+\frac{1}{2}\int_{Q_{\omega}}|\boldsymbol{\vartheta}_{\varepsilon}|^2\d x \d t \leq C\bigg(\|\v_0\|_{\H}^2+\int_{Q} e^{-2s \wi{\psi}}|\g|^2 \d x \d t+\int_{0}^{T} \|e^{-s \wi{\psi}}\h\|_{\mathrm{H}^{-1}(\Omega)^d}^2  \d t\bigg).
		\end{align*}
		We can easily see that $\frac{1}{\sqrt{\varepsilon}}\v_{\varepsilon}(T)$ and $\boldsymbol{\vartheta}_{\varepsilon}$ are uniformly bounded in $\H$ and $\mathrm{L}^2(Q_{\omega})^d$, respectively, and independent of $\varepsilon$.
		Using the Banach-Alaoglu theorem, we can extract subsequences, still denoted by		$\boldsymbol{\vartheta}_{\varepsilon}$ and $\v_{\varepsilon}$, such that
		\begin{align*}
			&\boldsymbol{\vartheta}_{\varepsilon} \xrightharpoonup{w}\boldsymbol{\vartheta} \ \ \text{  in } \ \mathrm{L}^2(Q_{\omega})^d,\\
			&	\v_{\varepsilon}=\v(\boldsymbol{\vartheta}_{\varepsilon}) \xrightharpoonup{w} \v=\v(\boldsymbol{\vartheta}) \ \text{ in } \ \C([0,T];\H) \cap \mathrm{L}^2(0,T;\V),\\&
			\v_{\varepsilon}(T) \xrightharpoonup{w} \v(T) \ \text{ in } \ \H.
		\end{align*}
		Using the weakly lowersemicontinuity property of the norm, we have $$\|\v(T)\|_{\H}^2 \leq \liminf_{\varepsilon \rightarrow 0} \|\v_{\varepsilon}(T)\|_{\H}^2.$$
		Thus, we must have
		\begin{align*}
			\v(\cdot,T)=\mathbf{0},
		\end{align*}
		which completes the proof. 
	\end{proof}
	\subsection{Exponentially decreasing controls and solutions} In the following subsection, our aim is to find a control $\boldsymbol{\vartheta},$ which is exponentially decreasing as $t$ approaches $T$, such that not only $\v(\cdot,T)=\mathbf{0}$, but also $\v$ is exponentially decreases as $t$ approaches $T$. This result will be crucial to deduce the controllability properties for the nonlinear system \eqref{a1} in the next section. Motivated by the works \cite{CGIP,Puel}, we have the following result:
	\begin{theorem}\label{thmE}
		If $\v_0 \in \H$ and the functions $\g$ and $\h$ satisfy
		\begin{align*}
			\int_{Q}e^{-2s\wi{\psi}}|\g|^2 \d x\d t <+\infty \ \ \  \text{and} \ \ \ \int_{0}^{T} \|e^{-s \wi{\psi}}\h\|_{\mathrm{H}^{-1}(\Omega)^d}^2  \d t <+\infty,
		\end{align*}
		respectively,	then there exists a control ${\boldsymbol{\vartheta}} \in \mathrm{L}^2(Q_{\omega})^d$ and a solution ${\v}$ of \eqref{a5} such that
		$${\v}(\cdot,T)=\mathbf{0},$$ and
		\begin{align*}
			\int_{Q} e^{-2 s\wi{\psi}}|{\v}|^2 \d x \d t< +\infty \ \ \text{and} \ \ \  \int_{Q_{\omega}}\frac{e^{-2 s\wi{\psi}}}{\xi^3}|{\boldsymbol{\vartheta}}|^2 \d x \d t < + \infty.
		\end{align*}
	\end{theorem}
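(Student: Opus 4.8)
The plan is to re-run the penalized optimal control scheme used for Theorem \ref{thmn1}, but with a cost functional whose finiteness forces the exponential decay of \emph{both} the state and the control. For each $\varepsilon>0$ I would minimize over $\boldsymbol{\vartheta}\in\mathrm{L}^2(Q_{\omega})^d$ the weighted functional
\begin{equation*}
\mathcal{J}_{\varepsilon}(\boldsymbol{\vartheta})=\frac{1}{2\varepsilon}\|\v(T)\|_{\H}^2+\frac12\int_{Q} e^{-2s\wi{\psi}}|\v|^2\d x\d t+\frac12\int_{Q_{\omega}}\frac{e^{-2s\wi{\psi}}}{\wi{\xi}^3}|\boldsymbol{\vartheta}|^2\d x\d t,
\end{equation*}
where $\v=\v(\boldsymbol{\vartheta})$ solves \eqref{a5}. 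The last two terms are exactly the quantities we wish to bound; since $\wi{\xi}=\xi$ on $(T/2,T)$ and both $\xi,\wi{\xi}$ are bounded below by a positive constant on $(0,T/2)$ (recall \eqref{12}), a bound on the $\wi{\xi}^{-3}$-weighted control integral yields the stated one with $\xi^{-3}$. Existence and uniqueness of a minimizer $(\v_{\varepsilon},\boldsymbol{\vartheta}_{\varepsilon})$ follow by convexity and the direct method as in Theorem \ref{thmn1}, once the infimum is known to be finite; finiteness is not automatic here (the weight $e^{-2s\wi{\psi}}$ blows up as $t\to T$), and is guaranteed by the coercivity furnished by the observability estimate below.

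Writing the first-order optimality condition, the new interior term forces the adjoint state $\vr$ to solve an \emph{inhomogeneous} version of \eqref{a12}, namely
\begin{equation*}
\mathcal{L}^{*}\vr+\nabla\pi=e^{-2s\wi{\psi}}\v_{\varepsilon}\ \text{ in }Q,\qquad \vr=\mathbf{0}\ \text{ on }\Sigma,\qquad \vr(T)=\tfrac1\varepsilon\v_{\varepsilon}(T).
\end{equation*}
Testing the equation for $\v_{\w}$ (see \eqref{a11}) against $\vr$ and using this adjoint system gives the duality identity $\tfrac1\varepsilon(\v_{\varepsilon}(T),\v_{\w}(T))+\int_{Q} e^{-2s\wi{\psi}}\v_{\varepsilon}\cdot\v_{\w}\d x\d t=\int_{Q_{\omega}}\vr\cdot\w\d x\d t$, which combined with the optimality condition identifies the control as $\boldsymbol{\vartheta}_{\varepsilon}=-\wi{\xi}^{3}e^{2s\wi{\psi}}\vr$ in $Q_{\omega}$. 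Multiplying \eqref{a5} by $\vr$ and integrating by parts in $Q$ then yields the energy identity
\begin{align*}
&\frac1\varepsilon\|\v_{\varepsilon}(T)\|_{\H}^2+\int_{Q} e^{-2s\wi{\psi}}|\v_{\varepsilon}|^2\d x\d t+\int_{Q_{\omega}}\frac{e^{-2s\wi{\psi}}}{\wi{\xi}^3}|\boldsymbol{\vartheta}_{\varepsilon}|^2\d x\d t\\
&\quad=(\v_0,\vr(0))-\sum_{i,j=1}^d\int_{Q}\g_{ij}\frac{\partial\vr_i}{\partial x_j}\d x\d t+\int_0^T\langle\h,\vr\rangle\d t,
\end{align*}
where I used that $\int_{Q_{\omega}}\boldsymbol{\vartheta}_{\varepsilon}\cdot\vr\d x\d t=-\int_{Q_{\omega}}\wi{\xi}^3e^{2s\wi{\psi}}|\vr|^2\d x\d t$ equals minus the weighted control term.

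To close the argument I would use the observability inequality for this inhomogeneous adjoint. Carrying the source through the derivation of \eqref{o23} (it enters the Stokes Carleman estimate \eqref{o19} via $\k$), the analogue of \eqref{o231} reads
\begin{align*}
&\|\vr(0)\|_{\H}^2+\int_{Q}\wi{\xi}^2e^{2s\wi{\psi}}|\vr|^2\d x\d t+\int_{Q} e^{2s\wi{\psi}}|\nabla\vr|^2\d x\d t\\
&\quad\le C\int_{Q_{\omega}}\wi{\xi}^3e^{2s\wi{\psi}}|\vr|^2\d x\d t+C\int_{Q} e^{2s\wi{\psi}}\big|e^{-2s\wi{\psi}}\v_{\varepsilon}\big|^2\d x\d t.
\end{align*}
The crucial point is the algebraic identity $e^{2s\wi{\psi}}(e^{-2s\wi{\psi}})^2=e^{-2s\wi{\psi}}$, so the source contribution equals $C\int_{Q} e^{-2s\wi{\psi}}|\v_{\varepsilon}|^2\d x\d t$, i.e. the interior cost term, while the first term on the right equals the weighted control term. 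Splitting each term of the energy identity by Young's inequality with a small parameter (writing $\g=e^{-s\wi{\psi}}\g\cdot e^{s\wi{\psi}}$, $\langle\h,\vr\rangle=\langle e^{-s\wi{\psi}}\h,e^{s\wi{\psi}}\vr\rangle$, and controlling $\|e^{s\wi{\psi}}\vr\|_{\V}$ by the left-hand side of the observability inequality since $|\nabla\wi{\psi}|\le C\lambda\wi{\xi}$), all the $\vr$-dependent terms are absorbed, leaving the $\varepsilon$-uniform bound
\begin{align*}
&\frac1\varepsilon\|\v_{\varepsilon}(T)\|_{\H}^2+\int_{Q} e^{-2s\wi{\psi}}|\v_{\varepsilon}|^2\d x\d t+\int_{Q_{\omega}}\frac{e^{-2s\wi{\psi}}}{\wi{\xi}^3}|\boldsymbol{\vartheta}_{\varepsilon}|^2\d x\d t\\
&\quad\le C\bigg(\|\v_0\|_{\H}^2+\int_{Q} e^{-2s\wi{\psi}}|\g|^2\d x\d t+\int_0^T\|e^{-s\wi{\psi}}\h\|_{\mathrm{H}^{-1}(\Omega)^d}^2\d t\bigg).
\end{align*}
I would then pass to the limit $\varepsilon\to0$: Banach--Alaoglu gives weak limits $\boldsymbol{\vartheta}_{\varepsilon}\rightharpoonup\boldsymbol{\vartheta}$ and $\v_{\varepsilon}\rightharpoonup\v$ in the relevant weighted spaces, $\v$ solves \eqref{a5} with control $\boldsymbol{\vartheta}$, the bound $\tfrac1\varepsilon\|\v_{\varepsilon}(T)\|_{\H}^2\le C$ forces $\v(\cdot,T)=\mathbf{0}$, and the two weighted integrals are finite by weak lower semicontinuity of the norms.

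The main obstacle is precisely the observability inequality for the \emph{inhomogeneous} adjoint system: Theorem \ref{thmo} is stated only for the homogeneous system \eqref{a12}, so I must re-run the Carleman argument of Section \ref{sec3} carrying the source $\mathbf{F}=e^{-2s\wi{\psi}}\v_{\varepsilon}$ through \eqref{o6}, the Stokes estimate \eqref{o19}, the a priori estimate \eqref{o232} and the passage to \eqref{o23}, and verify that the extra contribution is exactly of the absorbable form $C\int_{Q} e^{2s\wi{\psi}}|\mathbf{F}|^2\d x\d t$. The weighted energy estimates needed for this step, together with the finiteness of the penalized infimum (equivalently, the coercivity of the associated dual problem that produces an admissible control of finite weighted cost), are where the compatibility of the weights $e^{\pm 2s\wi{\psi}}$ and $\wi{\xi}^{\pm3}$ must be exploited with care.
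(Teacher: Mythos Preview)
Your approach is different from the paper's and, while the overall strategy is sound, the circularity you flag at the end is a genuine structural difficulty rather than a mere technicality. The paper does \emph{not} re-run a weighted penalty scheme; instead it works directly on the dual side via a Lax--Milgram argument. One introduces the bilinear form
\[
\mathfrak{B}\big((\breve{\v},\breve{q}),(\bar{\v},\bar{q})\big)=\int_{Q}e^{2s\wi{\psi}}(\mathcal{L}^{*}\breve{\v}+\nabla\breve{q})\cdot(\mathcal{L}^{*}\bar{\v}+\nabla\bar{q})\,\d x\d t+\int_{Q_{\omega}}\wi{\xi}^{3}e^{2s\wi{\psi}}\breve{\v}\cdot\bar{\v}\,\d x\d t
\]
on smooth divergence-free pairs $(\bar{\v},\bar{q})$, observes that the Carleman/observability inequality \eqref{o231} makes $\mathfrak{B}$ a scalar product, completes the space, and checks that the linear form $\langle\mathfrak{L},(\bar{\v},\bar{q})\rangle=(\v_0,\bar{\v}(0))-\int_Q\g:\nabla\bar{\v}+\int_0^T\langle\h,\bar{\v}\rangle$ is continuous for this norm (again by \eqref{o231}). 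Lax--Milgram then produces $(\breve{\v},\breve{q})$, and one \emph{defines}
\[
\v:=e^{2s\wi{\psi}}(\mathcal{L}^{*}\breve{\v}+\nabla\breve{q}),\qquad \boldsymbol{\vartheta}:=-\wi{\xi}^{3}e^{2s\wi{\psi}}\mathds{1}_{\omega}\breve{\v},
\]
so that the weighted bounds $\int_Q e^{-2s\wi{\psi}}|\v|^2+\int_{Q_\omega}\wi{\xi}^{-3}e^{-2s\wi{\psi}}|\boldsymbol{\vartheta}|^2=\mathfrak{B}((\breve{\v},\breve{q}),(\breve{\v},\breve{q}))<+\infty$ are built in from the start; a transposition argument then identifies $\v$ with the weak solution of \eqref{a5}.

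Compared with your scheme, this buys exactly the two things you single out as obstacles: no inhomogeneous adjoint observability is needed (only the homogeneous one already proved in Theorem \ref{thmo}), and there is no issue of finiteness of a weighted primal infimum, since one never minimizes on the primal side. Your penalty approach can be pushed through, but making it rigorous typically requires either truncating the weight near $t=T$ and passing to the limit, or rephrasing the minimization on the adjoint variable---and the latter is precisely the Lax--Milgram route the paper takes. In short, your plan and the paper's are two faces of the same duality, but the paper's formulation is the one that closes without extra approximation arguments.
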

	\begin{proof}
		Let us remark that we adapt the ideas from \cite{CGIP,SG,Puel} to prove our result.
%		\iffalse
%		Let us introduce the optimal control problem:
%		\begin{align}\label{op}
%			\left\{\begin{array}{cl} &\inf \frac{1}{2}\bigg(\int_{Q}e^{2 s\wi{\psi}}|\v|^2\d x \d t +\int_{Q_{\omega}}e^{2 s\wi{\psi}} \wi{\xi}^3 |\boldsymbol{\vartheta}|^2 \d x\d t\bigg)\\
%				&\text{subject to} \ \  \boldsymbol{\vartheta} \in \mathrm{L}^2(Q_{\omega})^d, \ \ \text{and}\\
%				& \left\{\begin{array}{cc}
%					&\mathcal{L}\v +\nabla q =\nabla \cdot \g+\h+\boldsymbol{\vartheta} \mathds{1}_{\omega}, \ \text{ in } Q,  \\ &\nabla\cdot\v=0, \ \text{ in }  Q, \\
%					&\v=\mathbf{0},  \ \text{ on } \Sigma, \\
%					&\v(0)=\v_0, \ \ \ \v(T)=\mathbf{0}, \  \text{ in }  \Omega,
%				\end{array}\right.\end{array}\right.
%		\end{align}
%		Let us assume that we have a solution $(	\breve{\v},	\breve{q},	\breve{\boldsymbol{\vartheta}})$ of \eqref{op}. Then, in view of Lagrange's principle, there exists dual variables $(\v,q)$ such that
%		\begin{equation}\label{o29}
%			\left\{
%			\begin{aligned}
%				\breve{\v}&=e^{2 s\wi{\psi}}(L^{*}\v+\nabla q), \ \ \ \  \nabla \cdot 	\breve{\v}=0, \ \text{ in } Q,\\
%				\breve{\boldsymbol{\vartheta}}&=-e^{2 s\wi{\psi}} \wi{\xi}^3 \v, \  \text{ in } Q_\omega,\\
%				\breve{\v}&=\mathbf{0} \ \text{ on } \Sigma,
%			\end{aligned}
%			\right.
%		\end{equation}
%		where $\mathcal{L}^*$ is the adjoint operator of $\mathcal{L}$ (cf. \eqref{a121} and \eqref{a31}).
%		\fi
		Let us define
		\begin{align*}
			\mathcal{X}=\bigg\{ (\bar{\v},\bar{q}) \in \C^{\infty}(\overline{Q})^{d+1}, \  \ \nabla \cdot \bar{\v}=0, \ \text{in} \ Q, \ \bar{\v}=\mathbf{0}, \ \text{on} \ \Sigma, \ \int_{\omega}\bar{q}(t) \d x =0, \  \text{ a.e.} \ \text{in} \ (0,T)\bigg\}.
		\end{align*}
		Let us consider the  bilinear form $\mathfrak{B}: \mathcal{X} \times \mathcal{X} \rightarrow \R$ given by
		\begin{align}\label{B}
			\mathfrak{B}	\big((\breve{\v},\breve{q}),(\bar{\v},\bar{q})\big)&=\int_{Q}e^{2 s\wi{\psi}}(\mathcal{L}^{*}\breve{\v}+\nabla \breve{q})\cdot(\mathcal{L}^{*}\bar{\v}+\nabla \bar{q}) \d x\d t\no\\&\quad+\int_{Q_{\omega}}e^{2 s\wi{\psi}} \wi{\xi}^3\breve{\v} \cdot \bar{\v} \d x\d t, \ \ \text{for all} \  (\bar{\v},\bar{q}) \in \mathcal{X},
		\end{align}
		where $\mathcal{L}^*$ is the adjoint operator of $\mathcal{L}$ (cf. \eqref{a121} and \eqref{a31}).
		
		%Then, we must have
		%\begin{align}\label{vr}
		%%	a\big((\v,q),(\bar{\v},\bar{q})\big)=\langle l,(\bar{\v},\bar{q}), \ \forall \ (\bar{\v},\bar{q}) \in {\mathcal{X}}.
		%\end{align}
		%Now, we will show that there exists exactly one $({\v},{q})$ verifying \eqref{vr} in an appropriate space. 
		%We will then define $\breve{\v}$ and $\breve{q}$ using \eqref{o29} and we will check that $(\breve{\v},\breve{\boldsymbol{\vartheta}})$, together with certain pressure $\breve{q}$, fulfills the desired properties.
		Since $\wi{\u} \in \mathrm{L}^\infty(Q)^d$, because of the Carleman estimate \eqref{o19}, we can see that $\mathfrak{B}(\cdot,\cdot)$ is a scalar product on $\mathcal{X}$. Let us define $\overline{\mathcal{X}}$ as the completion of $\mathcal{X}$ for the norm associated to scalar product $\mathfrak{B}(\cdot,\cdot)$ (which we denote by $\| \cdot \|_{\overline{\mathcal{X}}}$). Then, $\overline{\mathcal{X}}$ is a Hilbert space for this scalar product. Thus, the bilinear form  $\mathfrak{B}(\cdot,\cdot)$ is well-defined, continuous, and coercive on $\overline{\mathcal{X}}$, and observe that the observability inequality \eqref{o231} holds for all $(\bar{\v},\bar{q}) \in \overline{\mathcal{X}}$, that is,
		\begin{align}\label{o27}
			\|\bar{\v}(0)\|_{\H}^2+\int_{Q}e^{2s\wi{\psi}}\big(|\nabla \bar{\v}|^2+\wi{\xi}^2 |\bar{\v}|^2 \big)\d x \d t
			&\leq C\int_{Q_{\omega}}e^{2 s\wi{\psi}} \wi{\xi}^3|\bar{\v}|^2 \d x\d t \no\\&
			 \leq C \mathfrak{B}\big((\bar{\v},\bar{q}),(\bar{\v},\bar{q})\big), \ \   \text{for all} \ (\bar{\v},\bar{q}) \in \overline{\mathcal{X}}.
		\end{align} 
%		\iffalse
%		Multiply both sides by $\bar{\v}$ in the first equation \eqref{a5} and then simplify it, we compute
%		\begin{align*}
%			\frac{\d}{\d t}(\v,\bar{\v})+\big(\v,	\mathcal{L}^*\bar{\v} \big)=\big(\nabla \cdot \g+\boldsymbol{\vartheta}_{\varepsilon} \mathds{1}_{\omega},\bar{\v}\big)+\langle\h,\bar{\v} \rangle.
%		\end{align*}
%		Integrating the above equation with respect to time from $0$ to $T$, we get 
%		\begin{align}\label{1e}
%			&\big(\v(T),\bar{\v}(T)\big)+\int_{Q}	\v \cdot\mathcal{L}^*\bar{\v} \d x \d t-\int_{Q_{\omega}}\boldsymbol{\vartheta}\cdot \bar{\v} \d x\d t \no\\&=\big(\v_0,\bar{\v}(0)\big)-\sum_{i,j=1}^d \int_{Q}\g_{ij}\frac{\partial \bar{\v}_{i}}{\partial x_{j}} \d x\d t+\int_{0}^T\langle\h,\bar{\v} \rangle \d t.
%		\end{align}
%		\fi
		Let us now consider the  linear form $\mathfrak{L}:\overline{\mathcal{X}} \rightarrow\R$ defined by
		\begin{align}\label{lf}
			\langle	\mathfrak{L},(\bar{\v},\bar{q}) \rangle =(\v_{0},\bar{\v}(0))_{\H}-\sum_{i,j=1}^d \int_{Q}\g_{ij}\frac{\partial\bar{\v}_{i}}{ \partial x_{j}} \d x\d t+\int_{0}^T\langle\h , \bar{\v}\rangle \d t, \ \ \text{for all} \ (\bar{\v},\bar{q}) \in \overline{\mathcal{X}},
		\end{align}
		with
		\begin{align}\label{L}
			|\langle	\mathfrak{L},(\bar{\v},\bar{q}) \rangle|& \leq \|\v_0\|_{\H}\|\bar{\v}(0)\|_{\H}+\big\|e^{-s \wi{\psi}} \g \big\|_{\mathrm{L}^2(Q)^{d^{2}}} \big\|e^{s \wi{\psi}} \bar{\v} \big\|_{\mathrm{L}^2(0,T;\V)}\no\\&\quad+\big\|e^{-s \wi{\psi}}  \h \big\|_{\mathrm{L}^2(0,T;\mathrm{H}^{-1}(\Omega)^d)} \big\|e^{s \wi{\psi}}  \bar{\v} \big\|_{\mathrm{L}^2(0,T;\V)}\no\\& \leq C\left(\|\v_0\|_{\H}+\big\|e^{-s \wi{\psi}} \g \big\|_{\mathrm{L}^2(Q)^{d^{2}}} +\big\|e^{-s \wi{\psi}}  \h \big\|_{\mathrm{L}^2(0,T;\mathrm{H}^{-1}(\Omega)^d)} \right)\|(\bar{\v},\bar{q})\|_{\overline{\mathcal{X}}},
		\end{align}
		for all $(\bar{\v},\bar{q}) \in \overline{\mathcal{X}}$ and we have used the estimate \eqref{o27}. 	Therefore, we can see that the linear form $\langle\mathfrak{L},(\cdot)\rangle$ is well-defined and continuous  on $\overline{\mathcal{X}}$. Consequently, in view of Lax-Milgram Theorem (see  \cite[Chapter 6]{Evans}), there exists a unique solution $(\breve{\v},\breve{q}) \in \overline{\mathcal{X}}$ of the variational  problem
		\begin{align}\label{vr1}
			\mathfrak{B}	\big((\breve{\v},\breve{q}),(\bar{\v},\bar{q})\big)=\langle	\mathfrak{L},(\bar{\v},\bar{q}) \rangle, \ \ \text{for all} \  (\bar{\v},\bar{q}) \in \overline{\mathcal{X}}. 
		\end{align}
		Let us define
		\begin{equation}\label{sol}
			{\v}=e^{2 s\wi{\psi}}(L^{*}\breve{\v}+\nabla \breve{q}), \ \ \ \text{and} \ \ \ 
			{\boldsymbol{\vartheta}}=-e^{2 s\wi{\psi}} \wi{\xi}^3 \mathds{1}_{\omega}\breve{\v},
		\end{equation}
		and one can  observe that $(\v,\boldsymbol{\vartheta})$ verifies
		\begin{align*}
			\int_{Q} e^{-2 s\wi{\psi}}|{\v}|^2 \d x \d t+	\int_{Q_{\omega}}\frac{e^{-2 s\wi{\psi}}}{\xi^3}|{\boldsymbol{\vartheta}}|^2 \d x \d t  < +\infty
		\end{align*}
		and  $\v$  is solution of the system in \eqref{a5} for certain pressure $q$.
		
		Note that, $	{\v} \in \mathrm{L}^2(Q)^d$ and  $ 	{\boldsymbol{\vartheta}} \in  \mathrm{L}^2(Q_{\omega})^d$. Then, we introduce the weak solution $(\check{\v},\check{q})$ to the system  
		\begin{equation}\label{cv2}
			\left\{
			\begin{aligned}
				\mathcal{L}\check{\v}+\nabla \check{q}&=\nabla \cdot \g+\h+	{\boldsymbol{\vartheta}} \mathds{1}_{\omega}, \  \text{ in } Q, \\ \nabla\cdot\check{\v}&=0,  \ \text{ in } Q,\\
				\check{\v}&=\mathbf{0},  \ \text{ on }  \Sigma,\\
				\check{\v}(0)&=\v_0,  \text{ in }  \Omega.
			\end{aligned}
			\right.
		\end{equation}
		Indeed, $\check{\v}$ is also the unique solution of \eqref{cv2} defined by transposition. Of course, this says that $\check{\v} \in \mathrm{L}^2(Q)^d$ is the unique function verifying
		\begin{align}\label{o30}
			\int_{Q}\check{\v} \cdot \bar{\boldsymbol{b}} \d x\d t&=(\v_{0},\bar{\v}(0))_{\H}-\sum_{i,j=1}^d \int_{Q}\g_{ij}\frac{\partial\bar{\v}_{i}}{ \partial x_{j}} \d x\d t\no \\&\quad+\int_{0}^T \langle\h, \bar{\v} \rangle \d t+\int_{Q_{\omega}}{\boldsymbol{\vartheta}} \cdot \bar{\v} \d x\d t, \ \ \text{ for all } \ \bar{\boldsymbol{b}} \in \mathrm{L}^2(Q)^d.
		\end{align}
		Here $\bar{\v}$, together with some pressure $\bar{q}$, is the solution of the following system
		\begin{equation*}
			\left\{
			\begin{aligned}
				\mathcal{L}\bar{\v}+\nabla \bar{q}&=\bar{\boldsymbol{b}}, \  \text{ in } Q, \\ \nabla\cdot\bar{\v}&=0,  \ \text{ in } Q,\\
				\bar{\v}&=\mathbf{0},  \ \text{ on }  \Sigma,\\
				\bar{\v}(T)&=\mathbf{0},  \text{ in }  \Omega.
			\end{aligned}
			\right.
		\end{equation*}
		\iffalse
		is the unique solution defined by transposition of the problem
		But as $\v_{0}, \ \g \in \mathrm{L}^2(Q)^{d^2}$ and  $ 	{\boldsymbol{\vartheta}} \in  \mathrm{L}^2(Q_{\omega})^d$, the problem \eqref{cv2} has a unique  solution with $\v \in \C([0,T];\H) \cap \mathrm{L}^2(0,T;\V)$ (cf. Theorem \ref{thmwell}).
		
		Substitute \eqref{sol} in \eqref{vr1}, we obtain
		\begin{align}\label{1e1}
			\int_{Q}	\u \cdot\mathcal{L}^*\bar{\v} \d x \d t-\int_{Q_{\omega}}\boldsymbol{\vartheta}\cdot \bar{\v} \d x\d t=\big(\v_0,\bar{\v}(0)\big)-\sum_{i,j=1}^d \int_{Q}\g_{ij}\frac{\partial \bar{\v}_{i}}{\partial x_{j}} \d x\d t+\int_{0}^T\langle\h,\bar{\v} \rangle \d t.
		\end{align}
		Combining \eqref{1e} and \eqref{1e1}, we infer that
		\begin{align*}
			\int_{\Omega} \v(x,T)\cdot\bar{\v}(x,T) \d x=0, \ \ \ \text{for all} \ \bar{\v} \in \mathcal{X},
		\end{align*} 
		and this implies that $\v(x,T)=\mathbf{0}$.
		\fi
		From the relations \eqref{sol} and \eqref{vr1}, one can easily get that $\v$ also verifies \eqref{o30}. Consequently, $\v=\check{\v}$ and $\v$ is, together with a certain pressure $q=\check{q}$,  the solution of the system \eqref{a5}.
		
		On the other hand, by virtue of \eqref{B} and \eqref{L}-\eqref{sol}, it follows
		\begin{align*}
			&\int_{Q} e^{-2 s\wi{\psi}}|{\v}|^2 \d x \d t+	\int_{Q_{\omega}}\frac{e^{-2 s\wi{\psi}}}{\xi^3}|{\boldsymbol{\vartheta}}|^2 \d x \d t \\&
			=\int_{Q}e^{2s\wi{\psi}}|\mathcal{L}^{*}\breve{\v}+\nabla \breve{q}|^2 \d x\d t+\int_{Q_{\omega}}e^{2 s\wi{\psi}} \wi{\xi}^3 |\breve{\v}|^2 \d x \d t
			\\& \leq  C\left(\|\v_0\|_{\H}+\big\|e^{-s \wi{\psi}} \g \big\|_{\mathrm{L}^2(Q)^{d^{2}}} +\big\|e^{-s \wi{\psi}}  \h \big\|_{\mathrm{L}^2(0,T;\mathrm{H}^{-1}(\Omega)^d)} \right).
		\end{align*}
		Obviously, this shows that $\v(\cdot,T)=\mathbf{0}$ and that  ${\v}$ and ${\boldsymbol{\vartheta}}$ decrease exponentially as $t$ approaches $T$ and it completes the proof.
	\end{proof}

	\section{The Nonlinear Problem}\label{sec5}\setcounter{equation}{0}
	In this section, we provide the proof of our main result, Theorem \ref{thmm}, using similar arguments to those employed in \cite{CGIP,Puel}, and references therein. Moreover, we see that the results obtained in the previous section allow us to locally invert a nonlinear system. We deduce some regularity results, which will be sufficient for us to apply a suitable inverse mapping theorem (see Theorem \ref{thmnl}) to reach our main goal. 
	
	In the context of the local exact controllability to the trajectories of the CBF equations, we adapt some ideas of the proofs from the works \cite{CGIP,Puel}, where the local exact controllability to the trajectories is established for the Navier-Stokes system in both two and three dimensions.
%	\iffalse
%	Subsequently, discussion of the local exact controllability to the trajectories of the CBF equations is based on the work \cite{CGIP,FIm,Im3,Im1,Puel}, in which the authors proved the local exact controllability to the trajectories of the Navier-Stokes system in both two and three dimensions.
%	\fi
	\subsection{Choice of special weights} 
	From now on, we omit the notation $ \wi \  $ for the weight functions $\wi{\psi}$ and $\wi{\xi}$ defined in \eqref{21}. We still have some choice for the constants $m_1$ and $m_2$ in the weights ${\psi}$ and ${\xi}$. To do this, first, for $ t \in [T/2,T]$ (the case $t \in [0,T/2]$ is straightforward), we define 
	\begin{align*}
		\check{\psi}(t)&=\min_{x \in \overline{\Omega}}\psi(x,t)=\frac{e^{\lambda m_1}-e^{\lambda (\|\eta\|_{\mathrm{L}^\infty}+m_{2})}}{\ell^{4}(t)}  <0,\\
		\check{\xi}(t)&=\min_{x \in \overline{\Omega}}\xi(x,t)=\frac{e^{\lambda m_1}}{\ell^{4}(t)},\\
		\hat{\psi}(t)&=\max_{x \in \overline{\Omega}}\psi(x,t)=\frac{e^{\lambda (\|\eta\|_{\mathrm{L}^\infty}+m_{1})}-e^{\lambda (\|\eta\|_{\mathrm{L}^\infty}+m_{2})}}{\ell^{4}(t)}  <0,\\
		\hat{\xi}(t)&=\max_{x \in \overline{\Omega}}\xi(x,t)=\frac{e^{\lambda (\|\eta\|_{\mathrm{L}^\infty}+m_{1})}}{\ell^{4}(t)}.
	\end{align*}
	Then, we have the following result for choosing the constants given in \cite{Puel}.
	\begin{lemma}[Lemma 4.1, \cite{Puel}]\label{lemmacc}
		We can choose the constants $m_1$ and $m_2$ with $m_1 \leq m_2$ such that for $\lambda_0$ large enough, we have for $\lambda \geq \lambda_0$
		\begin{align*}
			\bigg|\frac{\partial \psi}{\partial t}\bigg| \leq C \xi^\frac{5}{4}, \ \ \ \text{and} \ \ \ \bigg|\frac{\partial^{2} \psi}{\partial t^2}\bigg| \leq C \xi^\frac{3}{2},
		\end{align*}
		and \begin{align*}
			\frac{3}{2} \hat{\psi} \leq \check{\psi} \ \ \ \text{or} \ \ \ -\check{\psi} \leq -\frac{3}{2} \hat{\psi}.
		\end{align*}
	\end{lemma}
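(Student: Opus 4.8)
The plan is to verify the two families of derivative bounds by direct differentiation and then to exhibit an explicit admissible pair $(m_1,m_2)$. Writing $\xi(x,t)=C_1(x)/\ell^4(t)$ with $C_1(x)=e^{\lambda(\eta(x)+m_1)}$ and $\psi(x,t)=\xi(x,t)-C_2/\ell^4(t)$ with the \emph{constant} $C_2=e^{\lambda(\|\eta\|_{\mathrm{L}^\infty}+m_2)}$, one computes directly that $\partial_t\psi=-4(\ell'/\ell)\psi$, and differentiating once more, $\partial_t^2\psi=\big(-4\ell''/\ell+20(\ell')^2/\ell^2\big)\psi$. The only analytic input about $\ell$ I would use is that it is smooth with $|\ell'|=1$ and $\ell''=0$ on $[0,T/4]\cup[3T/4,T]$ and $\ell\ge T/4$ on $[T/4,3T/4]$; these give $|\ell'|/\ell\le C/\ell$ and $|\ell''|/\ell+(\ell')^2/\ell^2\le C/\ell^2$ on all of $[0,T]$. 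Since $\eta\ge0$ and $m_1\le m_2$ force $0\le C_1\le C_2$, we have $|\psi|=(C_2-C_1)/\ell^4\le C_2/\ell^4$, whence $|\partial_t\psi|\le C\,C_2/\ell^5$ and $|\partial_t^2\psi|\le C\,C_2/\ell^6$.

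Comparing with $\xi^{5/4}=C_1^{5/4}/\ell^5$ and $\xi^{3/2}=C_1^{3/2}/\ell^6$, the powers of $\ell$ cancel and the two desired bounds reduce to requiring that $C_2/C_1^{5/4}$ and $C_2/C_1^{3/2}$ stay bounded uniformly in $x\in\overline\Omega$ and in $\lambda\ge\lambda_0$. Now $C_2/C_1^{5/4}=e^{\lambda[(\|\eta\|_{\mathrm{L}^\infty}+m_2)-\frac54(\eta+m_1)]}$, and since the exponent decreases in $\eta\in[0,\|\eta\|_{\mathrm{L}^\infty}]$, its worst case is attained on $\partial\Omega$ where $\eta=0$. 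Hence both quotients remain bounded (indeed $\le1$) as soon as $\frac54 m_1\ge\|\eta\|_{\mathrm{L}^\infty}+m_2$; this single inequality also controls $C_2/C_1^{3/2}$, because $\frac32>\frac54$. This is precisely where the choice of $m_1,m_2$ enters.

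For the last assertion I would argue algebraically: multiplying $\frac32\hat\psi\le\check\psi$ by $\ell^4>0$ and rearranging reduces it to $\frac32 e^{\lambda(\|\eta\|_{\mathrm{L}^\infty}+m_1)}-e^{\lambda m_1}\le\frac12 e^{\lambda(\|\eta\|_{\mathrm{L}^\infty}+m_2)}$; dropping the harmless term $-e^{\lambda m_1}\le0$, it suffices that $3\le e^{\lambda(m_2-m_1)}$, i.e. $\lambda(m_2-m_1)\ge\ln 3$. The remaining task — and the main obstacle — is to reconcile $m_1\le m_2$ (needed throughout, and strictly $m_1<m_2$ for the last inequality) with $\frac54 m_1\ge\|\eta\|_{\mathrm{L}^\infty}+m_2$; together these force $m_1\ge4\|\eta\|_{\mathrm{L}^\infty}$, and the slack provided by $\frac54>1$ is exactly what makes the window nonempty. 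Concretely I would take $m_1=4\|\eta\|_{\mathrm{L}^\infty}+1$ and $m_2=4\|\eta\|_{\mathrm{L}^\infty}+\frac98$, which satisfy $m_1<m_2$ and $\frac54 m_1=5\|\eta\|_{\mathrm{L}^\infty}+\frac54\ge5\|\eta\|_{\mathrm{L}^\infty}+\frac98=\|\eta\|_{\mathrm{L}^\infty}+m_2$, and then fix any $\lambda_0\ge\ln 3/(m_2-m_1)$. For $\lambda\ge\lambda_0$ all three conclusions hold with a constant $C$ independent of $\lambda$ and $x$, completing the proof.
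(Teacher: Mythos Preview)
Your proof is correct and follows essentially the same route as the paper's. Both arguments reduce the derivative bounds to the algebraic constraint $\frac54 m_1\ge \|\eta\|_{\mathrm{L}^\infty}+m_2$ (which subsumes the $\xi^{3/2}$ bound once $m_1>0$), combine it with $m_1<m_2$ needed for the last inequality, observe the resulting window is nonempty, and then absorb the remaining condition $\lambda(m_2-m_1)\ge\ln 3$ into the choice of $\lambda_0$. The paper merely states the parametrization $m_1=(m_0+4)\|\eta\|_{\mathrm{L}^\infty}$, $m_2=m_3\|\eta\|_{\mathrm{L}^\infty}$ with $m_0+4<m_3<\frac54 m_0+4$ and refers to \cite{Puel} for the details; your computation with the explicit pair $m_1=4\|\eta\|_{\mathrm{L}^\infty}+1$, $m_2=4\|\eta\|_{\mathrm{L}^\infty}+\frac98$ is simply a more self-contained version of the same argument.
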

	\begin{proof} Proof follows in the same lines as in the proof of  \cite[Lemma 4.1]{Puel}.
		Let us take 
		\begin{align*}
			m_1=(m_0+4)\| \eta\|_{\mathrm{L}^\infty} \ \ \ \text{and} \ \ \ m_2=m_3\| \eta\|_{\mathrm{L}^\infty}.
		\end{align*}
		It is easy to see that all the requirements are fulfilled if
		\begin{align*}
			m_0+4 < m_3 <\frac{5}{4}m_0+4,
		\end{align*}
		for $\lambda_0$ sufficiently large. Now, such a choice of $m_0$ and $m_3$ is  possible.
	\end{proof}
	\subsection{Functional class and some regularity results}\label{sub5.2}
	To apply an inverse mapping argument and extract some controllability properties for the nonlinear system \eqref{a1}, suitable spaces  are needed. We must define the correct functional class to apply the inverse mapping theorem. To do this, let us define the weighted Banach spaces $(\mathcal{E}_{d},\| \cdot \|_{\mathcal{E}_d}), \ (d=2,3)$ as follows:
	\begin{align*}
		\mathcal{E}_{2}&=\bigg\{\big(\v,\boldsymbol{\vartheta} \big): \big(\v,\boldsymbol{\vartheta} \big) \in \mathcal{E}_0, e^{-  \frac{3s}{4}\hat{\psi}} \v \in  \mathrm{L}^6(0,T;\mathrm{L}^4(\Omega)^2), \\&\qquad
		\exists \ q, \bar{\g}, \bar{\h}:  e^{- s{\psi}} ( \nabla \cdot\bar{\g}+\bar{\h}) \in \mathrm{L}^2(0,T;\mathrm{H}^{-1}(\Omega)^{2}),  \\&\qquad \mathcal{L}\v+\nabla q-\boldsymbol{\vartheta} \mathds{1}_{\omega}=\nabla \cdot \bar{\g}+\bar{\h}, \ \ \v(0) \in \H \bigg\},
	\end{align*}
	when $d=2$ and 
	\begin{align*}
		\mathcal{E}_{3}&=\bigg\{\big(\v,\boldsymbol{\vartheta} \big): \big(\v,\boldsymbol{\vartheta} \big) \in \mathcal{E}_0,   e^{-  \frac{3s}{4}\hat{\psi}} \v \in  \mathrm{L}^4(0,T;\mathrm{L}^{12}(\Omega)^3) \cap \mathrm{L}^6(Q)^3,  \\&\qquad
		\exists \ q,\bar{\g}, \bar{\h}: e^{- s{\psi}} (\nabla \cdot\bar{\g}+\bar{\h}) \in \mathrm{L}^2(0,T;\mathrm{W}^{-1,6}(\Omega)^{3}),  \\&\qquad \mathcal{L}\v+\nabla q-\boldsymbol{\vartheta} \mathds{1}_{\omega}=\nabla \cdot \bar{\g}+\bar{\h}, \ \ \v(0) \in \H \cap \mathrm{L}^4(\Omega)^3 \bigg\},
	\end{align*}
	when $d=3$,	where
	\begin{align*}
		\mathcal{E}_0&=\bigg\{\big(\v,\boldsymbol{\vartheta} \big):e^{- s{\psi}} \v \in \mathrm{L}^2(Q)^d, \ \ \xi^{-\frac{3}{2}}e^{- s{\psi}}\boldsymbol{\vartheta} \in \mathrm{L}^2(Q_{\omega})^d, \\&\qquad  e^{-  \frac{3s}{4}\hat{\psi}} \v \in  \mathrm{L}^\infty(0,T;\H) \cap\mathrm{L}^2(0,T;\V) \bigg\},
	\end{align*}
	endowed with the norm
	\begin{align*}
		\|(\v,\boldsymbol{\vartheta})\|_{\mathcal{E}_2}^2&=\left\|e^{- s{\psi}} \v \right\|_{\mathrm{L}^2(Q)^2}^2+\big\|\xi^{-\frac{3}{2}}e^{- s{\psi}}\boldsymbol{\vartheta}\big\|_{\mathrm{L}^2(Q_{\omega})^2}^2 +\big\| e^{-  \frac{3s}{4}\hat{\psi}} \v \big\|_{\mathrm{L}^6(0,T;\mathrm{L}^4(\Omega)^2)}^2 \\&\quad +\big\|e^{-  \frac{3s}{4}\hat{\psi}} \v \big\|_{\mathrm{L}^\infty(0,T;\H) \cap\mathrm{L}^2(0,T;\V)}^2 +\left\| e^{- s{\psi}} \nabla \cdot\bar{\g} \right\|_{\mathrm{L}^2(0,T;\mathrm{H}^{-1}(\Omega)^2)}^2 \\&\quad+\left\|e^{- s{\psi}} \bar{\h} \right\|_{\mathrm{L}^2(0,T;\mathrm{H}^{-1}(\Omega)^{2})}^2+\|\v(0)\|_{\H}^2,
	\end{align*}
	when $d=2$ and 
	\begin{align*}
		\|(\v,\boldsymbol{\vartheta})\|_{\mathcal{E}_3}^2&=\left\|e^{- s{\psi}} \v \right\|_{\mathrm{L}^2(Q)^{3}}^2+\big\|\xi^{-\frac{3}{2}}e^{- s{\psi}}\boldsymbol{\vartheta}\big\|_{\mathrm{L}^2(Q_{\omega})^3}^2 
		+\big\|e^{-  \frac{3s}{4}\hat{\psi}} \v \big\|_{\mathrm{L}^\infty(0,T;\H) \cap\mathrm{L}^2(0,T;\V)}^2
		\\&\quad +\big\| e^{-  \frac{3s}{4}\hat{\psi}} \v \big\|_{\mathrm{L}^6(Q)^3}^2 +\big\|e^{- \frac{3s}{4}\hat{\psi}} \v \big\|_{\mathrm{L}^4(0,T;\mathrm{L}^{12}(\Omega)^3)}^2 \\& \quad +\left\| e^{- s{\psi}} \nabla \cdot \bar{\g} \right\|_{\mathrm{L}^2(0,T;\mathrm{W}^{-1,6}(\Omega)^3)}^2+\left\|e^{- s{\psi}} \bar{\h} \right\|_{\mathrm{L}^2(0,T;\mathrm{W}^{-1,6}(\Omega)^{3})}^2+\|\v(0)\|_{\mathrm{L}^4(\Omega)^3}^2,
	\end{align*}
	when $d=3$. Here, the operator $\mathcal{L}\v$ is defined in \eqref{a31}. On the other hand, we also define the weighted Banach spaces $(\mathcal{G}_{d},\| \cdot \|_{\mathcal{G}_d})$ for  $d=2,3$ as:
	\begin{align*}
		\mathcal{G}_2= \bigg\{ \big(\nabla \cdot \bar{\g}+\bar{\h},\v_0\big): \  e^{- s{\psi}} (  \nabla \cdot\bar{\g}+\bar{\h}) \in \mathrm{L}^2(0,T;\mathrm{H}^{-1}(\Omega)^{2}), \  \ \v_0 \in \H\bigg\},
	\end{align*}
	and 
	\begin{align*}
		\mathcal{G}_3=\bigg\{	\big(\nabla \cdot \bar{\g}+\bar{\h},\v_0\big): \ e^{- s{\psi}} (\nabla \cdot\bar{\g}+\bar{\h}) \in \mathrm{L}^2(0,T;\mathrm{W}^{-1,6}(\Omega)^{3}), \ \ \v_0 \in \H \cap \mathrm{L}^4(\Omega)^3\bigg\},
	\end{align*}
	endowed with the norm
	\begin{align*}
		\|(\nabla \cdot \bar{\g}+\bar{\h},\v_0)\|_{\mathcal{G}_2}^2=\left\|e^{- s{\psi}} \nabla \cdot \bar{\g} \right\|_{\mathrm{L}^2(0,T;\mathrm{H}^{-1}(\Omega)^{2})}^2+\| e^{- s{\psi}} \bar{\h}\|_{\mathrm{L}^2(0,T;\mathrm{H}^{-1}(\Omega)^2)}^2+\|\v_0\|_{\H}^2,
	\end{align*}
	and
	\begin{align*}
		\|(\nabla \cdot \bar{\g}+\bar{\h},\v_0)\|_{\mathcal{G}_3}^2=\left\|e^{- s{\psi}} \nabla \cdot \bar{\g} \right\|_{\mathrm{L}^2(0,T;\mathrm{W}^{-1,6}(\Omega)^{3})}^2+\| e^{- s{\psi}} \bar{\h}\|_{\mathrm{L}^2(0,T;\mathrm{W}^{-1,6}(\Omega)^3)}^2+\|\v_0\|_{\mathrm{L}^4(\Omega)^3}^2.
	\end{align*}

	\begin{remark}
		The spaces denoted as $\mathcal{E}_j \ (j=0,2,3)$ represent the natural spaces, where solutions of the null controllability of \eqref{a5} must be found in order to preserve these properties for the nonlinear terms $(\u \cdot \nabla)\u$ and $|\u|^2\u$. Additional information will be provided  in subsection \ref{subsec 5.3}.
	\end{remark}
	
	%In our setting, we will  use  the spaces $\mathcal{E}=\mathcal{E}_{d}$ and $\mathcal{G}=\mathcal{G}_{d}$ for $d=2,3$.
	Given any $(\nabla \cdot \g^*+\h^*, \v_0) \in \mathcal{G}$, we know from Theorem \ref{thmE} that $(\v,\boldsymbol{\vartheta}^*)$ solves the system:
	\begin{equation}\label{cv3}
		\left\{
		\begin{aligned}
			\mathcal{L}\v+\nabla q&=\nabla \cdot \g^*+\h^*+\boldsymbol{\vartheta}^* \mathds{1}_{\omega}, \ \text{ in } Q, \\ \nabla\cdot\v&=0,  \ \text{ in } Q,\\
			\v&=\mathbf{0},  \ \text{ on }  \Sigma,\\
			\v(0)&=\v_0, \text{ in }  \Omega,
		\end{aligned}
		\right.
	\end{equation}
	with $$\v(\cdot,T)=\mathbf{0},$$ and 
	\begin{align*}
		\v \in  \mathrm{L}^\infty(0,T;;\H) \cap \mathrm{L}^2(0,T;\V), \ \ \ \ 
		e^{-s\psi}\v \in \mathrm{L}^2(Q)^d,  \ \ \ \text{and} \ \ \ \xi^{-\frac{3}{2}}{e^{-s\psi}} \boldsymbol{\vartheta}^* \in \mathrm{L}^2(Q_{\omega})^{d}.
	\end{align*}
Let us prove that $(\v,\boldsymbol{\vartheta}^*) \in \mathcal{E}$.	It remains to prove that
	\begin{itemize}
		\item  $e^{-\frac{3s}{4}\hat{\psi}}\v \in   \mathrm{L}^\infty(0,T;\H) \cap\mathrm{L}^2(0,T;\V),$ \ \ in dimension $d=2,3$,
		\item  $e^{-\frac{3s}{4}\hat{\psi}}\v \in  \mathrm{L}^4(0,T;\mathrm{L}^{12}(\Omega)^3) \cap \mathrm{L}^6(Q)^3$, \ \ in dimension $d=3$,
		\item  $e^{-\frac{3s}{4}\hat{\psi}}\v \in  \mathrm{L}^6(0,T;\mathrm{L}^{4}(\Omega)^2),$ \ \ in dimension $d=2$.
	\end{itemize}
	Let us first deduce that $e^{-\frac{3s}{4}\hat{\psi}}\v \in   \mathrm{L}^\infty(0,T;\H) \cap\mathrm{L}^2(0,T;\V).$ To fulfill this purpose, let us define the functions
	\begin{align*}
		\hat{\v}=	e^{-\frac{3s}{4}\hat{\psi}}\v, \ \ 	\hat{q}=	e^{-\frac{3s}{4}\hat{\psi}} q, \ \ 	\hat{\g}=	e^{-\frac{3s}{4}\hat{\psi}}\g^*, \ \ 	\hat{\h}=	e^{-\frac{3s}{4}\hat{\psi}}\h^*, \ \ \text{and} \ \ 	\hat{\boldsymbol{\vartheta}}=	e^{-\frac{3s}{4}\hat{\psi}}\boldsymbol{\vartheta}^*.
	\end{align*}
	Then, the function $(\hat{\v},\hat{q})$ solves the following system:
	\begin{equation}\label{cv11}
		\left\{
		\begin{aligned}
			\mathcal{L}\hat{\v}+\nabla \hat{q}&=\nabla \cdot \hat{\g}+\hat{\h}+\hat{\boldsymbol{\vartheta}} \mathds{1}_{\omega}-\frac{3s}{4}\frac{\partial \hat{\psi}}{\partial t} \hat{\v}, \ \text{ in } Q, \\ \nabla\cdot\hat{\v}&=0,  \ \text{ in } Q,\\
			\hat{\v}&=\mathbf{0}, \ \text{ on }  \Sigma,\\
			\hat{\v}(0)&=e^{-\frac{3s}{4}\hat{\psi}(0)}\v_0, \ \text{ in }  \Omega.
		\end{aligned}
		\right.
	\end{equation}
	It is easy to check that
	\begin{align*}
		\nabla \cdot\hat{\g} ,\  \hat{\h} \in  \mathrm{L}^2(0,T;\mathrm{H}^{-1}(\Omega)^{d}), \  \text{and} \ \hat{\boldsymbol{\vartheta}} \in \mathrm{L}^2(Q_{\omega})^d,
	\end{align*}
	and
	\begin{align*}
		\frac{\partial \hat{\psi}}{\partial t} \hat{\v}=\frac{\partial \hat{\psi}}{\partial t} e^{-\frac{3s}{4}\hat{\psi}}\v=\bigg(\frac{\partial \hat{\psi}}{\partial t} e^{\frac{s}{4}\hat{\psi}}\bigg)e^{-s\hat{\psi}}\v \in  \mathrm{L}^2(Q)^d.
	\end{align*}
	Therefore, for given $\v_0 \in \H$, from the existence result for $d=2,3$, we have $$\hat{\v} \in \mathrm{L}^\infty(0,T;\H) \cap \mathrm{L}^2(0,T;\V).$$
	
	Now, we will prove that $\hat{\v} \in \mathrm{L}^4(0,T;\mathrm{L}^{12}(\Omega)^3) \cap \mathrm{L}^6(Q)^3$ when $d=3$.
	Since $\wi{\u} \in \mathrm{L}^\infty(Q)^3$ and $\hat{\v} \in \mathrm{L}^\infty(0,T;\H) \cap \mathrm{L}^2(0,T;\V)$, we observe that
	\begin{align*}
		\alpha\hat{\v}+\beta|\wi{\u}|^{2}\hat{\v}+2 \beta (\hat{\v} \cdot \wi{\u}) \wi{\u}  \in  \mathrm{L}^2(Q)^3.
	\end{align*}
	Also, $\hat{\v} \in  \mathrm{L}^2(0,T;\V) \subset \mathrm{L}^2(0,T;\mathrm{L}^6(\Omega)^3)$, we obtain
	\begin{align*}
		\nabla \cdot	(\hat{\v} \otimes\wi{\u}), \nabla \cdot (\wi{\u} \otimes \hat{\v}) \in \mathrm{L}^2(0,T;\mathrm{W}^{-1,6}(\Omega)^{3}), 
	\end{align*}
	and these terms can be combined in $\nabla \cdot \hat{\g}$, so that without loss of generality, we can write (with the same notations as in \eqref{cv11})
	\begin{equation}\label{cv12}
		\left\{
		\begin{aligned}
			\frac{\partial\hat{\v}}{\partial t}-\mu \Delta \hat{\v}+\nabla \hat{q}&=\nabla \cdot \hat{\g}+\hat{\h}+\hat{\k}, \ \text{ in } Q, \\ \nabla\cdot\hat{\v}&=0, \ \text{ in } Q,\\
			\hat{\v}&=\mathbf{0},  \ \text{ on }  \Sigma,\\
			\hat{\v}(0)&=e^{-\frac{3s}{4}\hat{\psi}(0)}\v_0, \ \text{ in }  \Omega,
		\end{aligned}
		\right.
	\end{equation}
	where \begin{align*}
		& \ \ \ \  \ \ \ \ \nabla \cdot\hat{\g} , \ \hat{\h} \in  \mathrm{L}^2(0,T;\mathrm{W}^{-1,6}(\Omega)^{3}),
		\end{align*}
and 
\begin{align*}
		 \hat{\k}=\hat{\boldsymbol{\vartheta}} \mathds{1}_{\omega}	-\alpha\hat{\v}-\beta|\wi{\u}|^{2}\hat{\v}-2 \beta (\hat{\v} \cdot \wi{\u}) \wi{\u} -\frac{3s}{4}\frac{\partial \hat{\psi}}{\partial t} \hat{\v} \in \mathrm{L}^2(Q)^3.
	\end{align*}
	Let us consider the Stokes system:
	\begin{equation}\label{cv4}
		\left\{
		\begin{aligned}
			-\frac{\partial{\v}}{\partial t}-\mu \Delta {\v}+\nabla q&={\k}_3, \ \text{ in } Q, \\ \nabla\cdot{\v}&=0, \ \text{ in } Q,\\
			{\v}&=\mathbf{0},  \ \text{ on }  \Sigma,\\
			{\v}(T)&=\mathbf{0},  \ \text{ in }  \Omega.
		\end{aligned}
		\right.
	\end{equation}
	\begin{remark}\label{remGS}
		Based on the Giga-Sohr regularity result (see  \cite[Theorem 2.8]{GS} and  Solonnikov \cite{VAS} also), for  every  ${\k}_3 \in \mathrm{L}^{r_1}(0,T;\mathrm{L}^{r_2}(\Omega)^d)$ and  $1 <r_1, r_2 <+\infty$, there
		exists a unique solution $(\v,\nabla q)$  to the Stokes system \eqref{cv4} satisfying
		\begin{align*}
			&	\v \in \mathrm{L}^{r_1}(0,T_0;\mathrm{W}^{2,r_2}(\Omega)^d)  \ \ \  \text{for all} \ \ \ 0<T_0 \leq T \ \ \  \text{with} \ \ \ T_0 <+\infty,   \\ &   \ \ \ \ \ \ \ \ \ \ \ \  \frac{\partial \v}{\partial t}, \ \nabla q \in \mathrm{L}^{r_1}(0,T;\mathrm{L}^{r_2}(\Omega)^d).
			\end{align*}
			Moreover, we have
			\begin{align*}
				\int_0^T \bigg\|\frac{\partial \v}{\partial t}\bigg\|_{r_2}^{r_1}\d t+\int_0^T \| \Delta \v\|_{r_2}^{r_1}\d t+\int_0^T \|\nabla q\|_{r_2}^{r_1} \d t\leq C \int_0^T \|\k_3\|_{r_2}^{r_1}\d t,
			\end{align*}
			where $C=C(r_1,r_2,\Omega) $ is a positive constant.
	\end{remark}
	
	\begin{lemma}\label{lemr1}
		Let us assume that $d=3$,  $\v_0 \in \H \cap \mathrm{L}^4(\Omega)^3$, $\wi{\u} \in \mathrm{L}^\infty(Q)^3$, and that $\nabla \cdot	\hat{\g}, \ \hat{\h} \in  \mathrm{L}^2(0,T;\mathrm{W}^{-1,6}(\Omega)^{3}),$ and $\hat{\k} \in \mathrm{L}^2(Q)^3$. Then, $\hat{\v} \in \mathrm{L}^4(0,T;\mathrm{L}^{12}(\Omega)^3)$.
	\end{lemma}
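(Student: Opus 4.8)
The plan is to read off the claimed regularity of $\hat{\v}$ directly from the linear Stokes system \eqref{cv12} by combining maximal parabolic regularity (Remark \ref{remGS}) with Sobolev embeddings in dimension $d=3$ and a Gagliardo--Nirenberg interpolation in time. Since \eqref{cv12} is linear, I would split $\hat{\v}=\hat{\v}_{\mathrm i}+\hat{\v}_{\mathrm f}$, where $\hat{\v}_{\mathrm i}$ solves the homogeneous Stokes system carrying the initial datum $\hat{\v}(0)=e^{-\frac{3s}{4}\hat{\psi}(0)}\v_0$, and $\hat{\v}_{\mathrm f}$ solves the Stokes system with zero initial datum and the full right-hand side $\nabla\cdot\hat{\g}+\hat{\h}+\hat{\k}$. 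Note $\hat{\v}(0)\in\mathrm{L}^4(\Omega)^3$, since $\v_0\in\H\cap\mathrm{L}^4(\Omega)^3$ and the weight is bounded at $t=0$.

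For the forcing part $\hat{\v}_{\mathrm f}$ I would treat the two pieces of the source by their natural regularity. The genuine function $\hat{\k}\in\mathrm{L}^2(Q)^3$ is handled by Remark \ref{remGS} with $r_1=r_2=2$, placing its contribution in $\mathrm{L}^2(0,T;\mathrm{H}^2(\Omega)^3)$ with time derivative in $\mathrm{L}^2(Q)^3$; as the datum vanishes, the embedding $\mathrm{L}^2(0,T;\mathrm{H}^2)\cap\mathrm{H}^1(0,T;\H)\hookrightarrow\C([0,T];\V)$ gives in addition $\mathrm{L}^\infty(0,T;\V)$. The negative-order piece $\nabla\cdot\hat{\g}+\hat{\h}\in\mathrm{L}^2(0,T;\mathrm{W}^{-1,6}(\Omega)^3)$ is handled by the corresponding $\mathrm{W}^{1,6}$-maximal regularity for the Stokes operator, landing its contribution in $\mathrm{L}^2(0,T;\mathrm{W}^{1,6}(\Omega)^3)$ and, by vanishing datum, in $\mathrm{L}^\infty(0,T;\mathrm{L}^6(\Omega)^3)$. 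Using the three-dimensional embeddings $\mathrm{H}^2(\Omega)\hookrightarrow\mathrm{L}^\infty(\Omega)$, $\mathrm{W}^{1,6}(\Omega)\hookrightarrow\mathrm{L}^\infty(\Omega)$ and $\V\hookrightarrow\mathrm{L}^6(\Omega)^3$, both contributions satisfy $\hat{\v}_{\mathrm f}\in\mathrm{L}^\infty(0,T;\mathrm{L}^6(\Omega)^3)\cap\mathrm{L}^2(0,T;\mathrm{L}^\infty(\Omega)^3)$. The interpolation inequality $\|w\|_{\mathrm{L}^{12}}\le\|w\|_{\mathrm{L}^6}^{1/2}\|w\|_{\mathrm{L}^\infty}^{1/2}$ then gives $\int_0^T\|\hat{\v}_{\mathrm f}\|_{\mathrm{L}^{12}}^4\,\d t\le\|\hat{\v}_{\mathrm f}\|_{\mathrm{L}^\infty(0,T;\mathrm{L}^6)}^2\int_0^T\|\hat{\v}_{\mathrm f}\|_{\mathrm{L}^\infty}^2\,\d t<+\infty$, that is, $\hat{\v}_{\mathrm f}\in\mathrm{L}^4(0,T;\mathrm{L}^{12}(\Omega)^3)$.

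The hard part is the initial-data contribution $\hat{\v}_{\mathrm i}=e^{t\mu\A}\hat{\v}(0)$, because the time exponent $4$ is \emph{critical} for $\mathrm{L}^4$ data: the crude smoothing bound $\|e^{t\mu\A}\hat{\v}(0)\|_{\mathrm{L}^{12}}\le Ct^{-1/4}\|\hat{\v}(0)\|_{\mathrm{L}^4}$ only yields $\hat{\v}_{\mathrm i}\in\mathrm{L}^p(0,T;\mathrm{L}^{12})$ for every $p<4$ and misses precisely the endpoint $p=4$. To reach it I would instead use the sharp characterization of the Stokes semigroup, namely $e^{t\mu\A}w_0\in\mathrm{L}^4(0,T;\mathrm{L}^{12}(\Omega)^3)$ exactly when $w_0$ lies in the real-interpolation trace space $\mathrm{B}^{-1/2}_{12,4}(\Omega)^3$, together with the Besov--Sobolev embedding $\mathrm{L}^4(\Omega)^3\hookrightarrow\mathrm{B}^{-1/2}_{12,4}(\Omega)^3$ (both spaces have the same differential dimension $-\tfrac34$ and $4\le 12$). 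This is exactly where the hypothesis $\v_0\in\mathrm{L}^4(\Omega)^3$ is consumed, and it explains why $\mathrm{L}^4$ is the correct integrability for the datum. Collecting the estimates for $\hat{\v}_{\mathrm i}$ and $\hat{\v}_{\mathrm f}$ then yields $\hat{\v}\in\mathrm{L}^4(0,T;\mathrm{L}^{12}(\Omega)^3)$, as claimed.
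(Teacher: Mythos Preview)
Your argument is essentially correct, but it proceeds along a genuinely different route from the paper's, and it leans on two pieces of machinery that you invoke without proof.

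The paper does \emph{not} argue directly. Instead it proves the result by \emph{transposition}: it first establishes (Lemma \ref{lemr2}) that for every ${\k}_3\in\mathrm{L}^{4/3}(0,T;\mathrm{L}^{12/11}(\Omega)^3)$ the backward Stokes system \eqref{cv4} has a solution $\v\in\C([0,T];\mathrm{L}^{4/3}(\Omega)^3)\cap\mathrm{L}^2(0,T;\mathrm{W}_0^{1,6/5}(\Omega)^3)$, depending continuously on $\k_3$. Pairing \eqref{cv12} against this dual solution shows that the map $\k_3\mapsto\int_Q\hat{\v}\cdot\k_3$ is a bounded linear functional on $\mathrm{L}^{4/3}(0,T;\mathrm{L}^{12/11})$, whence $\hat{\v}\in\mathrm{L}^4(0,T;\mathrm{L}^{12})$. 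The initial condition $\v_0\in\mathrm{L}^4(\Omega)^3$ enters only through the boundary term $(\hat{\v}(0),\v(0))$, paired against $\v(0)\in\mathrm{L}^{4/3}(\Omega)^3$. Thus the paper trades your endpoint Besov analysis and your $\mathrm{W}^{-1,6}$-maximal-regularity step for a single application of Giga--Sohr with exponents $(4/3,12/11)$ plus elementary interpolation in the dual problem.

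Two places in your write-up deserve more care. First, the ``corresponding $\mathrm{W}^{1,6}$-maximal regularity'' for the Stokes operator with forcing in $\mathrm{L}^2(0,T;\mathrm{W}^{-1,6})$ is not the Giga--Sohr statement in Remark \ref{remGS}, which is formulated only for $\mathrm{L}^{r_2}$ data; it is true, but you should say how you obtain it (e.g., via the weak Stokes theory or by shifting Giga--Sohr one order down by duality). Second, the Besov endpoint argument for $\hat{\v}_{\mathrm i}$ is the delicate step: the characterisation $e^{t\mu\A}w_0\in\mathrm{L}^4(0,T;\mathrm{L}^{12})\Longleftrightarrow w_0\in\mathrm{B}^{-1/2}_{12,4}$ together with $\mathrm{L}^4(\Omega)^3\hookrightarrow\mathrm{B}^{-1/2}_{12,4}(\Omega)^3$ is standard for the heat semigroup on $\mathbb{R}^3$, but on a bounded domain with the Stokes operator it requires citing the appropriate result for analytic semigroups on $\mathrm{L}^q_\sigma$. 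The paper's duality route sidesteps both issues entirely and stays within the toolbox already set up in Remark \ref{remGS}.
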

	%	We have the following proposition, whose proof consists of the interpolation arguments (cf.  \cite{JB,LT}) and follows the proofs given in \cite{CGIP,Puel}.
	This lemma is a consequence of the following one by the transposition method.
	\begin{lemma}\label{lemr2}
		Let $d=3$. Then, for each ${\k}_3 \in \mathrm{L}^\frac{4}{3}(0,T;\mathrm{L}^\frac{12}{11}(\Omega)^3)$,  there exists a unique solution $(\v,q)$ to the Stokes system \eqref{cv4} satisfying
		\begin{align*}
			\v \in \C([0,T];\mathrm{L}^\frac{4}{3}(\Omega)^3) \cap \mathrm{L}^2(0,T;\mathrm{W}_{0}^{1,\frac{6}{5}}(\Omega)^3).
		\end{align*}
	\end{lemma}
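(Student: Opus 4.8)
The plan is to construct the solution by maximal $\mathrm{L}^{r_1}$--$\mathrm{L}^{r_2}$ regularity and then to upgrade its integrability by two \emph{critical} embeddings tailored to the exponents appearing in the statement. First I would apply the Giga--Sohr result recorded in Remark \ref{remGS} to the backward Stokes system \eqref{cv4} with the exponents $r_1=\tfrac43$ and $r_2=\tfrac{12}{11}$ (both lie strictly between $1$ and $\infty$, so the hypotheses are satisfied). This furnishes a unique pair $(\v,\nabla q)$ with
\[
\v\in\mathrm{L}^{4/3}(0,T;\mathrm{W}^{2,12/11}(\Omega)^3),\qquad \frac{\partial\v}{\partial t},\ \nabla q\in\mathrm{L}^{4/3}(0,T;\mathrm{L}^{12/11}(\Omega)^3),
\]
together with the corresponding a priori estimate in terms of $\|{\k}_3\|_{\mathrm{L}^{4/3}(0,T;\mathrm{L}^{12/11})}$. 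It then remains only to show that this (unique) solution lies in the class announced in the lemma.

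For the continuity in time I would invoke the trace theorem for the anisotropic maximal regularity class: since $\v\in\mathrm{L}^{4/3}(0,T;\mathrm{W}^{2,12/11})$ and $\partial_t\v\in\mathrm{L}^{4/3}(0,T;\mathrm{L}^{12/11})$, the function $\v$ belongs to $\C([0,T];(\mathrm{L}^{12/11},\mathrm{W}^{2,12/11})_{1-1/r_1,\,r_1})=\C([0,T];\mathrm{B}^{1/2}_{12/11,4/3}(\Omega))$. The decisive point is that in dimension $d=3$ the indices are exactly critical for the Sobolev--Besov embedding $\mathrm{B}^{1/2}_{12/11,4/3}(\Omega)\hookrightarrow\mathrm{L}^{4/3}(\Omega)$, since $\tfrac34=\tfrac{11}{12}-\tfrac{1/2}{3}$ and the fine index $q=\tfrac43$ equals the target integrability $p^{*}=\tfrac43$. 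This yields $\v\in\C([0,T];\mathrm{L}^{4/3}(\Omega)^3)$, and in particular $\v\in\mathrm{L}^\infty(0,T;\mathrm{L}^{4/3}(\Omega)^3)$.

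For the spatial regularity I would interpolate between this last bound and the second-order bound from Giga--Sohr via the Gagliardo--Nirenberg inequality. With $j=1$, $m=2$, $d=3$, $p=\tfrac{12}{11}$, $q=\tfrac43$ the admissible parameter is forced to be $\theta=\tfrac12$, giving $\|\nabla\v\|_{\mathrm{L}^{6/5}}\le C\|\v\|_{\mathrm{W}^{2,12/11}}^{1/2}\|\v\|_{\mathrm{L}^{4/3}}^{1/2}$. Squaring, integrating over $(0,T)$, and applying Hölder in time together with $\mathrm{L}^{4/3}(0,T)\hookrightarrow\mathrm{L}^1(0,T)$ yields
\[
\int_0^T\|\nabla\v\|_{\mathrm{L}^{6/5}}^2\,\d t\le C\,\|\v\|_{\mathrm{L}^\infty(0,T;\mathrm{L}^{4/3})}\,\|\v\|_{\mathrm{L}^{4/3}(0,T;\mathrm{W}^{2,12/11})}<+\infty .
\]
Since $\mathrm{L}^{4/3}(\Omega)\hookrightarrow\mathrm{L}^{6/5}(\Omega)$ on the bounded domain $\Omega$, combining this with the $\mathrm{L}^\infty$-in-time bound and the homogeneous Dirichlet condition $\v|_{\Sigma}=\mathbf{0}$ gives $\v\in\mathrm{L}^2(0,T;\mathrm{W}_0^{1,6/5}(\Omega)^3)$, as required. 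Uniqueness is inherited from the uniqueness in the Giga--Sohr class together with the linearity of \eqref{cv4}: the difference of two solutions solves the homogeneous system with zero terminal value, and pairing it with the maximal-regularity solution of the \emph{forward} Stokes problem having an arbitrary smooth divergence-free right-hand side (and vanishing initial value) forces it to vanish, the integrations by parts being justified by the regularity just established.

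The step I expect to be the main obstacle is the continuity statement. Because the weighted spaces $\mathcal{E}_3$ and $\mathcal{G}_3$ are built with borderline exponents, both the embedding $\mathrm{B}^{1/2}_{12/11,4/3}\hookrightarrow\mathrm{L}^{4/3}$ and the Gagliardo--Nirenberg identity $\theta=\tfrac12$ sit \emph{exactly} at their critical values, leaving no slack; consequently the sharp endpoint form of each embedding must be used, and a lossy version would not deliver the precise space $\C([0,T];\mathrm{L}^{4/3})\cap\mathrm{L}^2(0,T;\mathrm{W}_0^{1,6/5})$ needed for the subsequent transposition argument in Lemma \ref{lemr1}.
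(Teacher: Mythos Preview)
Your argument is correct and reaches the same conclusion, but the route differs from the paper's in both halves. For the time continuity, the paper first uses the Sobolev embedding $\mathrm{W}^{2,12/11}(\Omega)\hookrightarrow\mathrm{W}^{1,12/7}(\Omega)\hookrightarrow\mathrm{L}^4(\Omega)$ to obtain $\v\in\mathrm{L}^{4/3}(0,T;\mathrm{L}^4)$, and then interpolates between the two \emph{Lebesgue} endpoints $\mathrm{L}^4$ and $\mathrm{L}^{12/11}$ to land in the Lorentz space $\mathrm{L}^{4/3,4/3}=\mathrm{L}^{4/3}$; you instead apply the abstract trace theorem for the maximal-regularity pair and invoke the critical Besov embedding $\mathrm{B}^{1/2}_{12/11,4/3}\hookrightarrow\mathrm{L}^{4/3}$. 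For the spatial part, the paper keeps the integrability exponent fixed at $12/11$, observes $\v\in\mathrm{L}^\infty(0,T;\mathrm{L}^{12/11})$ (from the continuity just proved, since $\Omega$ is bounded), interpolates $(\mathrm{W}^{2,12/11},\mathrm{L}^{12/11})_{1/3,12/11}=\mathrm{W}^{4/3,12/11}$ to get $\v\in\mathrm{L}^2(0,T;\mathrm{W}^{4/3,12/11})$, and then embeds $\mathrm{W}^{4/3,12/11}\hookrightarrow\mathrm{W}^{1,6/5}$; you instead mix the integrability exponents via Gagliardo--Nirenberg between $\mathrm{W}^{2,12/11}$ and the sharper $\mathrm{L}^{4/3}$, which lets you integrate directly in time at the cost of a H\"older factor $T^{1/4}$ absorbed into $C$. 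Both organisations are legitimate and sit at the same critical indices; the paper's version has the minor advantage that it relies only on real interpolation of Sobolev spaces with a common base exponent (so the identification of the intermediate space is immediate), while your version packages the trace step more cleanly but requires the endpoint Besov--Lorentz embedding to be stated precisely. Your remark on uniqueness is superfluous here: uniqueness is already delivered by the Giga--Sohr theorem in the maximal-regularity class, and the lemma only asserts that this unique solution enjoys the stated additional regularity.
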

	\begin{proof}%[Proof of Lemma \ref{lemr2}]
		Let us first show that $\v \in \C([0,T];\mathrm{L}^\frac{4}{3}(\Omega)^3)$. From the Giga-Sohr regularity result on the Stokes problem (see Remark \ref{remGS}), we have
		\begin{align*}
			\v \in \mathrm{L}^\frac{4}{3}(0,T;\mathrm{W}^{2,\frac{12}{11}}(\Omega)^3)  \  \text{ and } \  \frac{\partial \v}{\partial t} \in \mathrm{L}^\frac{4}{3}(0,T;\mathrm{L}^\frac{12}{11}(\Omega)^3).
		\end{align*}
		From the Sobolev embedding theorem, we obtain
		\begin{align*}
			\v \in \mathrm{L}^\frac{4}{3}(0,T;	\mathrm{W}^{2,\frac{12}{11}}(\Omega)^3) \hookrightarrow \mathrm{L}^\frac{4}{3}(0,T;\mathrm{W}^{1,\frac{12}{7}}(\Omega)^3) \hookrightarrow \mathrm{L}^\frac{4}{3}(0,T;\mathrm{L}^4(\Omega)^3).
		\end{align*}
		By interpolation arguments (see \cite{JB} and \cite{LT}), we know that, if 
		\begin{align*}
			\v \in  \mathrm{L}^\frac{4}{3}(0,T;\mathrm{L}^4(\Omega)^3), \ \ \ \text{and} \ \ \ \frac{\partial \v}{\partial t} \in \mathrm{L}^\frac{4}{3}(0,T;\mathrm{L}^\frac{12}{11}(\Omega)^3),
		\end{align*}
		then
		\begin{align*}
			\v \in \C\big([0,T];\big(\mathrm{L}^{4}(\Omega)^3,\mathrm{L}^\frac{12}{11}(\Omega)^3\big)_{\frac{3}{4},\frac{4}{3}}\big).
		\end{align*}
		The interpolation space $\big(\mathrm{L}^{4}(\Omega)^3,\mathrm{L}^\frac{12}{11}(\Omega)^3\big)_{\frac{3}{4},\frac{4}{3}}$ is the Lorentz space $\mathrm{L}^{\frac{4}{3},\frac{4}{3}}(\Omega)^3=\mathrm{L}^{\frac{4}{3}}(\Omega)^3$ (see \cite[Theorem 5.2.1]{JB}). Consequently, we have $\v \in \C([0,T];\mathrm{L}^{\frac{4}{3}}(\Omega)^3)$.
		
		On the other hand, we have
		\begin{align}\label{5.5}
			\v \in \mathrm{L}^\frac{4}{3}\big(0,T;\mathrm{W}^{2,\frac{12}{11}}(\Omega)^3\cap \mathrm{W}^{1,\frac{12}{7}}(\Omega)^3\big) \cap \mathrm{L}^\infty(0,T;\mathrm{L}^\frac{12}{11}(\Omega)^3).
		\end{align}
		We can use interpolation results in \eqref{5.5} to obtain
		\begin{align*}
			\v \in \mathrm{L}^2\big(0,T;(\mathrm{W}^{2,\frac{12}{11}}(\Omega)^3,\mathrm{L}^\frac{12}{11}(\Omega)^3)_{\frac{1}{3},\frac{12}{11}}\big),
		\end{align*}
		and by the definition of Sobolev spaces (see \cite[Chapter 34]{LT}), we have
		\begin{align*}
			\big(\mathrm{W}^{2,\frac{12}{11}}(\Omega)^3,\mathrm{L}^\frac{12}{11}(\Omega)^3\big)_{\frac{1}{3},\frac{12}{11}}=\mathrm{W}^{\frac{4}{3},\frac{12}{11}}(\Omega)^3.
		\end{align*}
		By the virtue of Sobolev's inequality
		$	\mathrm{W}^{\frac{4}{3},\frac{12}{11}}(\Omega)^3  \hookrightarrow \mathrm{W}^{1,\frac{6}{5}}(\Omega)^3$,
		and  $\v=\mathbf{0}$ on the boundary $\Sigma$ in the sense of trace,  we deduce that
		\begin{align*}
			\v \in \mathrm{L}^2\big(0,T;\mathrm{W}_0^{1,\frac{6}{5}}(\Omega)^3\big),
		\end{align*}
		which completes the proof. 
	\end{proof}

	\begin{proof}[Proof of Lemma \ref{lemr1}]
		%Let us accept that Lemma \ref{lemr2} is proved. 
		By definition, we say that  $\hat{\v}$ is the solution by  transposition of the problem for the Stokes equation \eqref{cv12}. This mean that $\hat{\v} \in \mathrm{L}^4(0,T;\mathrm{L}^{12}(\Omega)^3)$, is  the unique solution of \eqref{cv12} satisfying: 
		\begin{align*}
			\int_{Q}\hat{\v}\cdot {\k}_3 \d x \d t&= \int_{\Omega} e^{-\frac{3s}{4}\hat{\psi}(0)}\v_0 \cdot \v(0) \d x+\int_{0}^{T} \langle \nabla \cdot \hat{\g}+\hat{\h} , \v \rangle_{\mathrm{W}^{-1,6}(\Omega)^3, \mathrm{W}^{1,\frac{6}{5}}_{0}(\Omega)^3} \d t\\&\quad+\int_{Q} \hat{\k} \cdot\v \d x \d t, \ \  \text{ for all } \ {\k}_3 \in \mathrm{L}^\frac{4}{3}(0,T;\mathrm{L}^\frac{12}{11}(\Omega)^3),
		\end{align*}
		where $(\v,q)$ is the solution to \eqref{cv4} associated to $\k_3$.	We know that 
		\begin{align*}
			\v(0) \in \mathrm{L}^\frac{4}{3}(\Omega)^3,  \ \ \v \in \mathrm{L}^2(0,T;\mathrm{W}_{0}^{1,\frac{6}{5}}(\Omega)^3) \subset \mathrm{L}^2(Q)^3,
		\end{align*}
		and \begin{align*}
			\v_0 \in \mathrm{L}^{4}(\Omega)^3, \ \ \nabla \cdot \hat{\g}, \ \hat{\h} \in  \mathrm{L}^2(0,T;\mathrm{W}^{-1,6}(\Omega)^{3}), \ \hat{\k} \in \mathrm{L}^2(Q)^3.
		\end{align*}
		Note that, for every ${\k}_3 \in \mathrm{L}^\frac{4}{3}(0,T;\mathrm{L}^\frac{12}{11}(\Omega)^3)$, the Stokes system \eqref{cv4} possesses exactly one solution $\v \in \mathrm{L}^4(0,T;\mathrm{L}^{12}(\Omega)^3)$
		that depends continuously on  ${\k}_3$. Therefore, $\hat{\v}$ is well-defined and defines a unique element $\hat{\v} \in \left( \mathrm{L}^\frac{4}{3}(0,T;\mathrm{L}^\frac{12}{11}(\Omega)^3)\right)'\equiv\mathrm{L}^4(0,T;\mathrm{L}^{12}(\Omega)^3)$, which completes the proof of Lemma \ref{lemr1}.
	\end{proof}

	Now, our task is to deduce that $\hat{\v} \in \mathrm{L}^6(Q)^3$. For this purpose, we establish the following lemma:
	\begin{lemma}\label{lemr4}
		Let $d=3$. Then, for each ${\k}_3 \in \mathrm{L}^\frac{6}{5}(Q)^3$, there exists a unique solution $(\v,q)$ to the Stokes system \eqref{cv4} satisfying
		\begin{align*}
			\v \in \C([0,T];\mathrm{L}^\frac{4}{3}(\Omega)^3) \cap \mathrm{L}^2(0,T;\mathrm{W}_{0}^{1,\frac{6}{5}}(\Omega)^3).
		\end{align*}
	\end{lemma}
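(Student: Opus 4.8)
The plan is to reproduce, with the exponent pair $(6/5,6/5)$ in place of $(4/3,12/11)$, the same two-stage argument already used for Lemma~\ref{lemr2}; the only structural change is that the datum now lives in the isotropic space $\mathrm{L}^{6/5}(Q)^3=\mathrm{L}^{6/5}(0,T;\mathrm{L}^{6/5}(\Omega)^3)$. First I would apply the Giga--Sohr maximal regularity result recalled in Remark~\ref{remGS} with $r_1=r_2=6/5$ (admissible since $1<6/5<\infty$) to the backward Stokes system \eqref{cv4}. This produces a unique pair $(\v,\nabla q)$ with
\[
\v\in\mathrm{L}^{6/5}(0,T;\mathrm{W}^{2,6/5}(\Omega)^3),\qquad \partial_t\v,\ \nabla q\in\mathrm{L}^{6/5}(0,T;\mathrm{L}^{6/5}(\Omega)^3),
\]
together with the associated a priori bound; uniqueness in this class is immediate from the linearity of \eqref{cv4}. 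It then remains to upgrade this regularity to the two properties in the statement.

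For the continuity $\v\in\C([0,T];\mathrm{L}^{4/3}(\Omega)^3)$ I would first invoke, in dimension $d=3$, the Sobolev embedding $\mathrm{W}^{2,6/5}(\Omega)^3\hookrightarrow\mathrm{L}^6(\Omega)^3\hookrightarrow\mathrm{L}^3(\Omega)^3$ (the exponent $6$ arising from $\tfrac1r=\tfrac56-\tfrac23=\tfrac16$, which is legitimate because $6/5<3/2$), so that $\v\in\mathrm{L}^{6/5}(0,T;\mathrm{L}^3(\Omega)^3)$. Combining this with $\partial_t\v\in\mathrm{L}^{6/5}(0,T;\mathrm{L}^{6/5}(\Omega)^3)$ and applying the same interpolation/trace theorem used in Lemma~\ref{lemr2} (cf.\ \cite{JB,LT}), with parameter $\theta=\tfrac{1}{6/5}=\tfrac56$, I obtain $\v\in\C([0,T];(\mathrm{L}^3(\Omega)^3,\mathrm{L}^{6/5}(\Omega)^3)_{5/6,6/5})$. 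The indices are arranged so that this interpolation space is the Lorentz space $\mathrm{L}^{4/3,6/5}(\Omega)^3$, which embeds continuously into $\mathrm{L}^{4/3}(\Omega)^3$ since $6/5\le 4/3$; hence $\v\in\C([0,T];\mathrm{L}^{4/3}(\Omega)^3)$.

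For the spatial regularity $\v\in\mathrm{L}^2(0,T;\mathrm{W}^{1,6/5}_0(\Omega)^3)$ I would interpolate the bound $\v\in\mathrm{L}^{6/5}(0,T;\mathrm{W}^{2,6/5}(\Omega)^3)$ against $\v\in\mathrm{L}^\infty(0,T;\mathrm{L}^{4/3}(\Omega)^3)\hookrightarrow\mathrm{L}^\infty(0,T;\mathrm{L}^{6/5}(\Omega)^3)$, the latter coming from the continuity just established. Choosing $\theta=\tfrac25$ so that $\tfrac1{p_\theta}=\tfrac{5(1-\theta)}{6}=\tfrac12$, the Bochner-scale interpolation yields $\v\in\mathrm{L}^2(0,T;(\mathrm{W}^{2,6/5}(\Omega)^3,\mathrm{L}^{6/5}(\Omega)^3)_{2/5,6/5})=\mathrm{L}^2(0,T;\mathrm{W}^{6/5,6/5}(\Omega)^3)$, using the characterization of fractional Sobolev spaces as real interpolation spaces (see \cite[Chapter~34]{LT}). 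Since $\mathrm{W}^{6/5,6/5}(\Omega)^3\hookrightarrow\mathrm{W}^{1,6/5}(\Omega)^3$ and $\v=\mathbf{0}$ on $\Sigma$ in the trace sense, this gives $\v\in\mathrm{L}^2(0,T;\mathrm{W}^{1,6/5}_0(\Omega)^3)$ and completes the argument.

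The main obstacle is not a PDE estimate — all the analytic content is already packaged in Remark~\ref{remGS} — but the bookkeeping of exponents in $d=3$: one must check that $6/5$ stays below the Sobolev threshold $3/2$ so that $\mathrm{W}^{2,6/5}\hookrightarrow\mathrm{L}^6$ holds with a genuine Lebesgue target, and that the trace and Bochner interpolation indices collapse exactly onto $\mathrm{L}^{4/3}$ and $\mathrm{W}^{1,6/5}$ respectively (up to the harmless Lorentz refinement $\mathrm{L}^{4/3,6/5}\hookrightarrow\mathrm{L}^{4/3}$). As with Lemma~\ref{lemr2}, once these embeddings are verified no further difficulty arises, and the resulting regularity of \eqref{cv4} is precisely what is needed to deduce $e^{-\frac{3s}{4}\hat{\psi}}\v\in\mathrm{L}^6(Q)^3$ by the transposition method in the subsequent step.
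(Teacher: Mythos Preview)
Your proof is correct and follows essentially the same two-step strategy as the paper: Giga--Sohr maximal regularity followed by a trace/interpolation argument for the $\C([0,T];\mathrm{L}^{4/3})$ claim, and then Bochner-scale interpolation between $\mathrm{L}^{6/5}(0,T;\mathrm{W}^{2,6/5})$ and $\mathrm{L}^\infty(0,T;\mathrm{L}^{6/5})$ for the $\mathrm{L}^2(0,T;\mathrm{W}^{1,6/5}_0)$ claim. The only differences are cosmetic exponent choices --- the paper embeds $\mathrm{W}^{2,6/5}$ into $\mathrm{L}^6$ rather than your $\mathrm{L}^3$ and quotes interpolation indices $(7/8,4/3)$ and $\theta=1/3$ in place of your $(5/6,6/5)$ and $\theta=2/5$ --- but your computations (landing in $\mathrm{L}^{4/3,6/5}\hookrightarrow\mathrm{L}^{4/3}$ and $\mathrm{W}^{6/5,6/5}\hookrightarrow\mathrm{W}^{1,6/5}$) are in fact the ones consistent with the standard trace theorem $\theta=1/p$ and with the H\"older condition $(1-\theta)/p_0+\theta/p_1=1/2$, so your bookkeeping is arguably tighter.
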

	Let us assume that Lemma \ref{lemr4} holds. Then, $\hat{\v}$ must coincide with the solution by transposition of the problem \eqref{cv12}, namely, the unique function $\hat{\v} \in \mathrm{L}^6(Q)^3$ satisfying
	\begin{align*}
		\int_{Q}\hat{\v} \cdot {\k}_3 \d x \d t&= \int_{\Omega}  e^{-\frac{3s}{4}\hat{\psi}(0)}\v_0 \cdot\v(0) \d x+\int_{0}^T\langle\boldsymbol{F}, \v \rangle_{\mathrm{W}^{-1,6}(\Omega)^3,\mathrm{W}_{0}^{1,\frac{6}{5}}(\Omega)^3} \d t,
	\end{align*}
	for all $ {\k}_3 \in \mathrm{L}^\frac{6}{5}(Q)^3$ and $\boldsymbol{F}$ stands for the function
	\begin{align}\label{d1}
		\boldsymbol{F}&= \nabla \cdot \hat{\g}+\hat{\h}+\hat{\boldsymbol{\vartheta}} \mathds{1}_{\omega}-\frac{3s}{4}\frac{\partial \hat{\psi}}{\partial t} \hat{\v}\no\\&\quad-\big((\wi{\u}\cdot \nabla) \hat{\v} +(\hat{\v} \cdot \nabla)\wi{\u}+\alpha\hat{\v}+\beta|\wi{\u}|^{2}\hat{\v}+2 \beta (\hat{\v} \cdot \wi{\u}) \wi{\u}\big),
	\end{align}
	and $(\v,q)$ is the solution of the Stokes system \eqref{cv4} associated to $\k_3$. Remark that, as we already had that $\hat{\v} \in \mathrm{L}^2(0,T;\V) \subset \mathrm{L}^2(0,T;\mathrm{L}^{6}(\Omega)^3)$, all the terms of the previous definition \eqref{d1} make sense by virtue of Lemma \ref{lemr4} and the assumption $\v_0 \in \mathrm{L}^4(\Omega)^3$ (see Remark \ref{remGS}). Therefore, $\hat{\v} \in \left( \mathrm{L}^\frac{6}{5}(Q)^3\right)'\equiv\mathrm{L}^6(Q)^3$.
	
	Let us provide a proof of Lemma \ref{lemr4}, which is based on interpolation arguments.
	\begin{proof}[Proof of Lemma \ref{lemr4}]
		Let us first prove that $\v \in \C([0,T];\mathrm{L}^\frac{4}{3}(\Omega)^3)$. From the Giga-Sohr regularity result for the Stokes problem (see Remark \ref{remGS}), we have that
		\begin{align*}
			\v \in \mathrm{L}^\frac{6}{5}(Q)^3  \ \ \ \text{and} \ \ \ \frac{\partial \v}{\partial t} \in \mathrm{L}^\frac{6}{5}(Q)^3.
		\end{align*}
		The Sobolev embedding yields
		\begin{align*}
			\mathrm{W}^{2,\frac{6}{5}}(\Omega)^3 \hookrightarrow \mathrm{H}^{1}(\Omega)^3 \hookrightarrow \mathrm{L}^6(\Omega)^3.
		\end{align*}
		By the virtue of interpolation results, we have that, if 
		\begin{align*}
			\v \in \mathrm{L}^\frac{6}{5}(0,T;\mathrm{L}^{6}(\Omega)^3) \  \ \ \text{and} \ \ \ \frac{\partial \v}{\partial t} \in \mathrm{L}^\frac{6}{5}(Q)^3,
		\end{align*}
		then
		\begin{align*}
			\v \in \C\big([0,T];\big(\mathrm{L}^6(\Omega)^3,\mathrm{L}^\frac{6}{5}(\Omega)^3\big)_{\frac{7}{8},\frac{4}{3}}\big),
		\end{align*}
		and the interpolation space $\big(\mathrm{L}^6(\Omega)^3,\mathrm{L}^\frac{6}{5}(\Omega)^3\big)_{\frac{7}{8},\frac{4}{3}}$ coincides with the Lorentz space $\mathrm{L}^{\frac{4}{3},\frac{4}{3}}(\Omega)^3=\mathrm{L}^{\frac{4}{3}}(\Omega)^3$ (see  \cite[Theorem 5.2.1]{JB}). Therefore, we get $\v \in \C([0,T];\mathrm{L}^\frac{4}{3}(\Omega)^3)$.
		
		On the other hand, we also have
		\begin{align*}
			\v \in \mathrm{L}^\frac{6}{5}(0,T;\mathrm{W}^{2,\frac{6}{5}}(\Omega)^3 \cap \mathrm{H}_0^{1}(\Omega)^3) \  \ \ \text{and} \ \ \ \v \in \mathrm{L}^\infty(0,T;\mathrm{L}^\frac{6}{5}(\Omega)^3).
		\end{align*}
		Using the interpolation results, we deduce that
		\begin{align*}
			\v \in \mathrm{L}^2\big(0,T;(\mathrm{W}^{2,\frac{6}{5}}(\Omega)^3,\mathrm{L}^{\frac{6}{5}}(\Omega)^3)_{\frac{1}{3},\frac{6}{5}}\big).
		\end{align*}
		But we know that (see \cite[Chapter 34]{LT})
		\begin{align*}
			\big(\mathrm{W}^{2,\frac{6}{5}}(\Omega)^3,\mathrm{L}^{\frac{6}{5}}(\Omega)^3\big)_{\frac{1}{3},\frac{6}{5}}=\mathrm{W}^{\frac{4}{3},\frac{6}{5}}(\Omega)^3.
		\end{align*}
		Thanks to Sobolev's embedding 
		$	\mathrm{W}^{\frac{4}{3},\frac{6}{5}}(\Omega)^3  \hookrightarrow \mathrm{W}^{1,\frac{6}{5}}(\Omega)^3$
		and  $\v(t) \in \mathrm{H}_0^{1}(\Omega)^3,$ a.e. $t \in (0,T)$, we finally infer that
		\begin{align*}
			\v \in \mathrm{L}^2\big(0,T;\mathrm{W}_0^{1,\frac{6}{5}}(\Omega)^3\big),
		\end{align*}
		which completes the proof. 
	\end{proof}
Let us now consider the case $d=2$. 	Our  task is to prove that $\hat{\v} \in \mathrm{L}^6(0,T;\mathrm{L}^4(\Omega)^{2})$. 
	\begin{lemma}\label{lemr5}
		Let $d=2$. Then, for each ${\k}_3 \in \mathrm{L}^\frac{6}{5}(0,T;\mathrm{L}^\frac{4}{3}(\Omega)^2)$, there exists a unique solution $(\v,q)$ to the Stokes system \eqref{cv4} satisfying
		\begin{align*}
			\v \in \C([0,T];\H) \cap \mathrm{L}^2(0,T;\V).
		\end{align*}
	\end{lemma}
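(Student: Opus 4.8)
The plan is to establish Lemma \ref{lemr5} by the same scheme used in the proofs of Lemmas \ref{lemr2} and \ref{lemr4}, namely maximal $\mathrm{L}^p$--$\mathrm{L}^r$ regularity for the backward Stokes system followed by Sobolev embeddings and real interpolation in the time variable, the only change being the two-dimensional Sobolev exponents. First I would apply the Giga--Sohr regularity result (Remark \ref{remGS}) to the system \eqref{cv4} with $r_1=\frac{6}{5}$ and $r_2=\frac{4}{3}$: since $\k_3\in\mathrm{L}^{6/5}(0,T;\mathrm{L}^{4/3}(\Omega)^2)$, this yields a unique $(\v,q)$ with
\begin{align*}
\v\in\mathrm{L}^{6/5}(0,T;\mathrm{W}^{2,4/3}(\Omega)^2),\quad \frac{\partial\v}{\partial t},\ \nabla q\in\mathrm{L}^{6/5}(0,T;\mathrm{L}^{4/3}(\Omega)^2),
\end{align*}
together with the corresponding a priori bound. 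Uniqueness in the asserted class then follows at once from the linearity of \eqref{cv4} and this estimate, so the whole content of the lemma is the regularity statement.

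Next, to obtain the continuity $\v\in\C([0,T];\H)$, I would use the two-dimensional Sobolev embeddings $\mathrm{W}^{2,4/3}(\Omega)^2\hookrightarrow\mathrm{W}^{1,4}(\Omega)^2\hookrightarrow\mathrm{L}^\infty(\Omega)^2$, so that $\v\in\mathrm{L}^{6/5}(0,T;\mathrm{L}^{4}(\Omega)^2)$, and combine this with $\frac{\partial\v}{\partial t}\in\mathrm{L}^{6/5}(0,T;\mathrm{L}^{4/3}(\Omega)^2)$ through the same trace/interpolation theorem already invoked in Lemmas \ref{lemr2} and \ref{lemr4} (cf. \cite{JB,LT}). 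This places $\v$ in $\C([0,T];(\mathrm{L}^{4}(\Omega)^2,\mathrm{L}^{4/3}(\Omega)^2)_{\theta,q})$ for the exponents dictated by the time integrability $\frac{6}{5}$; identifying the resulting Lorentz space with $\mathrm{L}^2(\Omega)^2$ and using $\nabla\cdot\v=0$ together with $\v\cdot\n=0$ on $\partial\Omega$ then gives $\v\in\C([0,T];\H)$.

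For the remaining statement $\v\in\mathrm{L}^2(0,T;\V)$, I would interpolate the spatial regularity $\v\in\mathrm{L}^{6/5}(0,T;\mathrm{W}^{2,4/3}(\Omega)^2\cap\V)$ against the uniform bound $\v\in\mathrm{L}^\infty(0,T;\H)$ just obtained. By the mixed space--time interpolation used for \eqref{5.5}, this produces $\v\in\mathrm{L}^2(0,T;\mathrm{W}^{\sigma,r}(\Omega)^2)$ for the intermediate indices, and by the characterisation of the interpolation spaces as fractional Sobolev spaces (\cite[Chapter 34]{LT}) one identifies the target with a space embedding into $\mathrm{W}^{1,2}(\Omega)^2$; the boundary condition $\v=\mathbf{0}$ on $\Sigma$, understood in the trace sense, then upgrades this to $\mathrm{L}^2(0,T;\mathrm{W}_0^{1,2}(\Omega)^2)=\mathrm{L}^2(0,T;\V)$.

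The main obstacle is precisely this last step: unlike the three-dimensional situations of Lemmas \ref{lemr2} and \ref{lemr4}, the two-dimensional parabolic scaling attached to the pair $(\frac{6}{5},\frac{4}{3})$ is tight, so the bookkeeping of the interpolation exponents---checking that the fractional differentiability and integrability close up so as to land in $\mathrm{W}_0^{1,2}$ while retaining $\mathrm{L}^2$ integrability in time---has to be carried out with care, exactly as in the cited lemmas, and is the delicate heart of the argument. Everything else (the Giga--Sohr bound, the Sobolev embeddings, and the transposition interpretation by which this lemma feeds back into the proof that $e^{-\frac{3s}{4}\hat{\psi}}\v\in\mathrm{L}^6(0,T;\mathrm{L}^4(\Omega)^2)$) is routine once these exponents have been fixed.
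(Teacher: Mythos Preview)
Your proposal follows essentially the same route as the paper: Giga--Sohr maximal regularity, Sobolev embeddings, and real interpolation in time. The one point of divergence is the $\mathrm{L}^2(0,T;\V)$ step. You propose to interpolate $\mathrm{L}^{6/5}(0,T;\mathrm{W}^{2,4/3}(\Omega)^2)$ against the $\mathrm{L}^\infty(0,T;\H)$ bound obtained from the continuity part; the paper instead interpolates against $\mathrm{L}^\infty(0,T;\mathrm{L}^{4/3}(\Omega)^2)$, which it gets directly (and \emph{before} proving $\C([0,T];\H)$) from $\v,\partial_t\v\in\mathrm{L}^{6/5}(0,T;\mathrm{L}^{4/3}(\Omega)^2)$ via the one-dimensional embedding $\mathrm{W}^{1,6/5}(0,T)\hookrightarrow\C([0,T])$. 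Keeping the spatial integrability index fixed at $4/3$ on both endpoints is what makes the intermediate space a clean fractional Sobolev space $\mathrm{W}^{3/2,4/3}(\Omega)^2$, which in two dimensions embeds into $\mathrm{H}^1(\Omega)^2$; your mixed-exponent endpoints would force you to identify a Besov-type space instead. Since $\mathrm{L}^2(\Omega)\hookrightarrow\mathrm{L}^{4/3}(\Omega)$ on bounded $\Omega$, your endpoint is actually stronger, so the argument can be salvaged by simply passing through $\mathrm{L}^{4/3}$---but the paper's ordering and endpoint choice are what make the ``careful bookkeeping'' you flag come out cleanly.
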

	Let us assume that Lemma \ref{lemr5} holds. Then, $\hat{\v}$ must coincide with the solution by transposition of the problem \eqref{cv12}, namely, the unique function $\hat{\v} \in \mathrm{L}^6(0,T;\mathrm{L}^4(\Omega)^{2})$ verifying
	\begin{align*}
		\int_{Q}\hat{\v} \cdot {\k}_3 \d x \d t&= \int_{\Omega}  e^{-\frac{3s}{4}\hat{\psi}(0)}\v_0 \cdot\v(0) \d x+\int_{0}^T\langle\boldsymbol{F^*}, \v \rangle_{\mathrm{H}^{-1}(\Omega)^2,\mathrm{H}_{0}^{1}(\Omega)^2} \d t,
	\end{align*}
	for all $ {\k}_3 \in \mathrm{L}^\frac{6}{5}(0,T;\mathrm{L}^\frac{4}{3}(\Omega)^2)$ (cf. Remark \ref{remGS}). Here, the function  $\boldsymbol{F^*}$ is
	\begin{align}\label{d12}
		\boldsymbol{F^*}&= \nabla \cdot \hat{\g}+\hat{\h}+\hat{\boldsymbol{\vartheta}} \mathds{1}_{\omega}-\frac{3s}{4}\frac{\partial \hat{\psi}}{\partial t} \hat{\v}\no\\&\quad-\big((\wi{\u}\cdot \nabla) \hat{\v} +(\hat{\v} \cdot \nabla)\wi{\u}+\alpha\hat{\v}+\beta|\wi{\u}|^{2}\hat{\v}+2 \beta (\hat{\v} \cdot \wi{\u}) \wi{\u}\big),
	\end{align}
	and $(\v,q)$ is the solution of the Stokes system \eqref{cv4} associated to $\k_3$. By using the similar arguments as Lemma \ref{lemr1} and  by the virtue of Lemma \ref{lemr5}, we finally obtain $\hat{\v} \in \mathrm{L}^6(0,T;\mathrm{L}^{4}(\Omega)^2)$.
	\begin{proof}[Proof of Lemma \ref{lemr5}]
		Let us first deduce that $\v \in \mathrm{L}^2(0,T;\mathrm{H}_0^1(\Omega)^2)$. From the Giga-Sohr regularity result for the Stokes problem (see Remark \ref{remGS}), we have that
		\begin{align*}
			\v \in \mathrm{L}^\frac{6}{5}(0,T;\mathrm{W}^{2,\frac{4}{3}}(\Omega)^2 \cap \mathrm{H}_0^1(\Omega)^2)  \ \ \ \text{and} \ \ \ \frac{\partial \v}{\partial t} \in \mathrm{L}^\frac{6}{5}(0,T;\mathrm{L}^\frac{4}{3}(\Omega)^2).
		\end{align*}
		We also have
		\begin{align*}
			\v \in \mathrm{L}^\frac{6}{5}(0,T;\mathrm{W}^{2,\frac{4}{3}}(\Omega)^2)  \cap \mathrm{L}^\infty(0,T;\mathrm{L}^\frac{4}{3}(\Omega)^2).
		\end{align*}
		The interpolation results gives
		\begin{align*}
			\v \in \mathrm{L}^2\big(0,T;(\mathrm{W}^{2,\frac{4}{3}}(\Omega)^2,\mathrm{L}^{\frac{4}{3}}(\Omega)^2)_{\frac{1}{4},\frac{4}{3}}\big).
		\end{align*}
		But we know that (see \cite[Chapter 34]{LT})
		\begin{align*}
			\big(\mathrm{W}^{2,\frac{4}{3}}(\Omega)^2,\mathrm{L}^{\frac{4}{3}}(\Omega)^3\big)_{\frac{1}{4},\frac{4}{3}}=\mathrm{W}^{\frac{3}{2},\frac{4}{3}}(\Omega)^2.
		\end{align*}
		The Sobolev embedding theorem
		$	\mathrm{W}^{\frac{3}{2},\frac{4}{3}}(\Omega)^2 \hookrightarrow \mathrm{H}^{1}(\Omega)^2$ and 
		$\v=\mathbf{0}$ on the boundary $\Sigma$ in the sense of trace results to
		$$\v \in \mathrm{L}^2\big(0,T;\mathrm{H}_0^{1}(\Omega)^2\big).$$
		
		On the other hand, 	the Sobolev embedding yields
		\begin{align*}
			\mathrm{W}^{2,\frac{4}{3}}(\Omega)^2 \hookrightarrow \mathrm{H}^{1}(\Omega)^2 \hookrightarrow \mathrm{L}^4(\Omega)^2.
		\end{align*}
		By the virtue of interpolation results, we have that, if 
		\begin{align*}
			\v \in \mathrm{L}^\frac{6}{5}(0,T;\mathrm{L}^{4}(\Omega)^2) \  \ \ \text{and} \ \ \ \frac{\partial \v}{\partial t} \in \mathrm{L}^\frac{6}{5}(0,T;\mathrm{L}^\frac{4}{3}(\Omega)^2),
		\end{align*}
		then
		\begin{align*}
			\v \in \C\big([0,T];\big(\mathrm{L}^\frac{4}{3}(\Omega)^2,\mathrm{L}^4(\Omega)^2\big)_{\frac{1}{2},2}\big).
		\end{align*}
		Furthermore, thanks to the Lorentz space (see \cite[Theorem 5.2.1]{JB})
		\begin{align*}
			\big(\mathrm{L}^\frac{4}{3}(\Omega)^2,\mathrm{L}^4(\Omega)^2\big)_{\frac{1}{2},2}=\mathrm{L}^{2,2}(\Omega)^2=\mathrm{L}^{2}(\Omega)^2,
		\end{align*}
		and we deduce that $\v \in \C([0,T];\H)$ and completes the proof.
	\end{proof}
	
	\begin{remark}\label{remrg}
		In light of the preceding regularity results, it is evident that the solution  $(\v, \boldsymbol{\vartheta}^{*})$ to the controllability problem \eqref{cv3} belongs to the space $\mathcal{E}$.
	\end{remark}
	
	\subsection{Local exact controllability to trajectories}\label{subsec 5.3} In this subsection, we prove our main result of local exact controllability to trajectories of the CBF equations \eqref{a1} (Theorem \ref{thmm}) by using the inverse mapping theorem (or epimorphism theorem) given below. In order to achieve this, we  establish the null controllability of the nonlinear system \eqref{a3} which leads to the require result. We mainly follow the ideas outlined in \cite{CGIP,Puel} and references therein.
	\begin{theorem}[Inverse mapping theorem]\label{thmnl}
		Let $\mathcal{E}$ and $\mathcal{G}$ be two Banach spaces and let $\mathcal{A}:\mathcal{E} \to \mathcal{G}$ satisfy $\mathcal{A} \in \C^{1}(\mathcal{E};\mathcal{G})$. Assume that  $\boldsymbol{e}_{0} \in \mathcal{E}$, $\mathcal{A}(\boldsymbol{e}_0)=\boldsymbol{b}_0$,  and $\mathcal{A}'(\boldsymbol{e}_{0}):\mathcal{E}\to \mathcal{G}$ is surjective. Then, there exists $\delta>0$ such that, for every $\boldsymbol{b} \in \mathcal{G}$ satisfying $\|\boldsymbol{b}-\boldsymbol{b}_{0}\|_{\mathcal{G}}< \delta$, there exists a solution of the equation
		\begin{align*}
			\mathcal{A}(\boldsymbol{e})=\boldsymbol{b}, \ \ \ \boldsymbol{e} \in \mathcal{E}.
		\end{align*}
	\end{theorem}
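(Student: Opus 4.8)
The plan is to derive this as the classical \emph{surjective mapping theorem} (Lyusternik--Graves), obtained by combining the open mapping theorem (which converts surjectivity of the derivative into a quantitative bounded right inverse) with a Newton--type iteration that absorbs the nonlinear remainder. Write $\Lambda:=\mathcal{A}'(\boldsymbol{e}_0)\colon\mathcal{E}\to\mathcal{G}$. Since $\Lambda$ is a bounded linear surjection between Banach spaces, the open mapping theorem yields a constant $c>0$ with $\Lambda\big(B_{\mathcal{E}}(0,1)\big)\supseteq B_{\mathcal{G}}(0,c)$. Setting $C_0:=2/c$, for every $\boldsymbol{g}\in\mathcal{G}$ I may \emph{select} an element $R\boldsymbol{g}\in\mathcal{E}$ with
\[
\Lambda(R\boldsymbol{g})=\boldsymbol{g},\qquad \|R\boldsymbol{g}\|_{\mathcal{E}}\le C_0\,\|\boldsymbol{g}\|_{\mathcal{G}}.
\]
This bounded right inverse $R$ is the structural payoff of surjectivity; crucially it need be neither linear nor continuous, only norm--bounded.

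Next I would quantify the linearization error using $\mathcal{A}\in\C^{1}(\mathcal{E};\mathcal{G})$. Continuity of $\mathcal{A}'$ at $\boldsymbol{e}_0$ gives, for any prescribed $\varepsilon>0$, a radius $\rho>0$ such that $\|\mathcal{A}'(\boldsymbol{e})-\Lambda\|\le\varepsilon$ whenever $\|\boldsymbol{e}-\boldsymbol{e}_0\|_{\mathcal{E}}\le\rho$. The fundamental theorem of calculus for the Fréchet derivative then provides, for $\boldsymbol{x}$ and $\boldsymbol{x}+\boldsymbol{d}$ in that ball,
\[
\mathcal{A}(\boldsymbol{x}+\boldsymbol{d})-\mathcal{A}(\boldsymbol{x})-\Lambda\boldsymbol{d}=\int_0^1\big[\mathcal{A}'(\boldsymbol{x}+t\boldsymbol{d})-\Lambda\big]\boldsymbol{d}\,\d t,
\]
whence $\|\mathcal{A}(\boldsymbol{x}+\boldsymbol{d})-\mathcal{A}(\boldsymbol{x})-\Lambda\boldsymbol{d}\|_{\mathcal{G}}\le\varepsilon\,\|\boldsymbol{d}\|_{\mathcal{E}}$. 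I fix the balance $\varepsilon:=1/(2C_0)$ and take the corresponding $\rho$.

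Then I would run the iteration $\boldsymbol{e}^{(0)}:=\boldsymbol{e}_0$ and $\boldsymbol{e}^{(n+1)}:=\boldsymbol{e}^{(n)}+R\big(\boldsymbol{r}^{(n)}\big)$, with residual $\boldsymbol{r}^{(n)}:=\boldsymbol{b}-\mathcal{A}(\boldsymbol{e}^{(n)})$. By construction $\Lambda\big(\boldsymbol{e}^{(n+1)}-\boldsymbol{e}^{(n)}\big)=\boldsymbol{r}^{(n)}$ and $\|\boldsymbol{e}^{(n+1)}-\boldsymbol{e}^{(n)}\|_{\mathcal{E}}\le C_0\|\boldsymbol{r}^{(n)}\|_{\mathcal{G}}$, so that $\boldsymbol{r}^{(n+1)}=-\big[\mathcal{A}(\boldsymbol{e}^{(n+1)})-\mathcal{A}(\boldsymbol{e}^{(n)})-\Lambda(\boldsymbol{e}^{(n+1)}-\boldsymbol{e}^{(n)})\big]$, and the remainder estimate gives $\|\boldsymbol{r}^{(n+1)}\|_{\mathcal{G}}\le\varepsilon C_0\|\boldsymbol{r}^{(n)}\|_{\mathcal{G}}\le\tfrac12\|\boldsymbol{r}^{(n)}\|_{\mathcal{G}}$, \emph{provided} the iterates remain in $B_{\mathcal{E}}(\boldsymbol{e}_0,\rho)$. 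Choosing $\delta:=\rho/(2C_0)$ and assuming $\|\boldsymbol{b}-\boldsymbol{b}_0\|_{\mathcal{G}}=\|\boldsymbol{r}^{(0)}\|_{\mathcal{G}}<\delta$, an induction closes this: the residuals decay geometrically, $\|\boldsymbol{r}^{(n)}\|_{\mathcal{G}}\le 2^{-n}\|\boldsymbol{r}^{(0)}\|_{\mathcal{G}}$, the increments are summable with $\sum_n\|\boldsymbol{e}^{(n+1)}-\boldsymbol{e}^{(n)}\|_{\mathcal{E}}\le 2C_0\|\boldsymbol{r}^{(0)}\|_{\mathcal{G}}<\rho$, so every iterate stays in the ball and $(\boldsymbol{e}^{(n)})$ is Cauchy. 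Its limit $\boldsymbol{e}\in\mathcal{E}$ satisfies $\mathcal{A}(\boldsymbol{e})=\boldsymbol{b}$ by continuity of $\mathcal{A}$ and $\boldsymbol{r}^{(n)}\to0$.

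The main obstacle I anticipate is precisely the reason for using the Graves iteration rather than a single Banach fixed point for the map $\boldsymbol{h}\mapsto R(\boldsymbol{b}-\boldsymbol{b}_0-r(\boldsymbol{h}))$: in a general Banach space a surjective $\Lambda$ need \emph{not} admit a \emph{continuous linear} right inverse (this would require $\ker\Lambda$ to be topologically complemented), so the would--be contraction estimate $\|R(r(\boldsymbol{h}_1))-R(r(\boldsymbol{h}_2))\|=\|R(r(\boldsymbol{h}_1)-r(\boldsymbol{h}_2))\|$ is unavailable. The iterative scheme sidesteps this by applying $R$ only to the current residual and invoking solely the norm bound $\|R\boldsymbol{g}\|\le C_0\|\boldsymbol{g}\|$. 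All remaining points—self--consistency of staying in the ball and geometric convergence—are governed by the single smallness calibration $\varepsilon C_0\le\tfrac12$ together with $\delta=\rho/(2C_0)$. I would also note that uniqueness is neither asserted nor expected here, since $\ker\Lambda\neq\{\mathbf{0}\}$ in general; only existence of a preimage is claimed, which is exactly what the controllability argument in Section~\ref{sec5} requires.
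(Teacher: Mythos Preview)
Your proof is correct. The paper does not prove this theorem at all: it merely states it and remarks that it ``can be viewed as a simple corollary of the generalized implicit function theorem'' in Alekseev--Tikhomirov--Fomin \cite{AT}. Your Lyusternik--Graves argument---open mapping theorem to extract a norm-bounded (not necessarily linear or continuous) right inverse $R$, followed by a Newton iteration with geometric residual decay---is the standard direct route and is fully self-contained; your observation that $R$ need only satisfy $\|R\boldsymbol{g}\|\le C_0\|\boldsymbol{g}\|$, rather than be a continuous linear section, is exactly what makes the scheme work without assuming $\ker\Lambda$ is complemented. In short, you supply a complete proof where the paper outsources to the literature; both yield only existence (no uniqueness), which is all that the application in Section~\ref{sec5} requires.
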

	This theorem can be viewed as a simple corollary of the generalized implicit function theorem, as demonstrated in the work of V. Alekseev, V. Tikhomirov, and S. Fomin \cite{AT}. 
	
	Let us define the operator $\mathcal{A}:\mathcal{E} \to \mathcal{G}$ by
	\begin{align}\label{op}
		\mathcal{A}(\v,\boldsymbol{\vartheta})=\big(\mathcal{L}\v-\mathcal{N}(\v)+\nabla q-\boldsymbol{\vartheta}\mathds{1}_{\omega},\v(0)\big), \ \ \ \text{for all} \ (\v,\boldsymbol{\vartheta}) \in \mathcal{E},
	\end{align}
	where the operators $\mathcal{L}\v$ and $\mathcal{N}(\v)$ are defined in \eqref{a31} and \eqref{a32}, respectively and $\mathcal{E}=\mathcal{E}_{d}$ and $\mathcal{G}=\mathcal{G}_{d}$ for $d=2,3$.\
	
	The next lemma provides the continuity of some multilinear forms which will be required  to apply Theorem \ref{thmnl}.
	\begin{lemma}\label{lemmac}
		Assume that $\wi{\u} \in \mathrm{L}^{\infty}(Q)^d $. Let us consider the bilinear and trilinear forms $\mathcal{N}_{\mathrm{Bi}} : \mathcal{E} \times \mathcal{E} \to \mathcal{W}$ and $\mathcal{N}_{\mathrm{Tri}} : \mathcal{E} \times \mathcal{E} \times \mathcal{E} \to \mathcal{W}$, respectively, given by
		\begin{align}
			\mathcal{N}_{\mathrm{Bi}}(\v_{1},\v_{2})&=  \nabla \cdot(\v_1 \otimes \v_2)+\beta ( \v_1 \cdot \v_2)\wi{\u}+2\beta ( \v_1 \cdot \wi{\u}) \v_2, \label{n1}\\
			\mathcal{N}_{\mathrm{Tri}}(\v_{1},\v_{2},\v_{3})&= \beta ( \v_1 \cdot \v_2) \v_3, \label{n2}
		\end{align}
		where
		\begin{align*}
			\mathcal{W}= \left\{\begin{array}{cc}   \mathrm{L}^2(e^{- s{\psi}}(0,T);\mathrm{H}^{-1}(\Omega)^{2}), &  \ \ \text{ if } d=2,\\
				\mathrm{L}^2(e^{- s{\psi}}(0,T);\mathrm{W}^{-1,6}(\Omega)^{3}), &  \ \text{ if }d=3.\end{array}\right.
		\end{align*}
		Then, there exists $C>0$ such that
		\begin{align}
			\|\mathcal{N}_{\mathrm{Bi}}(\v_{1},\v_{2})\|_{\mathcal{W}} &\leq C \|\v_1\|_{\mathcal{E}}\|\v_2\|_{\mathcal{E}}, \label{n3}\\
			\|\mathcal{N}_{\mathrm{Tri}}(\v_{1},\v_{2},\v_3)\|_{\mathcal{W}}& \leq C \|\v_1\|_{\mathcal{E}}\|\v_2\|_{\mathcal{E}}\|\v_3\|_{\mathcal{E}}.\label{n4}
		\end{align}
	\end{lemma}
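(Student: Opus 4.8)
The plan is to bound the three algebraic pieces of $\mathcal{N}_{\mathrm{Bi}}$ in \eqref{n1} and the single piece of $\mathcal{N}_{\mathrm{Tri}}$ in \eqref{n2} one at a time, first converting the spatial weight $e^{-s\psi}$ into the purely temporal weight $e^{-\frac{3s}{4}\hat{\psi}}$ carried by the $\mathcal{E}_d$-norm, and then closing each estimate with H\"older's inequality in space and time against the mixed-norm ingredients of $\|\cdot\|_{\mathcal{E}_d}$.

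The first step is the weight comparison. Since $\check{\psi}(t)=\min_{x\in\overline{\Omega}}\psi(x,t)\le\psi(x,t)$, we have $e^{-s\psi}\le e^{-s\check{\psi}}$ pointwise, and Lemma \ref{lemmacc} together with $\hat{\psi}\le 0$ gives $\tfrac{9}{4}\hat{\psi}\le\tfrac{3}{2}\hat{\psi}\le\check{\psi}$. Hence $e^{-s\psi}\le e^{-\frac{3s}{2}\hat{\psi}}$ and $e^{-s\psi}\le e^{-\frac{9s}{4}\hat{\psi}}$, which is exactly the room needed to split the weight as a product of two (respectively three) copies of $e^{-\frac{3s}{4}\hat{\psi}}$, attaching one copy to each factor $\v_i$. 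I will also record that the inequality $\tfrac{3}{2}\hat{\psi}\le\check{\psi}$ can be made strict with a gap that blows up like $\ell^{-4}(t)$ as $t\to T$, so that any fixed power of $\hat{\xi}$ is absorbed by the exponential, namely $\hat{\xi}\,e^{-s\psi}\le C e^{-\frac{3s}{2}\hat{\psi}}$.

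For the duality bounds I would use, for a tensor $F$, $\|\nabla\cdot F\|_{\mathrm{H}^{-1}(\Omega)}\le C\|F\|_{\mathrm{L}^2(\Omega)}$ when $d=2$ and $\|\nabla\cdot F\|_{\mathrm{W}^{-1,6}(\Omega)}\le C\|F\|_{\mathrm{L}^6(\Omega)}$ when $d=3$, together with the embeddings $\mathrm{L}^2(\Omega)\hookrightarrow\mathrm{H}^{-1}(\Omega)$ and $\mathrm{L}^2(\Omega)\hookrightarrow\mathrm{W}^{-1,6}(\Omega)$ for the zeroth-order pieces, and $\wi{\u}\in\mathrm{L}^\infty(Q)^d$ to absorb the trajectory factors. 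For the convective piece $\nabla\cdot(\v_1\otimes\v_2)$ the product $\v_1\v_2$ is estimated in space by $\|\v_1\|_{\mathrm{L}^4}\|\v_2\|_{\mathrm{L}^4}$ and in time by the $\mathrm{L}^6(0,T;\mathrm{L}^4)$ ingredient of $\|\cdot\|_{\mathcal{E}_2}$ (via $\mathrm{L}^6\hookrightarrow\mathrm{L}^4$ in time) when $d=2$, and by $\|\v_1\|_{\mathrm{L}^{12}}\|\v_2\|_{\mathrm{L}^{12}}$ in space together with the $\mathrm{L}^4(0,T;\mathrm{L}^{12})$ ingredient of $\|\cdot\|_{\mathcal{E}_3}$ in time when $d=3$. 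The damping pieces $(\v_1\cdot\v_2)\wi{\u}$ and $(\v_1\cdot\wi{\u})\v_2$, and the cubic piece $(\v_1\cdot\v_2)\v_3$, are all placed in $\mathrm{L}^2(\Omega)$ and estimated by products of $\mathrm{L}^4$-norms ($d=2$) or $\mathrm{L}^6$-norms ($d=3$), closed in time by the $\mathrm{L}^6(0,T;\mathrm{L}^4)$ ($d=2$) and $\mathrm{L}^6(Q)^3$ ($d=3$) ingredients; in every case the split weight $e^{-\frac{3s}{4}\hat{\psi}}$ distributes across the factors to reproduce $\|\v_i\|_{\mathcal{E}_d}$, giving \eqref{n3}--\eqref{n4}.

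The main obstacle is the convective term, because the weight $e^{-s\psi}$ sits outside the divergence whereas the estimate I want is on $e^{-s\psi}(\v_1\otimes\v_2)$. I would move the weight through the divergence by duality: testing $e^{-s\psi}\nabla\cdot(\v_1\otimes\v_2)$ against $\phi\in\mathrm{W}^{1,6/5}_0(\Omega)^3$ (respectively $\mathrm{H}^1_0(\Omega)^2$ for $d=2$) and integrating by parts produces the principal term $-\int e^{-s\psi}(\v_1\otimes\v_2):\nabla\phi$, controlled by $\|e^{-s\psi}\v_1\otimes\v_2\|_{\mathrm{L}^6}$ (respectively $\mathrm{L}^2$), plus a commutator term carrying $\nabla e^{-s\psi}=-s(\nabla\psi)e^{-s\psi}$. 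Since $\nabla\psi=\lambda(\nabla\eta)\xi$ satisfies $|\nabla\psi|\le C\lambda\hat{\xi}$, the commutator is the zeroth-order expression $s\lambda\,\hat{\xi}\,e^{-s\psi}(\v_1\otimes\v_2)$; using the strict weight gap recorded above, $s\lambda\hat{\xi}e^{-s\psi}\le C e^{-\frac{3s}{2}\hat{\psi}}$, so it is absorbed into the same product estimate as the principal term. Once this commutator is controlled, all four bounds reduce to the H\"older computations above, which proves \eqref{n3} and \eqref{n4} with a constant $C$ depending on $\|\wi{\u}\|_{\mathrm{L}^\infty(Q)^d}$, $\beta$, $s$, $\lambda$ and $\Omega$.
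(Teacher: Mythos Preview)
Your strategy is the same as the paper's: reduce the spatial weight $e^{-s\psi}$ to the temporal weight $e^{-\frac{3s}{4}\hat\psi}$ via Lemma~\ref{lemmacc}, pair each factor $\v_i$ with one copy of that weight, and close by H\"older against the mixed norms built into $\mathcal{E}_d$ (namely $\mathrm{L}^6(0,T;\mathrm{L}^4)$ for $d=2$ and $\mathrm{L}^4(0,T;\mathrm{L}^{12})\cap\mathrm{L}^6(Q)$ for $d=3$). Your treatment of the convective piece is in fact \emph{more} careful than the paper's: the paper passes directly from $\|e^{-s\psi}\nabla\cdot(\v_1\otimes\v_2)\|_{\mathrm{H}^{-1}}$ (resp.\ $\mathrm{W}^{-1,6}$) to $\|e^{-s\psi}\v_1\otimes\v_2\|_{\mathrm{L}^2}$ (resp.\ $\mathrm{L}^6$) as if the weight were constant in $x$, whereas you correctly isolate and bound the commutator $s(\nabla\psi)e^{-s\psi}(\v_1\otimes\v_2)$. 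Your absorption of the extra factor $\hat\xi$ through the strict gap in $\tfrac{3}{2}\hat\psi\le\check\psi$ is legitimate, since the proof of Lemma~\ref{lemmacc} chooses $m_0,m_3$ with strict inequalities, which for $\lambda$ large yields $\check\psi-\tfrac{3}{2}\hat\psi\ge c\,\hat\xi$ and hence $\hat\xi\,e^{-s\psi}\le C e^{-\frac{3s}{2}\hat\psi}$.

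There is one slip to fix. For the cubic piece $(\v_1\cdot\v_2)\v_3$ in $d=2$ you write that it is ``placed in $\mathrm{L}^2(\Omega)$ and estimated by products of $\mathrm{L}^4$-norms''; but three $\mathrm{L}^4$-factors only give $\mathrm{L}^{4/3}$, not $\mathrm{L}^2$. The correction is immediate and consistent with the rest of your argument: use $\mathrm{L}^{4/3}(\Omega)\hookrightarrow\mathrm{H}^{-1}(\Omega)$ (valid since $d=2$), bound $\|(\v_1\cdot\v_2)\v_3\|_{\mathrm{L}^{4/3}}\le\prod_i\|\v_i\|_{\mathrm{L}^4}$, and then close in time by H\"older with three exponents $3$ to land on $\prod_i\|e^{-\frac{3s}{4}\hat\psi}\v_i\|_{\mathrm{L}^6(0,T;\mathrm{L}^4)}$, exactly the ingredient in $\|\cdot\|_{\mathcal{E}_2}$. (The paper instead routes this term through $\mathrm{L}^{12/11}\hookrightarrow\mathrm{H}^{-1}$ and a Gagliardo--Nirenberg interpolation using $\mathrm{L}^\infty(\mathbb{H})\cap\mathrm{L}^2(\mathbb{V})$; your route is shorter once the $\mathrm{L}^{4/3}$ embedding is used.)
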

	\begin{proof}
		Let us first prove the continuity of the bilinear form $\mathcal{N}_{\mathrm{Bi}}$. In fact,  for $d=2$, we can use $e^{-\frac{ 3s}{4}\hat{\psi}}\v \in \mathrm{L}^4(Q)^2$ for any $(\v,\boldsymbol{\vartheta})\in \mathcal{E}$ and obtain from \eqref{n1} that
		\begin{align*}
			\|\mathcal{N}_{\mathrm{Bi}}(\v_{1},\v_{2})\|_{\mathcal{W}} &\leq  \|\nabla \cdot(\v_1 \otimes \v_2)\|_{\mathrm{L}^2(e^{- s{\psi}}(0,T);\mathrm{H}^{-1}(\Omega)^2)}+ \|\beta ( \v_1 \cdot \v_2)\wi{\u} \|_{\mathrm{L}^2(e^{- s{\psi}}(0,T);\mathrm{H}^{-1}(\Omega)^2)}\\&\quad+ \|2\beta ( \v_1 \cdot \wi{\u}) \v_2\|_{\mathrm{L}^2(e^{- s{\psi}}(0,T);\mathrm{H}^{-1}(\Omega)^2)} \\& \leq C\|\v_1 \otimes \v_2\|_{\mathrm{L}^2(e^{- s{\psi}}(0,T);\H)}+C  \|( \v_1 \cdot \v_2)\wi{\u} \|_{\mathrm{L}^2(e^{- s{\psi}}(0,T);\H)} \\&\quad+ C\|( \v_1 \cdot \wi{\u}) \v_2\|_{\mathrm{L}^2(e^{- s{\psi}}(0,T);\H)}\\&\leq C \|e^{-\frac{ s}{2}{\psi}}\v_1 \|_{\mathrm{L}^4(Q)^2}\|e^{-\frac{ s}{2}{\psi}}\v_2 \|_{\mathrm{L}^4(Q)^2} \\&\quad +
			C \|\wi{\u} \|_{\mathrm{L}^\infty(Q)^2}
			\|e^{-\frac{ s}{2}{\psi}}\v_1 \|_{\mathrm{L}^4(Q)^2}\|e^{-\frac{ s}{2}{\psi}}\v_2 \|_{\mathrm{L}^4(Q)^2}.
		\end{align*}
		From Lemma \ref{lemmacc} (cf. \cite[Lemma 4.1]{Puel}), we also have
		\begin{align}\label{In}
			-\psi \leq - \check{\psi} \leq-\frac{3}{2}\hat{\psi}.
		\end{align}
		Thus, we infer 
		\begin{align*}
			\|\mathcal{N}_{\mathrm{Bi}}(\v_{1},\v_{2})\|_{\mathcal{W}} 
			&\leq C \|e^{-\frac{ 3s}{4}\hat{\psi}}\v_1 \|_{\mathrm{L}^4(Q)^2}\|e^{-\frac{ 3s}{4}\hat{\psi}}\v_2 \|_{\mathrm{L}^4(Q)^2}\\&\quad+	C \|\wi{\u} \|_{\mathrm{L}^\infty(Q)^2}
			\|e^{-\frac{ 3s}{4}\hat{\psi}}\v_1 \|_{\mathrm{L}^4(Q)^2}\|e^{-\frac{ 3s}{4}\hat{\psi}}\v_2 \|_{\mathrm{L}^4(Q)^2},
		\end{align*}
		which leads to \eqref{n3}. 
		
		On the other hand, for $d=3$, we get 
		\begin{align*}
			&	\|\mathcal{N}_{\mathrm{Bi}}(\v_{1},\v_{2})\|_{\mathcal{W}} \\&\leq  \|\nabla \cdot(\v_1 \otimes \v_2)\|_{\mathrm{L}^2(e^{- s{\psi}}(0,T);\mathrm{W}^{-1,6}(\Omega)^3)}+ \big\|\beta ( \v_1 \cdot \v_2)\wi{\u}\big\|_{\mathrm{L}^2(e^{- s{\psi}}(0,T);\mathrm{W}^{-1,6}(\Omega)^3)}\\&\quad+ \big\|2\beta ( \v_1 \cdot \wi{\u})\v_2\big\|_{\mathrm{L}^2(e^{- s{\psi}}(0,T);\mathrm{W}^{-1,6}(\Omega)^3)}\\&\leq  \| \v_1 \otimes \v_2\|_{\mathrm{L}^2(e^{- s{\psi}}(0,T);\mathrm{L}^{6}(\Omega)^3)}+C \big\|(-\Delta)^{-\frac{1}{2}}( \v_1 \cdot \v_2)\wi{\u}\big\|_{\mathrm{L}^2(e^{- s{\psi}}(0,T);\mathrm{L}^{6}(\Omega)^3)}\\&\quad+C \big\|(-\Delta)^{-\frac{1}{2}}( \v_1 \cdot \wi{\u})\v_2\big\|_{\mathrm{L}^2(e^{- s{\psi}}(0,T);\mathrm{L}^{6}(\Omega)^3)} \\& \leq C\|\v_1 \otimes \v_2\|_{\mathrm{L}^2(e^{- s{\psi}}(0,T);\mathrm{L}^{6}(\Omega)^3)}+C  \big\|\nabla(-\Delta)^{-\frac{1}{2}}( \v_1 \cdot \v_2)\wi{\u}\big\|_{\mathrm{L}^2(e^{- s{\psi}}(0,T);\H)}\\&\quad+C  \big\|\nabla(-\Delta)^{-\frac{1}{2}}( \v_1 \cdot \wi{\u})\v_2\big\|_{\mathrm{L}^2(e^{- s{\psi}}(0,T);\H)}
			\\&\leq C \|e^{-\frac{ s}{2}{\psi}}\v_1 \|_{\mathrm{L}^4(0,T;\mathrm{L}^{12}(\Omega)^3)}\|e^{-\frac{ s}{2}{\psi}}\v_2 \|_{\mathrm{L}^4(0,T;\mathrm{L}^{12}(\Omega)^3)} \\&\quad +
			C \|\wi{\u} \|_{\mathrm{L}^\infty(Q)^3}
			\|e^{-\frac{ s}{2}{\psi}}\v_1 \|_{\mathrm{L}^4(Q)^3}\|e^{-\frac{ s}{2}{\psi}}\v_2 \|_{\mathrm{L}^4(Q)^3}
			\\&\leq C \|e^{-\frac{ s}{2}{\psi}}\v_1 \|_{\mathrm{L}^4(0,T;\mathrm{L}^{12}(\Omega)^3)}\|e^{-\frac{ s}{2}{\psi}}\v_2 \|_{\mathrm{L}^4(0,T;\mathrm{L}^{12}(\Omega)^3)} \\&\quad +
			C \|\wi{\u} \|_{\mathrm{L}^\infty(Q)^3}
			\|e^{-\frac{ s}{2}{\psi}}\v_1 \|_{\mathrm{L}^4(0,T;\mathrm{L}^{12}(\Omega)^3)}\|e^{-\frac{ s}{2}{\psi}}\v_2 \|_{\mathrm{L}^4(0,T;\mathrm{L}^{12}(\Omega)^3)}.
		\end{align*}
		By the virtue of the inequality \eqref{In}, we immediately reach at \eqref{n3}. 
		
%		\iffalse
%		On the other hand, for $d=3$, we get 
%		\begin{align*}
%			&	\|\mathcal{N}_{\mathrm{Bi}}(\v_{1},\v_{2})\|_{\mathcal{W}} \\&\leq  \|\nabla \cdot(\v_1 \otimes \v_2)\|_{\mathrm{L}^2(e^{- s{\psi}}(0,T);\mathrm{W}^{-1,6}(\Omega)^3)}+ \big\|\beta ( \v_1 \cdot \v_2)\wi{\u}\big\|_{\mathrm{L}^2(e^{- s{\psi}}(0,T);\mathrm{W}^{-1,6}(\Omega)^3)}\\&\quad+ \big\|2\beta ( \v_1 \cdot \wi{\u})\v_2\big\|_{\mathrm{L}^2(e^{- s{\psi}}(0,T);\mathrm{W}^{-1,6}(\Omega)^3)}\\&\leq  \| \v_1 \otimes \v_2\|_{\mathrm{L}^2(e^{- s{\psi}}(0,T);\mathrm{L}^{6}(\Omega)^3)}+C \big\|( \v_1 \cdot \v_2)\wi{\u}\big\|_{\mathrm{L}^2(e^{- s{\psi}}(0,T);\mathrm{L}^{6}(\Omega)^3)}\\&\quad+C \big\|( \v_1 \cdot \wi{\u})\v_2\big\|_{\mathrm{L}^2(e^{- s{\psi}}(0,T);\mathrm{L}^{6}(\Omega)^3)}  \\&\leq C \|e^{-\frac{ s}{2}{\psi}}\v_1 \|_{\mathrm{L}^4(0,T;\mathrm{L}^{12}(\Omega)^3)}\|e^{-\frac{ s}{2}{\psi}}\v_2 \|_{\mathrm{L}^4(0,T;\mathrm{L}^{12}(\Omega)^3)} \\&\quad +
%			C \|\wi{\u} \|_{\mathrm{L}^\infty(Q)^3}
%			\|e^{-\frac{ s}{2}{\psi}}\v_1 \|_{\mathrm{L}^4(0,T;\mathrm{L}^{12}(\Omega)^3)}\|e^{-\frac{ s}{2}{\psi}}\v_2 \|_{\mathrm{L}^4(0,T;\mathrm{L}^{12}(\Omega)^3)}.
%		\end{align*}
%		By the virtue of inequality \eqref{In}, we immediately reach at \eqref{n3}. 
%		\fi
		Now, we proceed to prove the continuity of the trilinear form $\mathcal{N}_{\mathrm{Tri}}$. For $d=3$, from \eqref{n2}, we deduce that
		\begin{align*}
			&	\|\mathcal{N}_{\mathrm{Tri}}(\v_{1},\v_{2},\v_{3})\|_{\mathcal{W}} \\&=  \|\beta ( \v_1 \cdot \v_2) \v_3\|_{\mathrm{L}^2(e^{- s{\psi}}(0,T);\mathrm{W}^{-1,6}(\Omega)^3)}  \leq C \big\|(-\Delta)^{-\frac{1}{2}} ( \v_1 \cdot \v_2) \v_3\big\|_{\mathrm{L}^2(e^{- s{\psi}}(0,T);\mathrm{L}^{6}(\Omega)^3)}\\&\leq C \big\|\nabla(-\Delta)^{-\frac{1}{2}} ( \v_1 \cdot \v_2) \v_3\big\|_{\mathrm{L}^2(e^{- s{\psi}}(0,T);\H)}\leq C \|e^{- s {\psi}} ( \v_1 \cdot \v_2) \v_3\|_{\mathrm{L}^2(0,T;\H)}  \\&\leq C \|e^{-\frac{ s}{3}{\psi}}\v_1 \|_{\mathrm{L}^6(Q)^3}\|e^{-\frac{ s}{3}{\psi}}\v_2 \|_{\mathrm{L}^6(Q)^3} \|e^{-\frac{ s}{3}{\psi}}\v_3 \|_{\mathrm{L}^6( Q)^3}
			\\&\leq C \|e^{-\frac{ s}{2}\hat{\psi}}\v_1 \|_{\mathrm{L}^6(Q)^3}\|e^{-\frac{ s}{2}\hat{\psi}}\v_2 \|_{\mathrm{L}^6(Q)^3} \|e^{-\frac{ s}{2}\hat{\psi}}\v_3 \|_{\mathrm{L}^6(Q)^3}
			\\&\leq C \|e^{\frac{ s}{4}\hat{\psi}}e^{-\frac{ 3s}{4}{\hat{\psi}}}\v_1 \|_{\mathrm{L}^6(Q)^3}\|e^{\frac{ s}{4}\hat{\psi}}e^{-\frac{ 3s}{4}{\hat{\psi}}}\v_2 \|_{\mathrm{L}^6(Q)^3} \|e^{\frac{ s}{4}\hat{\psi}}e^{-\frac{ 3s}{4}{\hat{\psi}}}\v_3 \|_{\mathrm{L}^6(Q)^3},
		\end{align*}
		and \eqref{n4} follows by using \eqref{In}. Similarly, for $d=2$, we have
		\begin{align*}
			&	\|\mathcal{N}_{\mathrm{Tri}}(\v_{1},\v_{2},\v_{3})\|_{\mathcal{W}}^2 =\int_0^T \|e^{- s{\psi}}\beta ( \v_1 \cdot \v_2) \v_3\|_{\mathrm{H}^{-1}(\Omega)^2}^2\d t \leq C \int_0^T \|e^{- s{\psi}} ( \v_1 \cdot \v_2) \v_3\|_{\mathrm{L}^\frac{12}{11}(\Omega)^2}^2\d t
			\\& \leq C\int_0^T \|e^{-\frac{ s}{3}{\psi}}\v_1\|_{\mathrm{L}^3(\Omega)^2}^2\|e^{-\frac{ s}{3}{\psi}}\v_2\|_{\mathrm{L}^3(\Omega)^2}^2 \|e^{-\frac{ s}{3}{\psi}}\v_3\|_{\mathrm{L}^4(\Omega)^2}^2 \d t
			\\& \leq C \sup_{t\in[0,T]}\left(\|e^{-\frac{ s}{3}{\psi}}\v_1\|_{\H}^\frac{4}{3}\|e^{-\frac{ s}{3}{\psi}}\v_2\|_{\H}^\frac{4}{3}\right)\bigg(\int_0^T \|e^{-\frac{ s}{3}{\psi}}\nabla\v_1\|_{\H}^\frac{2}{3}\|e^{-\frac{ s}{3}{\psi}}\nabla\v_2\|_{\H}^\frac{2}{3} \|e^{-\frac{ s}{3}{\psi}}\v_3\|_{\mathrm{L}^4(\Omega)^2}^2 \d t\bigg)
			\\& \leq C \sup_{t\in[0,T]}\left(\|e^{-\frac{ s}{3}{\psi}}\v_1\|_{\H}^\frac{4}{3}\|e^{-\frac{ s}{3}{\psi}}\v_2\|_{\H}^\frac{4}{3}\right)
			\bigg(\int_0^T \|e^{-\frac{ s}{3}{\psi}}\nabla\v_1\|_{\H}^2 \d t\bigg)^\frac{1}{3}	\\& \quad\times\bigg(\int_0^T \|e^{-\frac{ s}{3}{\psi}}\nabla\v_2\|_{\H}^2 \d t\bigg)^\frac{1}{3}\bigg(\int_0^T \|e^{-\frac{ s}{3}{\psi}}\v_3\|_{\mathrm{L}^4}^6 \d t\bigg)^\frac{1}{3}
			\\&\leq C \sup_{t\in[0,T]}\left(\|e^{-\frac{ s}{2}\hat{\psi}}\v_1\|_{\H}^\frac{4}{3}\|e^{-\frac{ s}{2}\hat{\psi}}\v_2\|_{\H}^\frac{4}{3}\right)
			\|e^{-\frac{ s}{2}\hat{\psi}}\v_1 \|_{\mathrm{L}^2(0,T;\V)}^\frac{2}{3}	\\& \quad\times\|e^{-\frac{ s}{2}\hat{\psi}}\v_2 \|_{\mathrm{L}^2(0,T;\V)}^\frac{2}{3} \|e^{-\frac{ s}{2}\hat{\psi}}\v_3 \|_{\mathrm{L}^6( 0,T;\mathrm{L}^{4}(\Omega)^2)}^2
			\\&\leq C \sup_{t\in[0,T]}\left(\|e^{\frac{ s}{4}\hat{\psi}}e^{-\frac{ 3s}{4}{\hat{\psi}}}\v_1\|_{\H}^\frac{4}{3}\|e^{\frac{ s}{4}\hat{\psi}}e^{-\frac{ 3s}{4}{\hat{\psi}}}\v_2\|_{\H}^\frac{4}{3}\right)
			\|e^{\frac{ s}{4}\hat{\psi}}e^{-\frac{ 3s}{4}{\hat{\psi}}}\v_1 \|_{\mathrm{L}^2(0,T;\V)}^\frac{2}{3}	\\& \quad\times\|e^{\frac{ s}{4}\hat{\psi}}e^{-\frac{ 3s}{4}{\hat{\psi}}}\v_2 \|_{\mathrm{L}^2(0,T;\V)}^\frac{2}{3} \|e^{\frac{ s}{4}\hat{\psi}}e^{-\frac{ 3s}{4}{\hat{\psi}}}\v_3 \|_{\mathrm{L}^6( 0,T;\mathrm{L}^{4}(\Omega)^2)}^2,
		\end{align*}
		where we have used the embedding $\mathrm{H}^1(\Omega)^2 \hookrightarrow \mathrm{L}^{12}_{\sigma}(\Omega)^2 \hookrightarrow \mathrm{L}^\frac{12}{11}_{\sigma}(\Omega)^2	\hookrightarrow \mathrm{H}^{-1}(\Omega)^2$, Gagliardo-Nirenberg's and H\"older's inequalities. Using \eqref{In}, we deduce \eqref{n4}, and the proof of Lemma \ref{lemmac} is completed.
	\end{proof}
	\begin{lemma}\label{lemmac1}
		Let us assume that $\wi{\u} \in \mathrm{L}^\infty(Q)^d$. Then, $\mathcal{A} \in \C^1(\mathcal{E};\mathcal{G})$.
	\end{lemma}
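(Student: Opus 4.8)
The plan is to reduce the statement to the elementary fact that a continuous multilinear map between Banach spaces induces a $\C^\infty$ (in particular $\C^1$) map on its diagonal, and then to feed in the continuity estimates \eqref{n3}--\eqref{n4} already proved in Lemma \ref{lemmac}. First I would record the algebraic structure of the nonlinearity: comparing \eqref{a32} with the definitions \eqref{n1} and \eqref{n2}, one checks directly that
\[
-\mathcal{N}(\v)=\mathcal{N}_{\mathrm{Bi}}(\v,\v)+\mathcal{N}_{\mathrm{Tri}}(\v,\v,\v).
\]
Hence, by \eqref{op}, the operator splits as $\mathcal{A}=\mathcal{A}_{\mathrm{lin}}+\mathcal{A}_{\mathrm{Bi}}+\mathcal{A}_{\mathrm{Tri}}$, where
\[
\mathcal{A}_{\mathrm{lin}}(\v,\boldsymbol{\vartheta})=\big(\mathcal{L}\v+\nabla q-\boldsymbol{\vartheta}\mathds{1}_{\omega},\v(0)\big),\qquad
\mathcal{A}_{\mathrm{Bi}}(\v,\boldsymbol{\vartheta})=\big(\mathcal{N}_{\mathrm{Bi}}(\v,\v),\mathbf{0}\big),\qquad
\mathcal{A}_{\mathrm{Tri}}(\v,\boldsymbol{\vartheta})=\big(\mathcal{N}_{\mathrm{Tri}}(\v,\v,\v),\mathbf{0}\big).
\]

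The linear part $\mathcal{A}_{\mathrm{lin}}$ is a bounded linear operator from $\mathcal{E}$ into $\mathcal{G}$, which is essentially built into the definitions of the weighted spaces: the quantity $\mathcal{L}\v+\nabla q-\boldsymbol{\vartheta}\mathds{1}_{\omega}=\nabla\cdot\bar{\g}+\bar{\h}$ lies in $\mathcal{W}$ with norm controlled by $\|(\v,\boldsymbol{\vartheta})\|_{\mathcal{E}}$, and the trace $\v\mapsto\v(0)$ is continuous into $\H$ (resp. $\H\cap\mathrm{L}^4(\Omega)^3$ when $d=3$). A bounded linear operator is of class $\C^\infty$ and coincides with its own differential, so $\mathcal{A}_{\mathrm{lin}}$ contributes no difficulty.

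For the polynomial parts I would run the standard polarization argument. By bilinearity of $\mathcal{N}_{\mathrm{Bi}}$,
\[
\mathcal{N}_{\mathrm{Bi}}(\v+\w,\v+\w)-\mathcal{N}_{\mathrm{Bi}}(\v,\v)=\mathcal{N}_{\mathrm{Bi}}(\v,\w)+\mathcal{N}_{\mathrm{Bi}}(\w,\v)+\mathcal{N}_{\mathrm{Bi}}(\w,\w),
\]
and estimate \eqref{n3} gives $\|\mathcal{N}_{\mathrm{Bi}}(\w,\w)\|_{\mathcal{W}}\le C\|\w\|_{\mathcal{E}}^2=o(\|\w\|_{\mathcal{E}})$, so $\mathcal{A}_{\mathrm{Bi}}$ is Fréchet differentiable with differential $(\w,\boldsymbol{\kappa})\mapsto\big(\mathcal{N}_{\mathrm{Bi}}(\v,\w)+\mathcal{N}_{\mathrm{Bi}}(\w,\v),\mathbf{0}\big)$; continuity of $(\v,\boldsymbol{\vartheta})\mapsto\mathcal{A}_{\mathrm{Bi}}'(\v,\boldsymbol{\vartheta})$ then follows from \eqref{n3} once more, since $\|\mathcal{N}_{\mathrm{Bi}}(\v_1-\v_2,\cdot)\|+\|\mathcal{N}_{\mathrm{Bi}}(\cdot,\v_1-\v_2)\|\le C\|\v_1-\v_2\|_{\mathcal{E}}$. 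The identical computation for the cubic term, expanding $\mathcal{N}_{\mathrm{Tri}}(\v+\w,\v+\w,\v+\w)$ and using \eqref{n4} to control the quadratic-and-higher remainders by $O(\|\w\|_{\mathcal{E}}^2)=o(\|\w\|_{\mathcal{E}})$, shows that $\mathcal{A}_{\mathrm{Tri}}\in\C^1$, with the symmetrized differential $(\w,\boldsymbol{\kappa})\mapsto\big(\mathcal{N}_{\mathrm{Tri}}(\w,\v,\v)+\mathcal{N}_{\mathrm{Tri}}(\v,\w,\v)+\mathcal{N}_{\mathrm{Tri}}(\v,\v,\w),\mathbf{0}\big)$.

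Finally I would conclude that $\mathcal{A}$, being a finite sum of $\C^1$ (indeed $\C^\infty$) maps, is itself of class $\C^1$ on $\mathcal{E}$, which is exactly the claim; assembling the three differentials also yields an explicit formula for $\mathcal{A}'(\v,\boldsymbol{\vartheta})$ that will be used when verifying the surjectivity hypothesis of Theorem \ref{thmnl}. The main analytic obstacle—the weighted $\mathcal{W}$-bounds that make $\mathcal{N}_{\mathrm{Bi}}$ and $\mathcal{N}_{\mathrm{Tri}}$ continuous multilinear forms—has already been surmounted in Lemma \ref{lemmac}, so the only points needing genuine care here are the bookkeeping of the decomposition and the confirmation that $\mathcal{A}_{\mathrm{lin}}$ really maps $\mathcal{E}$ boundedly into $\mathcal{G}$ with the correct target components, both of which are immediate from the construction of the spaces.
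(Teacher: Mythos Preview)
Your proposal is correct and follows essentially the same approach as the paper: decompose $\mathcal{A}$ into its linear part plus the diagonal of the continuous bilinear and trilinear forms from Lemma \ref{lemmac}, and invoke the standard fact that continuous multilinear maps are $\C^\infty$ on their diagonal. Your write-up is actually more explicit than the paper's (you spell out the polarization and the Fr\'echet remainder estimates), and you correctly identify the sign as $-\mathcal{N}(\v)=\mathcal{N}_{\mathrm{Bi}}(\v,\v)+\mathcal{N}_{\mathrm{Tri}}(\v,\v,\v)$, which the paper states with the opposite sign but which is immaterial for the $\C^1$ conclusion.
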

	\begin{proof}
		From \eqref{n1} and \eqref{n2}, we can see that the operator $\mathcal{N}(\v)$ defined in \eqref{a32} can be written as $$\mathcal{N}(\v)=	\mathcal{N}_{\mathrm{Bi}}(\v,\v)+	\mathcal{N}_{\mathrm{Tri}}(\v,\v,\v).$$ Thus, \eqref{n3} and \eqref{n4} imply that $$\| \mathcal{N} (\v) \|_{\mathcal{G}} \leq  C(\|\v\|_{\mathcal{E}}^2+\|\v\|_{\mathcal{E}}^3).$$ Therefore, we infer that $\mathcal{A}:\mathcal{E} \to \mathcal{G}$ defined in \eqref{op} is well-defined. 
		
		The first component of the operator $\mathcal{A}(\v,\boldsymbol{\vartheta})$ is linear (and consequently $\C^1$), except $\mathcal{N} (\v)$. However, the bilinear and trilinear forms given by \eqref{n1} and \eqref{n2}, respectively, are continuous (cf. Lemma \ref{lemmac}) and consequently, we deduce that $\mathcal{A}$ is a $\C^1$ map from $\mathcal{E}$ to $\mathcal{G}$, which proves the lemma.
	\end{proof}
	As a consequence of this lemma, we can prove our main result (Theorem \ref{thmm}). For the proof, we follow the strategy introduced in \cite{CGIP,Im1,Puel}, etc.
	\begin{proof}[Proof of Theorem \ref{thmm}]
		Let us take $\boldsymbol{e}_0=(\mathbf{0},\mathbf{0}) \in \mathcal{E}$ and $\boldsymbol{b}_0=(\mathbf{0},\mathbf{0}) \in \mathcal{G}$. It follows that $\mathcal{A}'(\boldsymbol{e}_{0}):\mathcal{E}\to \mathcal{G}$ is given by  
		\begin{align*}\mathcal{A}'(\boldsymbol{e}_{0})[\v_{*}, \boldsymbol{\vartheta}_{*}]=\big(\mathcal{L}\v_{*}+\nabla q_{*}-\boldsymbol{\vartheta}_{*}\mathds{1}_{\omega},\v_{*}(0)\big), \ \ \ \text{for all} \ (\v_{*}, \boldsymbol{\vartheta}_{*}) \in \mathcal{E}, 
		\end{align*}
		and $\mathcal{A}'(\boldsymbol{e}_{0})$ is a surjective linear map from $\mathcal{E}$ to $\mathcal{G}$, as implied by the null controllability result for the linearized system \eqref{cv3} (see also Remark \ref{remrg}), or equivalently, $\mathrm{Im}(\mathcal{A}'(\boldsymbol{e}_{0}))=\mathcal{G}$.  An application of the inverse mapping theorem (Theorem \ref{thmnl}) gives exactly the conclusions of  Theorem \ref{thmm} if we take $\boldsymbol{b}=(\mathbf{0},\v_0) \in \mathcal{G}$, that is, it gives the existence of $\delta>0$ such that, if $$\|\v_0\|_{\mathrm{L}^{2d-2}(\Omega)^d} \leq \delta,$$ then we can find a control $\boldsymbol{\vartheta}$ such that the associated solution to \eqref{a3} satisfies $\big(\v,\boldsymbol{\vartheta} \big)\in\mathcal{E}$ and 
		\begin{align*}
			\v(\cdot,T)=\mathbf{0}, \ \ \  \text{in}  \ \Omega,
		\end{align*}
		and	we reach at the desired result.	Thus the proof of Theorem \ref{thmm} is completed.
	\end{proof}
	\begin{remark}
		We could have taken in the equation for $\wi{\u}$ an external force $\wi{\f}$ different from $\f$  (cf. \eqref{a2}).  We can set $$\f=\wi{\f}+\f^{*},$$
		where $\wi{\xi}^{-\frac{3}{2}}e^{- s\wi{\psi}} \wi{\f}$ is sufficiently  small and finite, and obtain the local exact controllability to trajectories of the CBF equations \eqref{a1}.
	\end{remark}
	\medskip\noindent
	{\bf Acknowledgments:} P. Kumar and M. T. Mohan would  like to thank the Department of Science and Technology (DST), India for Innovation in Science Pursuit for Inspired Research (INSPIRE) Faculty Award (IFA17-MA110).
	
	\iffalse
	\medskip\noindent
{\bf Acknowledgments:} The first author would like to thank Indian Institute of Technology Roorkee, India for financial assistance. M. T. Mohan would like to thank the Department of Science and Technology (DST), India for Innovation in Science Pursuit for Inspired Research (INSPIRE) Faculty Award (IFA17-MA110). 
\fi
	\iffalse
	The authors sincerely would like to thank the reviewers for their valuable comments and suggestions which helped us  to improve the manuscript significantly.
	\fi

\end{document}